\pgfplotsset{compat=newest}
\newtheorem{theorem}{Theorem} 
\newtheorem{lemma}[theorem]{Lemma} 
\newtheorem{remark}[theorem]{Remark}
\newtheorem{hypo}[theorem]{Hypotheses}
\newsavebox{\measurebox}
\newcommand{\dt}{\delta t}
\newcommand{\Z}{{\mathbb Z}} 
\newcommand{\R}{{\mathbb R}} 
\newcommand{\C}{{\mathbb C}} 
\newcommand{\N}{{\mathbb N}} 
\newcommand{\T}{{\mathbb T}} 
\newcommand{\dd}{{\rm d}}
\title[Energy preserving methods for NLSE]
{Energy preserving methods for nonlinear Schr\"odinger equations}
\author[C. Besse]{Christophe Besse}
\address[C. Besse]{Institut de Math\'ematiques de Toulouse, UMR5219, Universit\'e de Toulouse, CNRS UPS IMT, F-31062 Toulouse Cedex
9\\ France} 
\email{christophe.besse@math.univ-toulouse.fr}
\author[S. Descombes]{St\'ephane Descombes}
\address[S. Descombes]{Universit\'e C\^ote d'Azur, CNRS, INRIA, LJAD,  France}
\email{stephane.descombes@univ-cotedazur.fr}
\author[G. Dujardin]{Guillaume Dujardin}
\address[G. Dujardin]{
        Inria, Univ. Lille, CNRS, UMR 8524, Laboratoire Paul Painlev\'e, F-59000 Lille, France}
\email{guillaume.dujardin@inria.fr}
\author[I. Lacroix-Violet]{Ingrid Lacroix-Violet}
\address[I. Lacroix-Violet]{Laboratoire Paul Painlev\'e, CNRS UMR 8524, INRIA RAPSODI Team,
  	Universit\'e de Lille 1,
  	Cit\'e Scientifique, 59655 Villeneuve d'Ascq Cedex, France}
\email{Ingrid.Violet@math.univ-lille1.fr}
\begin{document}


\begin{abstract}
This paper is concerned with the numerical integration in time of nonlinear Schr\"odinger equations using different methods preserving the energy or a discrete analog of it.
The Crank-Nicolson method is a well known method of order $2$ but is fully implicit  and one may prefer a linearly implicit method like the relaxation method introduced in \cite{BessePhD} for the cubic nonlinear Schr\"odinger equation. This method is also an energy preserving method and numerical simulations have shown that its order is $2$. In this paper we give a rigorous proof of the order of this relaxation method and propose a generalized version that allows to deal with general power law nonlinearites. Numerical simulations for different physical models show the efficiency of these methods.
\end{abstract}


\maketitle
{\small\noindent 
{\bf AMS Classification: 35Q41, 81Q05, 65M70} 

\bigskip\noindent{\bf Keywords.} Nonlinear Schr\"odinger equation, Gross-Pitaevskii equation, numerical methods, relaxation methods.

\section{Introduction}
The nonlinear Schr\"odinger equation (NLSE) is a fairly general dispersive
partial differential equation arising in many areas of physics and
chemistry \cite{ADP,AS,DP,PitaevskiiStringari,SulemSulem}. 
One of the most important application of the NLSE is for laser beam
propagation in nonlinear and/or quantum optics and  there it is also
known as parabolic/paraxial approximation of the Helmholtz or
time-independent Maxwell equations \cite{ADP,AS,DP,SulemSulem}. 
In the context of the modeling of the Bose-Einstein condensation (BEC),
the nonlinear Schr\"odinger equation is known as the Gross-Pitaevskii
equation (GPE), which is a widespread model that 
describes the averaged dynamics of the condensate
\cite{PitaevskiiStringari,XavRom2015}. 
Depending on the physical situation that one considers,
several terms in the right-hand side of the NLSE appear.
Our goal is to develop numerical methods for the time integration
of a fairly general NLSE including realistic physical situations,
that have high-order in time and have good qualitative properties
(preservation of mass, energy, etc) over finite times.
We develop our analysis in spatial dimension $d\in\{1,2,3\}$
because it fits the physical framework, even if most methods and results
naturally extend to higher dimensions.
The space variable $x$ will sometimes lie in $\R^d$,
sometimes on the $d$-dimensional torus $\T_\delta^d=(\R/(\delta\Z))^d$
(for some $\delta>0$).
In this paper, we consider a NLSE of the form
\begin{equation}
\label{eq:GPE}
  i\partial_t \varphi (t,x) = \left(-\frac{1}{2}\Delta
+ V(x)
+ \beta |\varphi|^{2\sigma}(t,x)
+ \lambda \left(U \ast |\varphi(t,\cdot)|^2\right)(x)
-\Omega . R \right)
\varphi(t,x),
\end{equation}
where
$\varphi$ is an unknown function from $\R\times\R^d$ or $\R\times\T_\delta^d$
to $\C$, $\Delta$ is the Laplace operator, $V$ is some real-valued potential
function, $\beta\in\R$ is a parameter that measures the local nonlinearity
strength, $\lambda\in\R$ is a parameter that measures the nonlocal
nonlinearity strength with convolution kernel $U$,
$\Omega\in\R^d$ is a vector encoding the direction and the speed of a rotation,
and $R$ is a rotation operator that is important in the modeling
of rotating BEC (for example, $R= x \wedge (-i\nabla)$ when $d=3$).
The NLSE is supplemented with an initial datum $\varphi_{\text{in}}$.
The results presented in this paper extend to more general power law nonlinearities such as
$$
  i\partial_t \varphi (t,x) = \left(-\frac{1}{2}\Delta
+ V(x)
+ \sum_{k=1}^K\beta_k |\varphi|^{2\sigma_k}(t,x)
+ \lambda \left(U \ast |\varphi(t,\cdot)|^2\right)(x)
-\Omega . R \right)
\varphi(t,x),
$$
but we restrict ourselves to $K=1$ for the sake of simplicity.
Equation \eqref{eq:GPE} is hamiltonian for the energy functional
\begin{equation}
\label{eq:NRJGPE}
  E(\varphi) =  \int_{\R^d} \left( \frac{1}{4} 
\|\nabla \varphi\|^2 
 + \frac{1}{2} V |\varphi|^2 
 + \frac{\beta}{2\sigma+2} |\varphi|^{2\sigma+2}
  + \frac{\lambda}{4} (U \ast |\varphi|^2) |\varphi|^2
  - \frac{\Omega}{2} \overline{\varphi} R \varphi \right) dx,
\end{equation}
provided $U$ is a real-valued convolution kernel, symmetric with respect to the origin \footnote{For real-valued functions, symmetry with respect to the origin is equivalent to real-valued Fourier transform.}. 
In practice the convolution kernel $U$ may for example correspond to a
Poisson equation ($U(x)=1/(4\pi|x|)$ in dimension $d=3$) or it may
represent dipole-dipole interactions (see \cite{becrd}). 

The main goal of this paper is the analysis of numerical methods for the time integration of \eqref{eq:GPE} that preserve the energy \eqref{eq:NRJGPE} or a discretized analogue of it. In particular, we are interested in the order (in time) of such methods. A well known method is the Crank-Nicolson method introduced in \cite{CN47} for parabolic problems (see for example a posteriori error estimates in \cite{Akrivis2006})
and applied in \cite{DFP81} to Schr\"o\-din\-ger equations.
For all nonlinearities, these methods are fully implicit.
However, they have second order in time (see \cite{SS1984} for the case of the
cubic NLS equation and \cite{WANG2013670} for the case of a system with
possibly fractional derivatives) and preserve discrete analogues
of the energy \eqref{eq:NRJGPE} as well as the total mass (squared $L^2$-norm)
of the solutions. Unfortunately, since they are fully implicit, these
methods are costly. To work around this problem, the methods introduced and
analyzed in this paper belong to the family of relaxation methods.

Relaxation methods for Schr\"odinger equations were introduced in
\cite{BessePhD, Besse2004}. They have been applied to different NLS equation for example in the context of plasma physics \cite{Trabelsi2014}. 
For cubic nonlinearities ($\sigma=1$ in \eqref{eq:GPE}),
they are linearly implicit hence very popular \cite{AnBeKl2011,DaOw09,XavRom2015,Ga14,HeWa18}.
They preserve the $L^2$-norm and a discrete analogue of
\eqref{eq:NRJGPE}.
It is well-known that they have numerical order 2 but up to our knowledge
there is no proof of order 2 in the literature.
This paper presents two new results with respect to relaxation methods
for \eqref{eq:GPE}.
First, we prove rigorously that the classical relaxation method,
applied to the classical cubic NLS equation
({\it i.e.} \eqref{eq:GPE} with $V \equiv 0$, $\beta=1$, $\sigma=1$,
$\lambda=0$ and $\Omega=0$) is of order 2.
Second, we present a generalized relaxation method that allows to deal
with general power law nonlinearities ($\sigma \neq 1$ in \eqref{eq:GPE}) and with the full GPE.
The generalized relaxation methods that we introduce in this context
are implicit (actually explicit for $\sigma \leq 4$), have numerical order 2 and we show that they preserve
an energy which is also a discretized analogue of \eqref{eq:NRJGPE}. 

%
%
%

This paper is organized as follows. In Section \ref{sec:pres} we recall the definition of the Crank-Nicolson method and give a short explanation of its energy preserving property.
Section \ref{sec:relax} is devoted to relaxation methods applied to \eqref{eq:GPE}: In a first part we recall the method introduced in \cite{BessePhD} for the cubic Schr\"odinger equation and in a second part we give a proof of the optimal order of convergence  for an initial datum belonging to $H^{s+4}(\R^d)$, $d$ in $\{1,2,3\}$ and $s>d/2$. In section \ref{sec:genrelax} we propose a generalized relaxation method that allows to deal with general nonlinearites and we prove that this method is also an energy preserving method. Section \ref{sec:num} deals with numerical results in different physical models showing the efficiency of the methods.

\section{Preservation of energy and Crank-Nicolson scheme \label{sec:pres}}
A usual way to prove the conservation of the energy \eqref{eq:NRJGPE}
consists in multiplying the equation \eqref{eq:GPE} by
$\overline{\partial_t \varphi}(t,x)$,
where $\bar{z}$ denotes the conjugate value of a complex $z$,
integrating over $\R^d$ and taking the real part of the result.
This computation relies on the identity which holds for all smooth
functions $\varphi$ of time with values into a space of sufficiently integrable
functions
\begin{equation}
  \label{eq:id_ener}
  \mathrm{Re} \int_{\R^d} |\varphi|^{2\sigma}\varphi \overline{\partial_t \varphi}\, dx = \frac{1}{2\sigma+2} \frac{d}{dt}  \int_{\R^d} |\varphi|^{2\sigma+2}\, dx,\quad \forall \sigma\geq 0.
\end{equation}
A possible way to derive numerical schemes that preserve an energy
functional is therefore to mimic this identity at the discrete level.

In 1981, Delfour, Fortin and Payre \cite{DFP81},
following an idea of Strauss and Vasquez \cite{StrVas78},
proposed a way to deal with the nonlinear term $|\varphi|^{2\sigma}\varphi$
for the Crank-Nicolson scheme.
This method generalizes the second order mid-point scheme for the
linear Schr\"odinger equation
\[
  i\frac{\varphi_{n+1}-\varphi_n}{\delta t}=-\frac{1}{2}\Delta \frac{\varphi_{n+1}+\varphi_n}{2},\]
where $\varphi_n(x)$ denotes an approximation of $\varphi(t_n,x)$
with the discrete time $t_n=n\delta t $ defined with the time step $\delta t$.
Their approach can be explained as follows.
If one looks for a real-valued function $g:\C^2\rightarrow \R$ such that
the scheme takes the form
\begin{equation}
  \label{eq:methCNoupas}
i\frac{\varphi_{n+1}-\varphi_n}{\dt}
 = \left( -\frac{1}{2} \Delta 
+ V 
+ {\beta}g(\varphi_n,\varphi_{n+1})
+ \lambda \left(U\ast\left(\frac{|\varphi_{n+1}|^2+|\varphi_n|^2}{2}\right)\right)
- \Omega.R
\right)\frac{\varphi_{n+1}+\varphi_n}{2},
\end{equation}
then, multiplying this relation by $i\overline{\varphi_{n+1}-\varphi_n}$, integrating overs $\R^d$ 
and taking the real part, as we did in the time-continuous setting above,
yields to $0$ in the left-hand side and several terms in the right-hand side.
Amongst these terms, those involving $g$ are equal to
\begin{equation*}
  \beta \int_{\R^d} g(\varphi_n,\varphi_{n+1})\left(|\varphi_{n+1}|^2-|\varphi_n|^2\right) dx,
\end{equation*}
since $g$ is real-valued.
Let us denote by $G$ the function $v\mapsto |v|^{2\sigma+2}/(2\sigma+2)$.
A sufficient condition for the method \eqref{eq:methCNoupas}
to preserve an energy of the form \eqref{eq:NRJGPE} is therefore to have
\begin{equation*}
  g(\varphi_n,\varphi_{n+1})\left(|\varphi_{n+1}|^2-|\varphi_n|^2\right)
    =G(\varphi_{n+1})-G(\varphi_n).
\end{equation*}
This is exactly the definition of $g$ chosen in \cite{DFP81}.


In the following, the Crank-Nicolson method for the GPE \eqref{eq:GPE}
is therefore defined using the formula
\begin{eqnarray}
\lefteqn{i\frac{\varphi_{n+1}-\varphi_n}{\dt}} & \label{eq:CNmethod}\\
 = &\displaystyle \left( -\frac{1}{2} \Delta 
+ V 
+ \frac{\beta}{\sigma+1}
 \frac{|\varphi_{n+1}|^{2\sigma+2} - |\varphi_n|^{2\sigma+2}}{
|\varphi_{n+1}|^2-|\varphi_n|^2}
+ \lambda \left(U\ast\left(\frac{|\varphi_{n+1}|^2+|\varphi_n|^2}{2}\right)\right)
- \Omega.R
\right)\frac{\varphi_{n+1}+\varphi_n}{2}. \nonumber
\end{eqnarray}
We shall use the notation
\begin{equation*}
  \varphi_{n+1} = \Phi_{\dt}^{\rm CN} (\varphi_n),
\end{equation*}
for the Crank-Nicolson method \eqref{eq:CNmethod}.
In the expression above, the term corresponding to the nonlinearity
should be understood as
\begin{equation}
  \label{eq:jesaispas}
  \frac{\beta}{\sigma+1}\frac{|\varphi_{n+1}|^{2\sigma+2} - |\varphi_n|^{2\sigma+2}}{
|\varphi_{n+1}|^2-|\varphi_n|^2} =
\frac{\beta}{\sigma+1}
\sum_{k=0}^\sigma
|\varphi_{n+1}|^{2k} |\varphi_n|^{2(\sigma-k)},
\end{equation}
so that it is indeed non-singular and it is consistent with the non-linear
term $\beta |\varphi|^{2\sigma}$.
The Crank-Nicolson method is fully implicit.
It is known to have order two for the cubic NLS equation \cite{SS1984}.
Moreover it preserves exactly the $L^{2}$-norm of the solution
as well as the following energy:
\begin{equation}
\label{eq:NRJCN}
  E_{\rm CN}(\varphi)  = E(\varphi),
\end{equation}
with $E$ defined by \eqref{eq:NRJGPE}.

In Section \ref{sec:genrelax}, we shall use similar ideas to derive
energy-preserving relaxation methods for general power laws nonlinearities.
Before doing so, we first deal with the classical relaxation method
in Section \ref{sec:relax}.

\section{The classical relaxation method}
\label{sec:relax}
\subsection{An energy preserving method}
In \cite{Besse2004}, Besse introduced the usual relaxation method
\eqref{eq:relaxclassic} 
applied to the nonlinear Schr\"odinger equation
\eqref{eq:GPE} with $V=0$, $\lambda=0$, $\Omega=0$ and $\sigma=1$ that
is known as the cubic nonlinear Schr\"odigner equation
\begin{equation}
  \label{eq:cubicnls}
  i\partial_t\varphi(t,x) = -\frac{1}{2} \Delta \varphi(t,x)
+ \beta |\varphi(t,x)|^{2} \varphi(t,x),
\end{equation}
with $\varphi(0,x)=\varphi_{\text{in}}(x)$. The idea of the relaxation method is
to add to \eqref{eq:cubicnls} a new unknown $\Upsilon=|\varphi|^2$ and
the equation \eqref{eq:cubicnls} is transformed in
\begin{equation}
  \label{eq:relax_cont}
  \left \{
\begin{array}{l}
  \Upsilon = |\varphi(t,x)|^{2},\\
\displaystyle   i\partial_t\varphi(t,x) = -\frac{1}{2} \Delta \varphi(t,x)+ \beta \Upsilon \varphi(t,x).
\end{array}\right .
\end{equation}
The relaxation method then consists in discretizing both
equations respectively at discrete times $t_n$ and $t_{n+1/2}$ and to
solve iteratively 
\begin{equation}
\label{eq:relaxclassic}
  \left\{
    \begin{array}{l}
      \displaystyle\frac{\Upsilon_{n+1/2}+\Upsilon_{n-1/2}}{2} = |\varphi_n|^2,\\
      \displaystyle i\frac{\varphi_{n+1}-\varphi_n}{\dt}
 = \left( -\frac{1}{2} \Delta 
+ \beta \Upsilon_{n+1/2} 
\right)\frac{\varphi_{n+1}+\varphi_n}{2},
    \end{array}
  \right.
\end{equation}
to compute approximations $\varphi_n$ of $\varphi(n\dt)$. This system
is usually initialized with $\Upsilon_{-1/2}=|\varphi(-\dt/2)|^2$ or
by second order approximation of $|\varphi(-\dt/2)|^2$. This method is
linearly implicit (recall that $\sigma=1$). Moreover, 
it is known to preserve exactly the $L^2$-norm and the
discrete energy \cite{Besse2004}:
\begin{equation}
  \label{eq:ener_for_relax}
  E_{\mathrm{rlx}}(\varphi,\Upsilon) = \frac{1}{4} \int_{\R^d} \|\nabla \varphi\|^2 dx
+ \frac{\beta}{2} \int_{\R^d} \Upsilon |\varphi|^2  dx
- \frac{\beta}{4}  \int_{\R^d} \Upsilon^2 dx.
\end{equation}
Indeed
\[  \displaystyle \mathrm{Re}\left ( \Upsilon_{n+1/2}
  \frac{\varphi_{n+1}+\varphi_n}{2}\overline{\varphi_{n+1}-\varphi_n}\right )=\frac{\Upsilon_{n+1/2}}{2}\Big(|\varphi_{n+1}|^2-|\varphi_{n}|^2\Big).\]
But
\[
\begin{array}{ll}
  \displaystyle \Upsilon_{n+1/2}\Big(|\varphi_{n+1}|^2-|\varphi_{n}|^2\Big)& \displaystyle =\Upsilon_{n+1/2}\Big(|\varphi_{n+1}|^2-|\varphi_{n}|^2\Big)+\Upsilon_{n-1/2}\Big(|\varphi_{n}|^2-|\varphi_{n}|^2\Big) \\
  & = \displaystyle \Big(\Upsilon_{n+1/2}|\varphi_{n+1}|^2-\Upsilon_{n-1/2}|\varphi_{n}|^2\Big)-{\Big(\Upsilon_{n+1/2} -\Upsilon_{n-1/2}\Big)|\varphi_{n}|^2}.
\end{array}
\]
Using the definition of $\Upsilon_{\cdot+1/2}$ in \eqref{eq:relaxclassic}, a simple computation
leads to
\[
\displaystyle {\Big(\Upsilon_{n+1/2}
  -\Upsilon_{n-1/2}\Big)|\varphi_{n}|^2} =
\frac{\Big(\Upsilon_{n+1/2}\Big)^2-\Big(\Upsilon_{n-1/2}\Big)^2}{2}.
\]
We therefore conclude that
\[
  \mathrm{Re}\left ( \Upsilon_{n+1/2}
  \frac{\varphi_{n+1}+\varphi_n}{2}\overline{\varphi_{n+1}-\varphi_n}\right )=\Big(\Upsilon_{n+1/2}|\varphi_{n+1}|^2-\Upsilon_{n-1/2}|\varphi_{n}|^2\Big)+\frac{\Big(\Upsilon_{n+1/2}\Big)^2-\Big(\Upsilon_{n-1/2}\Big)^2}{2},
\]
which allows to prove the conservation of the discrete energy
\eqref{eq:ener_for_relax}.

It is interesting to note the consistency of the energy associated to
relaxation scheme with the energy \eqref{eq:NRJGPE} for cubic
nonlinear Schr\"odinger equation
\[
E_{\mathrm{rlx}}(\varphi,|\varphi|^2)=E(\varphi).
\]
The relaxation method was proved to converge in \cite{Besse2004} but
consistency analysis was missing. We present it in the next subsection.
\subsection{Consistency analysis for NLS equation with cubic nonlinearity}

The aim of this subsection is to prove that this method has temporal order
2 under fairly general assumptions. This fact is supported by numerical
evidences in the literature for years. We provide the first rigorous proof
below.


The first equation in \eqref{eq:relaxclassic} is the discrete equivalent of
the continuous constraint $\Upsilon=|\varphi|^2$.
In particular, this constraint is {\it not} an evolution equation.
Therefore, we use the ideas introduced in \cite{Besse2004} and rewrite the
continuous equation \eqref{eq:GPE} (recall that $V=0$, $\lambda=0$ and
$\Omega=0$ and $\sigma=1$) as the system
\begin{equation}
  \label{eq:equivrelax}
  \left \{
    \begin{array}{l}
      i\partial_t \varphi +\dfrac{1}{2}\Delta \varphi = \beta \Upsilon \varphi, \\
\partial_t \Upsilon = 2 \mathrm{Re} (\overline{v} \varphi), \\
      i\partial_t v +\dfrac{1}{2}\Delta v = \beta(\partial_t \Upsilon \varphi + \Upsilon \partial_t \varphi).
    \end{array}
\right .
\end{equation}
The discrete system \eqref{eq:relaxclassic}  has a discrete augmented equivalent
(see \cite{BessePhD, Besse2004}).
Let us denote by $v_{n+\frac12}=\dfrac{\varphi_{n+1}-\varphi_{n}}{\dt}$
the discrete time derivative of $\varphi_n$ and define the nonlinearities as
\begin{equation}
  \label{eq:method}
\left \{\begin{array}{l}
\displaystyle \Phi_{n+\frac12}=\Upsilon_{n+\frac12} \left(\frac{\varphi_{n+1}+\varphi_{n}}{2}\right),\\[0.2cm]
\displaystyle \Xi_{n+\frac12}=2 \mathrm{Re}
\left(v_{n+\frac12}\left(\overline{\frac{\varphi_{n+1}+\varphi_{n}}{2}}\right)\right) ,\\[0.2cm]
\displaystyle V_{n+\frac12}=\left(\frac{\Upsilon_{n+\frac32}+\Upsilon_{n-\frac12}}{2}\right)\left(\frac{v_{n+\frac32}+2v_{n+\frac12}+v_{n-\frac12}}{4}\right)\\[0.3cm]
\displaystyle \qquad \qquad +2\mathrm{Re}
\left(v_{n+\frac12}\left(\overline{\frac{\varphi_{n+1}+\varphi_{n}}{2}}\right)\right)
\left(\frac{\varphi_{n+2}+\varphi_{n+1}+\varphi_{n}+\varphi_{n-1}}{4}\right).
\end{array}  
\right .
\end{equation}
The augmented system writes
\begin{equation}
\label{systran}
\left \{
\begin{array}{lr}
\displaystyle  i \frac{\varphi_{n+2}-\varphi_{n+1}}{\dt}+\dfrac{1}{2}\Delta \left(\frac{\varphi_{n+2}+\varphi_{n+1}}{2}
\right) = \beta \Phi_{n+\frac32}, &(\ref{systran}.b)\\[0.3cm]
\displaystyle \frac{\Upsilon_{n+\frac32}-\Upsilon_{n-\frac12}}{2 \dt}=\Xi_{n+\frac12},&  (\ref{systran}.a)\\[0.3cm]
\displaystyle i \frac{v_{n+\frac32}-v_{n-\frac12}}{2\dt}+\dfrac{1}{2}\Delta\left(\frac{v_{n+\frac32}+2
v_{n+\frac12}+v_{n-\frac12}}{4}\right)=\beta V_{n+\frac12}.& (\ref{systran}.c)
\end{array}
\right .
\end{equation}
This system allows to compute $(\varphi_{n+2},\Upsilon_{n+\frac32},v_{n+\frac32})$
from 
$$ X_n :=
\left(\Upsilon_{n-\frac12},\Upsilon_{n+\frac12},\varphi_{n-1},\varphi_{n},\varphi_{n+1},
v_{n-\frac12},v_{n+\frac12}\right).$$ 
We consider the system \eqref{systran} as the mapping 
\begin{equation*}
  X_n\mapsto X_{n+1}.
\end{equation*}
The seven variables involved in $X_n$ are not independent.
If they satisfiy the five relations
\begin{equation*}
  \left \{
    \begin{array}{l}
      \displaystyle v_{n-\frac12}=\frac{\varphi_{n}-\varphi_{n-1}}{\dt},\\
      \displaystyle v_{n+\frac12}=\frac{\varphi_{n+1}-\varphi_{n}}{\dt},\\
      \displaystyle \Upsilon_{n+\frac12}+\Upsilon_{n-\frac12}=2|\varphi_n|^2,\\
      \displaystyle i\frac{\varphi_{n}-\varphi_{n-1}}{\dt} = \left( -\dfrac{1}{2}\Delta + \beta\Upsilon_{n-1/2} 
\right)\frac{\varphi_{n}+\varphi_{n-1}}{2},\\
      \displaystyle i\frac{\varphi_{n+1}-\varphi_n}{\dt} = \left(-\dfrac{1}{2} \Delta + \beta\Upsilon_{n+1/2} 
\right)\frac{\varphi_{n+1}+\varphi_n}{2},
    \end{array}
\right .
\end{equation*}
then the seven variables in $X_{n+1}$ satisfy the same five relations with
$n$ replaced by $n+1$. This fact is proved in \cite{Besse2004}.
We describe now how to build the seven initial data in $X_0$ from $\varphi_0=\varphi_{\text{in}}$
and $\Upsilon_{-\frac12}$ so that they satisfy the five relations above:
\begin{equation}
\label{eq:IDrelext}
  \left \{
    \begin{array}{ll}
      \displaystyle \Upsilon_{\frac12}&\displaystyle =2|\varphi_0|^2-\Upsilon_{-\frac12},\\
      \displaystyle \varphi_{-1}&\displaystyle = \left(2-i\dt \left(-\dfrac{1}{2}\Delta + \beta\Upsilon_{-1/2}\right)\right )^{-1}  \left(2+i\dt \left(-\dfrac{1}{2}\Delta + \beta\Upsilon_{-1/2}\right)\right ) \varphi_0,\\
      \displaystyle \varphi_{1}&\displaystyle = \left(2+i\dt \left(-\dfrac{1}{2}\Delta + \beta\Upsilon_{1/2}\right)\right )^{-1}  \left(2-i\dt \left(-\dfrac{1}{2}\Delta + \beta\Upsilon_{1/2}\right)\right ) \varphi_0,\\
      \displaystyle v_{-\frac12}&\displaystyle =(\varphi_{0}-\varphi_{-1})/\dt,\\
      \displaystyle v_{\frac12}& \displaystyle =(\varphi_{1}-\varphi_{0})/\dt.
    \end{array}
\right .
\end{equation}
Let us define the operators
$$
A=(i-\dt\Delta/4)^{-1}(i+\dt\Delta/4) \text{ and }
B=(i-\dt\Delta/4)^{-1},
$$
and the matrix of operators
  \begin{equation*}
    \mathcal{C} =
    \begin{pmatrix}
      I & 0 & 0 & 0 & 0 & 0 & 0 \\
      0 & I & 0 & 0 & 0 & 0 & 0 \\
      0 & 0 & B & 0 & 0 & 0 & 0 \\
      0 & 0 & 0 & B & 0 & 0 & 0 \\
      0 & 0 & 0 & 0 & B & 0 & 0 \\
      0 & 0 & 0 & 0 & 0 & B & 0 \\
      0 & 0 & 0 & 0 & 0 & 0 & B
    \end{pmatrix}.
  \end{equation*}
The mapping $X_n\mapsto X_{n+1}$ reads
\begin{equation}
\label{eq:relext}
\left ( 
  \begin{array}{c}
    \Upsilon_{n+\frac12}\\
    \Upsilon_{n+\frac32}\\
    \varphi_n \\
    \varphi_{n+1} \\
    \varphi_{n+2} \\
    v_{n+\frac12}\\
    v_{n+\frac32}\\
  \end{array}
\right )
=
\begin{pmatrix}
  0 & I & 0 & 0 & 0 & 0 & 0 \\
  I & 0 & 0 & 0 & 0 & 0 & 0 \\
  0 & 0 & 0 & I & 0 & 0 & 0 \\
  0 & 0 & 0 & 0 & I & 0 & 0 \\
  0 & 0 & 0 & 0 & A & 0 & 0 \\
  0 & 0 & 0 & 0 & 0 & 0 & I \\
  0 & 0 & 0 & 0 & 0 & A & A-I
\end{pmatrix}
\left ( 
  \begin{array}{c}
    \Upsilon_{n-\frac12}\\
    \Upsilon_{n+\frac12}\\
    \varphi_{n-1} \\
    \varphi_{n} \\
    \varphi_{n+1} \\
    v_{n-\frac12}\\
    v_{n+\frac12}\\
  \end{array}
\right )
+\dt \mathcal{C}
\left ( 
  \begin{array}{c}
    0\\
    2\Xi_{n+\frac12}\\
    0\\
    0\\
    \beta \Phi_{n+\frac32}\\
    0\\
    2\beta  V_{n+\frac12}
  \end{array}
\right ).
\end{equation}
In a more compact form, we define $\mathcal{B}$ and $\mathcal{M}$ so that
the mapping \eqref{eq:relext} reads
\begin{equation}
  \label{eq:relextsimpl}
  X_{n+1}= \mathcal{B} X_n +\dt \mathcal{C} \mathcal{M}(X_n,X_{n+1}).
\end{equation}
We introduce the hypotheses that will allow us to prove our consistency
and convergence result for the classical relaxation method
\eqref{eq:relaxclassic} in Theorem \ref{th:machinbidule}.

\begin{remark}
	The results below extend to more general cases. In particular, one may treat the case where $V$ is non zero smooth autonomous potential such that the multiplication by $V$ is a bounded linear operator between Sobolev spaces and the case where $V$ is a nonautonomous such operator with sufficient regularity with respect to time.  
\end{remark}

\begin{hypo} \label{hypobornes}
  We fix $d \in \{1,2,3\}$ and $s>d/2$.
  We assume $\varphi_0 \in H^{s+4}(\R^d)$ is given. We denote by $T^*>0$
  the existence time of the maximal solution $\varphi$ of the Cauchy problem
  \eqref{eq:GPE} (with $V=0$, $ \lambda=0$, $\Omega=0$ and $\sigma=1$) in $H^{s+4}(\R^d)$.
  We assume there exists $\dt_0>0$ such that $\tau \mapsto \varphi(\tau,\cdot)$ is a smooth map from $(-\dt_0,T^*)$ to $H^{s+4}(\R^d)$.
  Moreover, we assume that there exists $R_1, R_2>0$ such that for all
  $\Upsilon_{-1/2}\in H^{s+4}(\R^d)$ with $\|\Upsilon_{-1/2}\|_{H^{s+4}} \leq R_1$,
  the numerical solution $X_n$ (with initial datum \eqref{eq:IDrelext})
  is uniquely determined by \eqref{eq:relext} for all $\dt\in(0,\dt_0)$
  and all $n\in\N$ such that $nh\leq T$, and it satisfies
  $\|X_n\|_{(H^{s+2}(\R^d))^7}\leq R_2$ for all such $n$.  
\end{hypo}

\begin{remark}
  The hypotheses above on the exact solution are fullfilled in several cases. For example, for the exact solution $\varphi$, it is well known (see \cite{GV79}) that $T^*=+\infty$ in at least two cases:
  \begin{itemize}
  \item if $\varphi_{\text{in}}$ has small $H^{s+4}$-norm and $\beta<0$
  \item if $\beta>0$ and $\varphi_{\text{in}}\in H^{s+4}(\R^d)$.
  \end{itemize}
For the numerical solution, they are fullfilled provided $T^*<+\infty$ (see \cite{Besse2004}) and also when $T^*=+\infty$ and $\beta>0$ (see \cite{BessePhD}).
\end{remark}
Let $t_n=n\dt$ denote the discrete times and $t\mapsto X(t)$ the vector 
\begin{equation}
  \label{eq:defsolex}
X(t)=  \begin{pmatrix}
|\varphi(t-\dt/2,\cdot)|^2\\    
|\varphi(t+\dt/2,\cdot)|^2\\
\varphi(t-\dt,\cdot)\\
\varphi(t,\cdot)\\
\varphi(t+\dt,\cdot)\\
\partial_t \varphi(t-\dt/2,\cdot)\\
\partial_t \varphi(t+\dt/2,\cdot)
  \end{pmatrix}.
\end{equation}
Using the definition of $\mathcal{M}$
(see \eqref{eq:method} and \eqref{eq:relextsimpl}),
the fact that $H^s(\R^d)$, is an algebra since $s>d/2$, and the fact that
the exact and numerical solutions stay in a bounded set of $H^s(\R^d)$,
it is easy to prove the following lemma.
\begin{lemma}\label{lem:method}
Assume Hypotheses \ref{hypobornes} is satisfied.
There exists $C>0$ such that for all $\dt \in (0,\dt_0)$,
all $n \in \mathbb{N}$ such that $(n+1)\dt \leq T$
\begin{eqnarray*}
\lefteqn{\| \mathcal{M}(X_n,X_{n+1})-\mathcal{M}(X(t_n),X(t_{n+1}))\|_{(H^s(\R^d))^7}}
\\
&\leq &C\left ( \|X_n-X(t_n)\|_{(H^s(\R^d))^7}+\|X_{n+1}-X(t_{n+1})\|_{(H^s(\R^d))^7}\right ).
\end{eqnarray*}
Note that the constant $C$ depends only on the initial data.
\end{lemma}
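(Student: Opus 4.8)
The plan is to exploit the fact that every component of the map $\mathcal{M}$ defined in \eqref{eq:method} is a fixed multilinear expression---a finite sum of products of at most three of the seven entries of the state vectors $X_n$ and $X_{n+1}$, possibly composed with complex conjugation or with the (real-linear) operation $\mathrm{Re}$. Indeed, $\Xi_{n+\frac12}$ is bilinear in $(v_{n+\frac12},\varphi_n,\varphi_{n+1})$, $\Phi_{n+\frac32}$ is bilinear in $(\Upsilon_{n+\frac32},\varphi_{n+1},\varphi_{n+2})$, and $V_{n+\frac12}$ is a sum of two trilinear terms whose arguments are drawn from the entries of both $X_n$ and $X_{n+1}$. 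Since conjugation is an isometry of $H^s(\R^d)$ and $\mathrm{Re}$ is bounded on it, these operations are harmless and it suffices to estimate differences of products.

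First I would record the two ingredients that make the estimate work. On one hand, because $s>d/2$, the space $H^s(\R^d)$ is a Banach algebra, so there is $C_s>0$ with $\|fg\|_{H^s}\le C_s\|f\|_{H^s}\|g\|_{H^s}$. On the other hand, Hypotheses \ref{hypobornes} provides the uniform bound $\|X_n\|_{(H^{s+2}(\R^d))^7}\le R_2$, and the continuous embedding $H^{s+2}(\R^d)\hookrightarrow H^s(\R^d)$ then bounds every entry of $X_n$ in $H^s$ by a multiple of $R_2$. The same smoothness of $\tau\mapsto\varphi(\tau,\cdot)$ bounds $\varphi$ and $\partial_t\varphi$, hence every entry of the exact vector $X(t)$ defined in \eqref{eq:defsolex}, uniformly over the finite range of times appearing there, in $H^{s+2}(\R^d)$ and a fortiori in $H^s(\R^d)$; call $M$ a common bound for all entries of both $X_n$ and $X(t_n)$ in $H^s$.

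Then I would estimate each component of the difference separately using the telescoping identity for multilinear maps. For a product of three factors one writes
\[
 abc - a'b'c' = (a-a')\,bc + a'\,(b-b')\,c + a'b'\,(c-c'),
\]
and likewise for the bilinear terms, so that each summand isolates exactly one difference $a-a'$ of corresponding entries, multiplied by factors that are uniformly bounded in $H^s$ by $M$. Applying the algebra estimate term by term, each such summand is controlled by $C_s^2 M^2\|a-a'\|_{H^s}$, where $a-a'$ is the difference between a single entry of $X_n$ or $X_{n+1}$ and the corresponding entry of $X(t_n)$ or $X(t_{n+1})$. Summing over the finitely many summands and over the seven components of $\mathcal{M}$, and bounding each single-entry difference by the full vector norm $\|X_n-X(t_n)\|_{(H^s(\R^d))^7}$ or $\|X_{n+1}-X(t_{n+1})\|_{(H^s(\R^d))^7}$, yields the claimed inequality with a constant $C$ depending only on $C_s$, $M$, $\beta$ and the number of terms---hence, through $R_2$ and the exact solution, only on the initial data.

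The only point requiring a little care---which I expect to be the sole, minor obstacle---is the bookkeeping for $V_{n+\frac12}$, whose two trilinear pieces involve the shifted arguments $v_{n+\frac32}$, $v_{n-\frac12}$, $\varphi_{n+2}$ and $\varphi_{n-1}$ simultaneously: one must keep track of which arguments belong to $X_n$ and which to $X_{n+1}$, so that each resulting difference is genuinely a difference of an entry of one vector with the corresponding entry of its exact counterpart. This is precisely why both $\|X_n-X(t_n)\|$ and $\|X_{n+1}-X(t_{n+1})\|$ appear on the right-hand side.
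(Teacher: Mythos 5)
Your proposal is correct and takes essentially the same approach as the paper: the paper gives no detailed proof at all, stating only that the lemma is ``easy to prove'' from the definition of $\mathcal{M}$ in \eqref{eq:method}, the algebra property of $H^s(\R^d)$ for $s>d/2$, and the fact that the numerical and exact solutions remain in a bounded set of $H^s(\R^d)$ --- exactly the three ingredients you identify. Your telescoping decomposition of differences of bilinear and trilinear terms, with conjugation and $\mathrm{Re}$ handled as bounded operations, is precisely the standard argument that fills in this sketch.
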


\noindent
Before starting the proof of our main result of this section (see Theorem
\ref{th:machinbidule}), we state and prove
another lemma.

\begin{lemma}\label{lem:puissancebloc}
There exists a constant $b>0$ such that
the operator $\mathcal{B}$ defined in \eqref{eq:relext}
and \eqref{eq:relextsimpl} satisfies
for all
$\dt>0$, and $n\in \N$, 
  \begin{equation}
    \label{eq:2}
    \vvvert \mathcal{B}^n\mathcal{C}\vvvert \leq b,
  \end{equation}
where $\vvvert \cdot \vvvert$ is the norm of linear continuous operators from $(H^s(\R^d))^7$ to itself. 
\end{lemma}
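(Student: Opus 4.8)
The plan is to exploit the block structure of $\mathcal{B}$ together with the fact that $A$ and $B$ are Fourier multipliers. Writing $\Delta$ as multiplication by $-|\xi|^2$ on the Fourier side and setting $a=\dt|\xi|^2/4\ge 0$, the operators $A$ and $B$ act as multiplication by the scalars $\alpha=\tfrac{i-a}{i+a}$ and $\beta=\tfrac{1}{i+a}$, with $|\alpha|=1$ and $|\beta|=\tfrac{1}{\sqrt{1+a^2}}\le 1$. Inspecting \eqref{eq:relext}, one sees that $\mathcal{B}$ is block-diagonal for the three groups of variables $\{\Upsilon_{n\pm1/2}\}$, $\{\varphi_{n-1},\varphi_n,\varphi_{n+1}\}$ and $\{v_{n\pm1/2}\}$, and that $\mathcal{C}$ is diagonal and respects the same grouping. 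Hence $\mathcal{B}^n\mathcal{C}$ is block-diagonal and it suffices to bound each block. Since each block is a matrix-valued Fourier multiplier whose symbol depends only on $a$, its operator norm on $(H^s(\R^d))^k$ equals the supremum over $\xi$ (equivalently over $a\ge0$) of the Euclidean operator norm of the corresponding matrix of scalars; I will bound these uniformly in $a$ and $n$, which yields \eqref{eq:2}.

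The first two blocks are easy. The $\Upsilon$-block of $\mathcal{B}$ is the involution $\left(\begin{smallmatrix}0&I\\I&0\end{smallmatrix}\right)$, whose powers have norm $1$, and the corresponding block of $\mathcal{C}$ is the identity. For the $\varphi$-block $\left(\begin{smallmatrix}0&I&0\\0&0&I\\0&0&A\end{smallmatrix}\right)$, a direct induction shows that for $n\ge2$ only the last column survives, with entries $A^{n-2},A^{n-1},A^{n}$; since $|\alpha|=1$ each acts isometrically, so the block has norm bounded independently of $n$ and $a$, and the extra factor $B$ from $\mathcal{C}$ is a contraction.

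The heart of the matter is the $v$-block $M=\left(\begin{smallmatrix}0&1\\ \alpha&\alpha-1\end{smallmatrix}\right)$ on the Fourier side. Its characteristic polynomial factors as $(\mu+1)(\mu-\alpha)$, so its eigenvalues are $-1$ and $\alpha$, both of modulus $1$. Diagonalizing with $P=\left(\begin{smallmatrix}1&1\\-1&\alpha\end{smallmatrix}\right)$, for which $\det P=\alpha+1$, and noting that $\alpha=-1$ is never attained (it would force $i=-i$), I obtain an explicit formula for $M^n$ in which every entry equals $\tfrac{1}{\alpha+1}$ times a fixed combination of $\alpha^{k}$ and $(-1)^{k}$ of modulus at most $2$; thus $\|M^n\|\le \tfrac{C}{|\alpha+1|}$ uniformly in $n$. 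This bound degenerates as $\alpha\to-1$, i.e. at high frequencies, where the two unit-modulus eigenvalues collide and $\|M^n\|$ may indeed grow. The key point is that the factor $B$ from $\mathcal{C}$ absorbs this growth exactly: since $|\alpha+1|=\tfrac{2}{\sqrt{1+a^2}}=2|\beta|$, multiplying $M^n$ on the right by $\beta$ gives $\|\beta\,M^n\|\le \tfrac{C}{2}$ uniformly in $a$ and $n$. Taking $b$ to be the largest of the three resulting constants then gives \eqref{eq:2}.

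I expect the $v$-block to be the only genuine obstacle. The crude estimate $\|M^n\|\le Cn$ (available because both eigenvalues lie on the unit circle) is useless here, and the whole argument hinges on the right multiplication by $\mathcal{C}$ and, precisely, on the exact matching $|\beta|=|\alpha+1|/2$ between the contraction $B$ and the degeneracy of the eigenprojectors of $M$ at high frequency, which is what renders the bound uniform.
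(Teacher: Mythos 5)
Your proof is correct and follows essentially the same route as the paper: the same three-block decomposition of $\mathcal{B}$, the same diagonalization of the $v$-block with eigenvalues $-1$ and $A$ (your $\alpha$), and the same key cancellation --- your scalar identity $|\beta|=|\alpha+1|/2$ is exactly the paper's operator identity $B(I+A)^{-1}=\tfrac12 I$ (up to a harmless factor of modulus one), which compensates the degeneracy of the eigenvector matrix at high frequencies. Working with Fourier symbols instead of operator calculus is a purely cosmetic difference; also note that your $v$-block $\bigl(\begin{smallmatrix}0&1\\ \alpha&\alpha-1\end{smallmatrix}\bigr)$ correctly reflects the matrix in \eqref{eq:relext}, whereas the paper's displayed $\mathcal{B}_3$ contains a typo ($A$ instead of $I$ in the top-right entry) that its own subsequent computation silently corrects.
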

\begin{proof}
  The operator $\mathcal{B}$ is defined by three diagonal blocks
\begin{equation*}
\mathcal{B}_1=
\begin{pmatrix}
  0 & I \\ I&0
\end{pmatrix}, \quad
\mathcal{B}_2=
\begin{pmatrix}
  0 & I & 0\\
0& 0& I\\
0& 0& A
\end{pmatrix}, \quad \text{and} \quad
\mathcal{B}_3=
\begin{pmatrix}
  0 & A \\ A & A-I
\end{pmatrix}.
\end{equation*}
The first block $\mathcal{B}_1$ is an isometry from $(H^s(\R^d))^2$ to itself and so are all its powers. The powers of the second block $\mathcal{B}_2$ read for all $n\geq 2$
\begin{equation*}
  \mathcal{B}_2^n=
\begin{pmatrix}
  0 & 0 & A^{n-2}\\
0& 0& A^{n-1}\\
0& 0& A^n
\end{pmatrix}.
\end{equation*}
Since all the powers of $A$ are of norm less than one, we infer that the norm of $\mathcal{B}_2^n$ is less than $\sqrt{3}$. Since the norm of $B$ is less than one, we infer that the norm of 
$$\mathcal{B}_2^n
\begin{pmatrix}
  B&0&0\\0&B&0\\0&0&B
\end{pmatrix},
$$
is less than $\sqrt{3}$.

The block $\mathcal{B}_3$ can be diagonalized by blocks as 
\begin{equation*}
  \mathcal{B}_3=
  \begin{pmatrix}
    I & I \\ -I & A
  \end{pmatrix}
  \begin{pmatrix}
    -I & 0 \\ 0 & A
  \end{pmatrix}
  \begin{pmatrix}
    I & I \\ -I & A
  \end{pmatrix}^{-1},
\end{equation*}
so that for all $n \in \N$, 
\begin{equation}
\label{eq:puissanceB3}
  \mathcal{B}_3^n=
  \begin{pmatrix}
    I & I \\ -I & A
  \end{pmatrix}
  \begin{pmatrix}
    (-I)^n & 0 \\ 0 & A^n
  \end{pmatrix}
  \begin{pmatrix}
    I & I \\ -I & A
  \end{pmatrix}^{-1}.
\end{equation}
Therefore, the last $2\times 2$ block of $\mathcal{B}^n\mathcal{C}$ reads
\begin{equation*}
  \mathcal{B}_3^n
  \begin{pmatrix}
    B & 0 \\
    0 & B \\
  \end{pmatrix}
  =
  \begin{pmatrix}
    ((-I)^nA + A^n)B(I+A)^{-1} & (A^n-(-I)^n)B(I+A)^{-1}\\
    ((-I)^{n+1}A + A^{n+1})B(I+A)^{-1} & ((-I)^n+A^{n+1})B(I+A)^{-1}
  \end{pmatrix}.
\end{equation*}
Since $B(I+A)^{-1}=(1/2)I$, we infer
\begin{equation*}
  \mathcal{B}_3^n
  \begin{pmatrix}
    B & 0 \\
    0 & B \\
  \end{pmatrix}
  =\frac12
  \begin{pmatrix}
    ((-I)^nA + A^n) & (A^n-(-I)^n)\\
    ((-I)^{n+1}A + A^{n+1}) & ((-I)^n+A^{n+1})
  \end{pmatrix}.
\end{equation*}
Since all the powers of $A$ are of norm less than $1$, we infer that
\begin{equation*}
  \forall n\in\N,\qquad \left\vvvert \mathcal{B}_3^n
  \begin{pmatrix}
    B & 0 \\
    0 & B \\
  \end{pmatrix}
  \right\vvvert \leq 4.
\end{equation*}
This proves the result.
\end{proof}

\begin{remark}
The powers of the operator $\mathcal{B}$ are not uniformly bounded. However, the powers of $\mathcal{B}$ mutiplied by $\mathcal{C}$ are uniformly bounded as shown above. The main reason is that the third matrix in the right hand side of \eqref{eq:puissanceB3} becomes singular at the end of the spectrum of $\Delta$.
\end{remark}

\begin{lemma}
\label{lem:estiminit}
Assume $d, s, R_1, \dt_0$ and $\varphi_{\text{in}}$ are given as in Hypotheses \ref{hypobornes}.
There exists $c>0$ such that for all $\Upsilon_{-1/2}\in H^{s+4}$ with $\|\Upsilon_{-1/2}\|_{H^{s+4}}\leq R_1$ and all
$\dt\in(0,\dt_0)$,
\begin{equation}
\label{eq:estimDI}
  \|X_0-X(0)\|_{(H^{s+2}(\R^d))^7} \leq
c \left(\|\Upsilon_{-1/2}-|\varphi(-\dt/2)|^2\|_{H^{s+4}(\R^d)} +\dt^2\right).
\end{equation}
\end{lemma}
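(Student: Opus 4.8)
The plan is to bound the seven components of $X_0-X(0)$ separately and collect the estimates. Write $\eta:=\|\Upsilon_{-1/2}-|\varphi(-\dt/2)|^2\|_{H^{s+4}}$ and set $u(t):=\varphi(t,\cdot)$, which by Hypotheses \ref{hypobornes} is a smooth $H^{s+4}$-valued map on $(-\dt_0,T^*)$, so all its time derivatives $u^{(k)}$ remain bounded in $H^{s+4}$ on $[-\dt_0,\dt]$; recall $u$ solves $iu'(t)=N(t)u(t)$ with $N(t)=-\tfrac12\Delta+\beta|u(t)|^2$. Three components are immediate. The fourth vanishes, since $\varphi_0=\varphi_{\text{in}}=u(0)$. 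The first is $\le\eta$ by the embedding $H^{s+4}\hookrightarrow H^{s+2}$. For the second, using $\Upsilon_{1/2}=2|\varphi_0|^2-\Upsilon_{-1/2}$ I write $\Upsilon_{1/2}-|u(\dt/2)|^2=\big(2|u(0)|^2-|u(-\dt/2)|^2-|u(\dt/2)|^2\big)-\big(\Upsilon_{-1/2}-|u(-\dt/2)|^2\big)$; the first bracket is $O(\dt^2)$ in $H^{s+4}$ by Taylor expansion of the smooth $H^{s+4}$-valued map $t\mapsto|u(t)|^2$ (smooth because $H^{s+4}$ is an algebra), so this component is $\le C(\dt^2+\eta)$.

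The core is the third component $\varphi_{-1}-u(-\dt)$ (the fifth, $\varphi_1-u(\dt)$, is identical up to the sign of $\dt$ and the use of $\Upsilon_{1/2}$). Set $L_-=-\tfrac12\Delta+\beta\Upsilon_{-1/2}$; the definition \eqref{eq:IDrelext} is exactly the implicit relation $(i+\tfrac\dt2 L_-)\varphi_{-1}=(i-\tfrac\dt2 L_-)u(0)$. Subtracting this from the same relation written for the exact solution gives $(i+\tfrac\dt2 L_-)\big(u(-\dt)-\varphi_{-1}\big)=\tilde d$ with defect $\tilde d:=i\big(u(-\dt)-u(0)\big)+\tfrac\dt2 L_-\big(u(-\dt)+u(0)\big)$, hence $u(-\dt)-\varphi_{-1}=(i+\tfrac\dt2 L_-)^{-1}\tilde d$. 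I then split $L_-=N(-\dt/2)+\beta e$, where $e:=\Upsilon_{-1/2}-|u(-\dt/2)|^2$, so that $\tilde d=d+\tfrac\dt2\beta\,e\,\big(u(-\dt)+u(0)\big)$ with $d:=i\big(u(-\dt)-u(0)\big)+\tfrac\dt2 N(-\dt/2)\big(u(-\dt)+u(0)\big)$. The second piece is $\le C\dt\|e\|_{H^{s+2}}\le C\dt\eta$ using that $H^{s+2}$ is an algebra and $u$ is bounded. For $d$, I Taylor expand $u(0)$ and $u(-\dt)$ about the midpoint $-\dt/2$ and use the exact relation $N(-\dt/2)u(-\dt/2)=iu'(-\dt/2)$: the $O(\dt)$ terms cancel and $d=-\tfrac{i\dt^3}{24}u'''(-\dt/2)+\tfrac{\dt^3}{8}N(-\dt/2)u''(-\dt/2)+O(\dt^4)$, whence $\|d\|_{H^{s+2}}\le C\dt^3$. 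Here it is \emph{essential} that $u''(-\dt/2)\in H^{s+4}$, so that $N(-\dt/2)u''$, which costs two derivatives through the Laplacian, still lands in $H^{s+2}$: this is precisely why the hypotheses require $H^{s+4}$ data while the conclusion is measured in $H^{s+2}$.

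It remains to bound $(i\pm\tfrac\dt2 L_\pm)^{-1}$ on $H^{s+2}$ uniformly in $\dt\in(0,\dt_0)$. Writing $i+\tfrac\dt2 L_-=(i-\tfrac\dt4\Delta)\big(I+(i-\tfrac\dt4\Delta)^{-1}\tfrac\dt2\beta\Upsilon_{-1/2}\big)$ and using that the free resolvent $(i-\tfrac\dt4\Delta)^{-1}$ has $H^{s+2}$-operator norm $\le1$ while multiplication by $\Upsilon_{-1/2}$ is bounded on $H^{s+2}$ (algebra, with $\|\Upsilon_{-1/2}\|_{H^{s+4}}\le R_1$), a Neumann series gives $\|(i+\tfrac\dt2 L_-)^{-1}\|_{H^{s+2}}\le2$ once $\dt_0$ is taken small enough; this is the one place where one may shrink $\dt_0$, and uniform invertibility is in any case consistent with the well-posedness assumed in Hypotheses \ref{hypobornes}. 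Combining, $\|u(-\dt)-\varphi_{-1}\|_{H^{s+2}}\le C(\dt^3+\dt\eta)\le C(\dt^2+\eta)$, and likewise $\|u(\dt)-\varphi_1\|_{H^{s+2}}\le C(\dt^2+\eta)$, after noting that the relevant coefficient error $\|\Upsilon_{1/2}-|u(\dt/2)|^2\|_{H^{s+4}}\le C(\dt^2+\eta)$ was already obtained for the second component.

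Finally, the sixth and seventh components follow from the third and fifth. Writing $v_{-1/2}-u'(-\dt/2)=\big(\tfrac{u(0)-u(-\dt)}{\dt}-u'(-\dt/2)\big)+\tfrac1\dt\big(u(-\dt)-\varphi_{-1}\big)$, the first term is $O(\dt^2)$ (a centered difference about $-\dt/2$ of the smooth $H^{s+4}$-valued map) and the second is $\le C(\dt^2+\eta)$ by the previous step, the factor $1/\dt$ exactly absorbing one power of $\dt$; the seventh component is symmetric. Summing the seven bounds yields \eqref{eq:estimDI}. I expect the main obstacle to be the consistency estimate $\|d\|_{H^{s+2}}=O(\dt^3)$ together with the regularity bookkeeping it forces (the $H^{s+4}\to H^{s+2}$ loss coming from $N(-\dt/2)u''$); the uniform-in-$\dt$ resolvent bound is the only other nontrivial ingredient.
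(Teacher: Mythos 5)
Your proof is correct, and its skeleton (component\hyp{}by\hyp{}component estimates, the defect\hyp{}plus\hyp{}stability split for $\varphi_{\pm1}$, the key gain of one factor of $\dt$ there that is later eaten by the $1/\dt$ in $v_{\pm1/2}$) matches the paper's. But the two technical ingredients for the core components are handled by genuinely different means. For consistency, you Taylor expand around the midpoint $-\dt/2$ and invoke the equation there, so the $O(\dt)$ terms cancel by the midpoint-rule structure and $d=O(\dt^3)$ drops out cleanly; the paper instead expands at $t=0$ and substitutes the PDE term by term, which is algebraically heavier but yields the same bound $\|r(\dt)\|_{H^{s+2}}\le c\,\dt(\eta+\dt^2)$. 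For stability, you factor $i+\tfrac{\dt}{2}L_-=(i-\tfrac{\dt}{4}\Delta)\bigl(I+(i-\tfrac{\dt}{4}\Delta)^{-1}\tfrac{\dt}{2}\beta\Upsilon_{-1/2}\bigr)$ and sum a Neumann series, whereas the paper runs an energy argument: differentiate the error equation, pair with $\overline{\partial_x^\alpha e}$, take real parts so that the skew-adjoint Laplacian and the zeroth-order term (real-valued $\Upsilon_{1/2}$) vanish, and induct on $|\alpha|$ over the Leibniz commutator terms. The trade-off is concrete: the energy method needs no smallness of $\dt$, so the paper proves the lemma on the whole range $(0,\dt_0)$, while your Neumann series forces you to shrink $\dt_0$ --- which the lemma's statement does not permit, and your remark that this is ``consistent with the well-posedness assumed'' is not by itself a justification. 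The gap is easily closed, and you should say so explicitly: for $\dt\in[\dt_0',\dt_0)$ one has $\dt^2\ge(\dt_0')^2$, and Hypotheses \ref{hypobornes} give $\|X_0\|_{(H^{s+2}(\R^d))^7}\le R_2$ while $\|X(0)\|_{(H^{s+2}(\R^d))^7}$ is bounded by the smoothness of the exact solution, so \eqref{eq:estimDI} holds there trivially with a larger constant; alternatively, note that Theorem \ref{th:machinbidule} shrinks $\dt_0$ anyway, so nothing is lost downstream. With that one sentence added, your argument is a complete and somewhat shorter proof; what it buys is brevity and a transparent source of the $O(\dt^3)$ defect, what it gives up is the unconditional (in $\dt$) stability estimate that the paper's real-part cancellation provides.
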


\begin{proof}
  The $H^{s+2}$-norm of each of the seven components of the vector $X_0-X(0)$
  is estimated separately.
  The $H^{s+2}$-norm of the first component
  $\Upsilon_{-1/2}-|\varphi(-\dt/2)|^2$ is bounded by the $H^{s+4}$-norm of the same quantity.
  For the $H^{s+2}$-norm of the second component, we define
  $f(t)=|\varphi(t)|^2$, which is a smooth function from $(-\dt_0,\dt_0)$ to
  $H^{s+4}$, thanks to Hypotheses \ref{hypobornes}. We may write using
  a Taylor formula at $0$
  \begin{equation*}
    f\left(-\frac{\dt}{2}\right) = |\varphi_0|^2 - 2\mathrm{Re}\left(\overline{\varphi_0}\partial_t\varphi(0)\right)\frac{\dt}{2} + \int_0^{-\dt/2}\left(\frac{-\dt}{2}-\sigma\right) f''(\sigma){\rm d}\sigma,
  \end{equation*}
  and similarly
  \begin{equation*}
    f\left(\frac{\dt}{2}\right) = |\varphi_0|^2 + 2\mathrm{Re}\left(\overline{\varphi_0}\partial_t\varphi(0)\right)\frac{\dt}{2} + \int_0^{\dt/2}\left(\frac{\dt}{2}-\sigma\right) f''(\sigma){\rm d}\sigma.
  \end{equation*}
  Therefore, one has
  \begin{eqnarray*}
    \Upsilon_{1/2}-f\left(\frac{\dt}{2}\right) & = & 2|\varphi_0|^2 - \Upsilon_{-1/2} - f\left(\frac{\dt}{2}\right)\\
                                               & = & f\left(-\frac{\dt}{2}\right) - \int_0^{-\dt/2}\left(\frac{-\dt}{2}-\sigma\right) f''(\sigma){\rm d}\sigma  -\int_0^{\dt/2}\left(\frac{\dt}{2}-\sigma\right) f''(\sigma){\rm d}\sigma - \Upsilon_{-1/2}.
  \end{eqnarray*}
  By triangle inequality, We infer that
  \begin{equation}
    \label{eq:estim2eme}
    \|\Upsilon_{1/2}-|\varphi(\dt/2)|^2\|_{H^{s+2}} \leq \|\Upsilon_{1/2}-|\varphi(\dt/2)|^2\|_{H^{s+4}} \leq c
    \left(\|\Upsilon_{-1/2}-|\varphi(-\dt/2)|^2\|_{H^{s+4}} + \dt^2\right),
  \end{equation}
  where $c=\max(1,\sup_{\sigma\in(-\dt_0),\dt_0}\|f''(\sigma)\|_{H^{s+2}})$
  only depends on the exact solution of \eqref{eq:GPE}.
  We now estimate the $H^{s+2}$-norm of the fifth component of $X_0-X(0)$.
  Note that the $H^{s+2}$-norm of the third component can be estimated the
  very same way and that the $H^{s+2}$-norm of the fourth component is zero.
  We start with the identity
  \begin{equation*}
    \varphi_1=\varphi_0
    - i\dt\left(-\frac12\Delta+\beta\Upsilon_{1/2}\right)\frac{\varphi_1+\varphi_0}{2},
  \end{equation*}
  and we denote by $r(\dt)$ the consistency error defined by
  \begin{equation*}
    r(\dt)= \varphi(\dt)-\varphi_0+i\dt\left(-\frac12\Delta + \beta\Upsilon_{1/2}\right)\frac{\varphi(\dt)+\varphi_0}{2}.
  \end{equation*}
  Using the fact that $t\mapsto \varphi(t, \cdot)$ is a smooth function from
  $(-\dt_0,\dt_0)$ to $H^{s+4}$, we may write another Taylor expansion to obtain
  \begin{eqnarray*}
    \lefteqn{r(\dt)}\\\
    & = & \varphi_0+\dt\partial_t\varphi(0) +\frac{\dt^2}{2}\partial_t^2 \varphi(0)+ \int_0^{\dt} \dfrac{(\dt-\sigma)^2}{2}\partial_t^3\varphi(\sigma){\rm d}\sigma - \varphi_0 \\
    & &+ i\dt\left(
          -\frac12\Delta+\beta\Upsilon_{1/2}\right)
          \left(
          \varphi_0+\dfrac{\dt}{2}\partial_t\varphi(0)+\frac12\int_0^{\dt}(\dt-\sigma)\partial_t^2\varphi(\sigma){\rm d}\sigma
          \right)\\
    &=& -i\dt^2\beta\mathrm{Re}\left(\dfrac{i}{2}\overline{\varphi_0}\Delta\varphi_0\right)\varphi_0 +i\dt\beta\left(|\varphi_0|^2-\Upsilon_{-1/2}\right)\left(\varphi_0-i\dfrac{\dt}{2}\left(-\dfrac{1}{2}\Delta+\beta|\varphi_0|^2\right)\varphi_0\right)\\
    & & + i \dfrac{\dt}{2}\left( -\frac12\Delta+\beta\Upsilon_{1/2}\right)\int_0^{\dt}(\dt-\sigma)\partial_t^2\varphi(\sigma){\rm d}\sigma+ \int_0^{\dt} \dfrac{(\dt-\sigma)^2}{2}\partial_t^3\varphi(\sigma){\rm d}\sigma \\
    &=& -i\dt^2\beta\mathrm{Re}\left(\dfrac{i}{2}\overline{\varphi_0}\Delta\varphi_0\right)\varphi_0 +i\dt\beta\left(|\varphi_0|^2-|\varphi\left(-\dt/2\right)|^2\right)\left(\varphi_0-i\dfrac{\dt}{2}\left(-\dfrac{1}{2}\Delta+\beta|\varphi_0|^2\right)\varphi_0\right)\\
    & & +i\dt\beta\left(|\varphi\left(-\dt/2\right)|^2-\Upsilon_{-1/2}\right)\left(\varphi_0-i\dfrac{\dt}{2}\left(-\dfrac{1}{2}\Delta+\beta|\varphi_0|^2\right)\varphi_0\right) \\
    & & + i \dfrac{\dt}{2}\left( -\frac12\Delta+\beta\Upsilon_{1/2}\right)\int_0^{\dt}(\dt-\sigma)\partial_t^2\varphi(\sigma){\rm d}\sigma+ \int_0^{\dt} \dfrac{(\dt-\sigma)^2}{2}\partial_t^3\varphi(\sigma){\rm d}\sigma .
\end{eqnarray*}
Using $$\left|\varphi\left(-\dfrac{\dt}{2}\right)\right|^2=|\varphi(0)|^2+\dt\mathrm{Re}\left(i\overline{\varphi_0}\left(-\dfrac12\Delta\varphi_0+\beta|\varphi_0|^2\varphi_0\right)\right)+\int_0^{-\dt/2} (-\dt/2-\sigma)\partial_t^2(|\varphi|^2)(\sigma){\rm d}\sigma,$$
we obtain
\begin{eqnarray*}
\lefteqn{r(\dt)}\\\
    &=& i\dt\beta\left(|\varphi\left(-\dt/2\right)|^2-\Upsilon_{-1/2}\right)\left(\varphi_0-i\dfrac{\dt}{2}\left(-\dfrac{1}{2}\Delta+\beta|\varphi_0|^2\right)\varphi_0\right) \\
    & &+\dfrac{\dt^3}{2}\beta \mathrm{Re}\left(\dfrac{i}{2}\overline{\varphi_0}\Delta \varphi_0\right)\left(-\dfrac{1}{2}\Delta+\beta|\varphi_0|^2\right)\varphi_0 \\
    & & -i\dt\beta\left(\varphi_0-i\dfrac{\dt}{2}\left(-\dfrac{1}{2}\Delta+\beta|\varphi_0|^2\right)\varphi_0\right)\int_0^{-\dt/2} (-\dt/2-\sigma)\partial_t^2(|\varphi|^2)(\sigma){\rm d}\sigma \\
    & & + i \dfrac{\dt}{2}\left( -\frac12\Delta+\beta\Upsilon_{1/2}\right)\int_0^{\dt}(\dt-\sigma)\partial_t^2\varphi(\sigma){\rm d}\sigma+ \int_0^{\dt} \dfrac{(\dt-\sigma)^2}{2}\partial_t^3\varphi(\sigma){\rm d}\sigma.
      \end{eqnarray*}
  Note that we have
  \begin{equation*}
   \left\|\int_0^{\dt} \dfrac{(\dt-\sigma)^2}{2}\partial_t^3\varphi(\sigma){\rm d}\sigma\right\|_{H^{s+2}} \leq c\dt^3, \quad  \left\|
      i\frac{\dt}{2} \left(-\frac12\Delta+\beta\Upsilon_{1/2}\right)
          \int_0^{\dt}(\dt-\sigma)\partial_t^2\varphi(\sigma){\rm d}\sigma
    \right\|_{H^{s+2}} \leq c\dt^3,
  \end{equation*}
  \begin{equation*}
   \left\|i\dt\beta\left(\varphi_0-i\dfrac{\dt}{2}\left(-\dfrac{1}{2}\Delta+\beta|\varphi_0|^2\right)\varphi_0\right)\int_0^{-\dt/2} (-\dt/2-\sigma)\partial_t^2(|\varphi|^2)(\sigma){\rm d}\sigma\right\|_{H^{s+2}} \leq c\dt^3,   \end{equation*}
   and
    \begin{equation*}
   \left\|\dfrac{\dt^3}{2}\beta \mathrm{Re}\left(\dfrac{i}{2}\overline{\varphi_0}\Delta \varphi_0\right)\left(-\dfrac{1}{2}\Delta+\beta|\varphi_0|^2\right)\varphi_0\right\|_{H^{s+2}} \leq c\dt^3,   \end{equation*}
  where $c$ doesn't depend on $\dt$.
 Then we infer from the estimates above that 
  \begin{equation}
    \label{eq:estim5lem}
    \|r(\dt)\|_{H^{s+2}} \leq c \dt \left(\|\Upsilon_{-1/2}-\varphi(-\dt/2)\|_{H^{s+2}}
      +\dt^2
      \right).
    \end{equation}

    Now, we denote by $e(\dt)$ the fifth component $\varphi_1-\varphi(\dt)$
    of the vector $X_0-X(0)$ and we have
    \begin{equation*}
      e(\dt)=-i\dfrac{\dt}{2}\left(-\frac12\Delta+\beta\Upsilon_{1/2}\right) e(\dt)-r(\dt).
    \end{equation*}
    We want to estimate the $H^{s+2}$-norm of $e(\dt)$ using this relation
    and the estimate \eqref{eq:estim5lem}. To this aim, we take $\alpha\in\N^d$
    with $|\alpha|=\alpha_1+\dots+\alpha_d\leq s+2$ and differentiate
    the relation above to obtain that
    \begin{equation*}
      \partial_x^\alpha e(\dt)=i\dt\frac14\Delta\partial_x^\alpha e(\dt)
      -i\dfrac{\dt}{2}\beta\partial_x^\alpha (\Upsilon_{1/2} e(\dt))-\partial_x^\alpha r(\dt).
    \end{equation*}
    Multiplying this relation by $\overline{\partial_x^\alpha e(\dt)}$
    integrating over $\R^d$, and taking the real part, we obtain
    \begin{equation}
    \label{eq:estim6lem}
      \|\partial_x^\alpha e(\dt)\|_2^2=-\beta\dfrac{\dt}{2}\mathrm{Re}\left( i\int_{\R^d}
        \overline{\partial_x^\alpha e(\dt)} \partial_x^\alpha (\Upsilon_{1/2}e(\dt))
      \right) - \mathrm{Re}\left(\int_{\R^d}\overline{\partial_x^\alpha e(\dt)}\partial_x^\alpha r(\dt)\right).
    \end{equation}
    When $\alpha=0_{\N^d}$, the first term on the right hand side in the equation above vanishes and 
    $$\| e(\dt)\|_2^2 \leq  \| e(\dt)\|_2 \| r(\dt)\|_2,$$
    using Cauchy-Schwartz inequality. We infer, 
\begin{equation}
\label{eq:estim7lem}
\| e(\dt)\|_2 \leq  \| r(\dt)\|_2. 
\end{equation}
    Now, when $\alpha \neq 0_{\N^d}$, the Leibniz' rule applied in \eqref{eq:estim6lem} provides us with:
    \begin{equation*}
      \|\partial_x^\alpha e(\dt)\|_2^2=-\beta\dfrac{\dt}{2}\sum_{\{k:k\leq \alpha\}}c(k,\alpha)\mathrm{Re}\left( i\int_{\R^d}
        \overline{\partial_x^\alpha e(\dt)} \partial_x^k \Upsilon_{1/2}\partial_x^{\alpha-k}e(\dt)
     \right) - \mathrm{Re}\left(\int_{\R^d}\overline{\partial_x^\alpha e(\dt)}\partial_x^\alpha r(\dt)\right),
    \end{equation*}
    where $c(k,\alpha)$ are integers. Note that since $\Upsilon_{1/2}$ is real valued, the term corresponding to $k=0_{\N^d}$ in the sum vanishes. 
    Then, with Cauchy-Schwarz inequality, we infer
    \begin{eqnarray*}
      \|\partial_x^\alpha e(\dt)\|_2^2 &\leq& \beta\dfrac{\dt}{2}
                                               \sum_{\{k:k\leq \alpha, |k| \geq 1\}}c(k,\alpha) \|\partial_x^\alpha e(\dt)\|_2\|\partial_x^k \Upsilon_{1/2}\partial_x^{\alpha-k}e(\dt))\|_2
                                                + \|\partial_x^\alpha e(\dt)\|_2 \|\partial_x^\alpha r(\dt)\|_2.\\
     &\leq& \beta\dfrac{\dt}{2}
                                               \sum_{\{k:k\leq \alpha, |k| \geq 1\}}c(k,\alpha) \|\partial_x^\alpha e(\dt)\|_2\|\partial_x^k \Upsilon_{1/2}\|_{\infty}\|\partial_x^{\alpha-k}e(\dt))\|_2
                                                + \|\partial_x^\alpha e(\dt)\|_2 \|\partial_x^\alpha r(\dt)\|_2.                             
    \end{eqnarray*}
    Since $k \leq \alpha$ in the sum above, we have $|k|\leq |\alpha|$. For such $k$, one has
    $$\|\partial_x^k \Upsilon_{1/2}\|_\infty \leq c \|\partial_x^k \Upsilon_{1/2}\|_{H^{(d+1)/2}} \leq c \| \Upsilon_{1/2}\|_{H^{(d+1)/2+|k|}} \leq c \| \Upsilon_{1/2}\|_{H^{(d+1)/2+s+2}}  \leq c \| \Upsilon_{1/2}\|_{H^{s+4}},$$
    where $c$ is the Sobolev constant of the injection from $H^{(d+1)/2}(\R^d)$ to $L^\infty(\R^d)$ and where we have used the fact that $d \in \{1, 2, 3\}$. Since the $H^{s+4}$-norm of $\Upsilon_{1/2}$ is
    controlled by \eqref{eq:estim2eme}, we have
    \begin{equation*}
      \|\partial_x^\alpha e(\dt)\|_2 \leq c_\alpha
                                                \| e(\dt)\|_{H^{|\alpha|-1}}
                                                + \|\partial_x^\alpha r(\dt)\|_2,
    \end{equation*}
    where $c_\alpha$ does not depend on $\dt \in (0,\dt_0)$.
    Then, by induction on $|\alpha|\in\{0,\dots,s+2\}$ starting with \eqref{eq:estim7lem} for $\alpha=0_{\N^d}$, 
    we have  for some positive constant $c_{s+2}$:
    \begin{equation*}
      \|\partial_x^\alpha e(\dt)\|_2 \leq c_{s+2} \|r(\dt)\|_{H^{s+2}}, 
    \end{equation*}
    for all $\dt \in (0,\dt_0)$ and all $\alpha\in\N^d$
    with $|\alpha|\leq s+2$.
    Therefore, we have $\|e(\dt)\|_{H^{s+2}}$ is controlled by $\|r(\dt)\|_{H^{s+2}}$
    and the conclusion for the fifth term follows using \eqref{eq:estim5lem}:
\begin{equation}
\label{eq:estim8lem}
\|\varphi_1-\varphi(\dt)\|_{H^{s+2}} \leq c \dt \left(\|\Upsilon_{-1/2}-\varphi(-\dt/2)\|_{H^{s+4}}+\dt^2\right),
\end{equation} 
where $c$ does not depend on $\dt \in (0,\dt_0)$.

It remains to estimate the $H^{s+2}$-norm of the sixth and seventh components of the vector $X_0-X(0)$. We only give the details for the seventh term since the computation is similar and even simpler for the sixth term. Let us denote by $p(\dt)$ the consistency error defined as 
$$p(\dt)=\dfrac{\varphi(\dt)-\varphi_0}{\dt}-\partial_t \varphi\left(\dfrac{\dt}{2}\right). $$
Using a Taylor expansion, since the exact solution is a smooth function from $(-\dt_0,\dt_0)$ to $H^{s+2}$, we have 
\begin{equation}
\label{eq:estim9lem}
\|p(\dt)\|_{H^{s+2}} \leq c \dt^2.
\end{equation}
Let us denote by $q(\dt)=v_{1/2}-\partial_t \varphi \left(\dfrac{\dt}{2}\right)$ the seventh component of $X_0-X(0)$. We have 
$$q(\dt)-p(\dt)=\dfrac{\varphi_1 - \varphi(\dt)}{\dt}.$$
Using estimates \eqref{eq:estim8lem} and \eqref{eq:estim9lem} we have by triangle inequality, 
$$\|q(\dt)\|_{H^{s+2}} \leq \|p(\dt)\|_{H^{s+2}} + \dfrac{1}{\dt}\left\|\varphi_1 - \varphi(\dt)\right\|_{H^{s+2}}\leq c\dt^2 + c  \left(\|\Upsilon_{-1/2}-\varphi(-\dt/2)\|_{H^{s+4}}+\dt^2\right).$$
This concludes the proof of the lemma.
\end{proof}
We prove below that the relaxation method \eqref{eq:relaxclassic}
is of order 2.
\begin{theorem}\label{th:machinbidule}
  Assume $d$, $s$, $R_1>0$, $\varphi_{\text{in}}\in H^{s+4}(\R^d)$, $T<T^\star$ are given and
  satisfy Hypotheses \ref{hypobornes}.
There exists $\mathscr{C}>0$ and $\delta t_0>0$ (smaller than the one in the hypotheses)
such that for all $\Upsilon_{-1/2}\in H^{s+4}(\R^d)$ with
$\|\Upsilon_{-1/2}\|_{H^{s+4}(\R^d)}\leq R_1$, all
$n\in \mathbb{N}$ and all $\dt \in (0,\dt_0)$ with
$n\dt \leq T$,
  \begin{equation}
    \label{eq:1}
\| X_n-X(t_n)\|_{(H^s(\R^d))^7}     \leq \mathscr{C} \left ( \|\Upsilon_{-1/2}-|\varphi(-\dt/2)|^2\|_{H^{s+4}(\R^d)}+\dt^2 \right).
  \end{equation}
\end{theorem}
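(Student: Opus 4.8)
The plan is to run a discrete variation-of-constants (Duhamel) argument on the compact recursion \eqref{eq:relextsimpl}, turning the preliminary lemmas into the stability and consistency ingredients of a discrete Gronwall estimate. Writing $e_n=X_n-X(t_n)$ for the global error, with $X(t)$ the exact vector \eqref{eq:defsolex}, I would first record that the exact solution satisfies the scheme up to a local defect $\delta_n$,
\[
X(t_{n+1})=\mathcal{B}X(t_n)+\dt\,\mathcal{C}\,\mathcal{M}(X(t_n),X(t_{n+1}))+\delta_n,
\]
so that subtracting \eqref{eq:relextsimpl} and iterating from $n=0$ gives
\[
e_n=\mathcal{B}^n e_0+\sum_{k=0}^{n-1}\mathcal{B}^{\,n-1-k}\Big(\dt\,\mathcal{C}\big[\mathcal{M}(X_k,X_{k+1})-\mathcal{M}(X(t_k),X(t_{k+1}))\big]-\delta_k\Big).
\]

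The decisive point is that $\mathcal{B}$ is \emph{not} power bounded, so no contribution can be estimated by a naive bound on $\vvvert\mathcal{B}^m\vvvert$; each must be routed through a structural fact that does hold uniformly. For the Lipschitz term I would invoke Lemma \ref{lem:puissancebloc}: since $\mathcal{C}$ sits to the right of $\mathcal{B}^{n-1-k}$, one has $\vvvert\mathcal{B}^{n-1-k}\mathcal{C}\vvvert\le b$, and Lemma \ref{lem:method} then bounds the corresponding $(H^s)^7$-norm by $bC\dt(\|e_k\|+\|e_{k+1}\|)$. For the initial term $\mathcal{B}^n e_0$, the first five components are harmless because the blocks $\mathcal{B}_1,\mathcal{B}_2$ have bounded powers; the last two carry the unbounded block $\mathcal{B}_3$, whose symbol grows only like $\dt|\xi|^2$ (the growth of $(I+A)^{-1}$), hence loses at most two derivatives uniformly in $n$ and $\dt$: $\|\mathcal{B}^n e_0\|_{(H^s)^7}\le C\|e_0\|_{(H^{s+2})^7}$. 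This is exactly why Lemma \ref{lem:estiminit} is phrased in $H^{s+2}$, and it yields $\|\mathcal{B}^n e_0\|_{(H^s)^7}\le C(\|\Upsilon_{-1/2}-|\varphi(-\dt/2)|^2\|_{H^{s+4}}+\dt^2)$.

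The heart of the matter is the consistency defect $\delta_n$, and this is the step I expect to be the main obstacle. Its rows $1,3,4,6$ vanish identically (they are exact shifts of \eqref{systran}), while rows $2,5,7$ are the truncation errors of the three evolution equations in \eqref{eq:equivrelax}. The subtlety is that the seventh component is acted on by the unbounded block $\mathcal{B}_3$, so a plain $O(\dt^3)$ bound on $\delta_n$ in $(H^s)^7$ is useless. I would instead show that $\delta_n$ factors as $\delta_n=\mathcal{C}\eta_n$ with $\|\eta_n\|_{(H^s)^7}\le C\dt^3$: applying $B^{-1}$ to the fifth and seventh components and using $B^{-1}A=i+\dt\Delta/4$ and $B^{-1}(A-I)=\dt\Delta/2$ (immediate from the definitions of $A$ and $B$) recasts each defect as $2\dt$ times the residual of the corresponding equation of \eqref{eq:equivrelax} at the midpoint, whose leading terms cancel by the equations themselves. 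The surviving Taylor remainders are genuine $O(\dt^3)$ quantities in $H^s$, provided $\varphi$, and hence $v=\partial_t\varphi$, is smooth into $H^{s+4}$ — precisely the regularity granted by Hypotheses \ref{hypobornes}. With this factorization, $\mathcal{B}^{n-1-k}\delta_k=\mathcal{B}^{n-1-k}\mathcal{C}\eta_k$ is again controlled by Lemma \ref{lem:puissancebloc}, contributing $\sum_k b\|\eta_k\|\le bCT\dt^2$ after summation.

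Finally I would assemble the estimate. Setting $\epsilon_n=\|e_n\|_{(H^s)^7}$ and $\mathcal{E}_0=\|\Upsilon_{-1/2}-|\varphi(-\dt/2)|^2\|_{H^{s+4}}+\dt^2$, the three bounds above give $\epsilon_n\le C\mathcal{E}_0+2bC\dt\sum_{k=0}^{n}\epsilon_k$, where $n\dt\le T$ and $\dt^2\le\mathcal{E}_0$ have been used to absorb the consistency sum into $C\mathcal{E}_0$. For $\dt_0$ small enough that $2bC\dt<1$, the endpoint term $\epsilon_n$ on the right is absorbed, and the discrete Gronwall lemma yields $\epsilon_n\le\mathscr{C}\,\mathcal{E}_0$ with $\mathscr{C}=\tilde Ce^{\tilde CT}$, which is exactly \eqref{eq:1}.
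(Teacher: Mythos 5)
Your proposal is correct and follows essentially the same route as the paper: the same error recursion for $e_n=X_n-X(t_n)$, stability via Lemma \ref{lem:puissancebloc} combined with the Lipschitz bound of Lemma \ref{lem:method}, the initial error controlled in $H^{s+2}$ by Lemma \ref{lem:estiminit}, an $O(\dt^3)$ consistency defect, and a discrete Gronwall argument after absorbing the endpoint term for small $\dt$. Your factorization $\delta_n=\mathcal{C}\eta_n$ and your direct bound $\vvvert\mathcal{B}^n\vvvert_{(H^{s+2})^7\to(H^s)^7}\leq C$ are just mild repackagings of the paper's device of inserting $\mathcal{C}\mathcal{C}^{-1}$ and measuring $e_0$ and $R_k(\dt)$ in the $(H^{s+2})^7$-norm, so the two proofs coincide in substance.
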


\begin{proof}
  First, the initial datum $X_0$ is computed from $\varphi_0=\varphi_{\text{in}}$ and
  $\Upsilon_{-1/2}$ using \eqref{eq:IDrelext}. Therefore, using Lemma \ref{lem:estiminit}, there
  exists a constant $c>0$ such that for all $\dt\in(0,\dt_0)$ and all $\|\Upsilon_{-1/2}\|_{H^{s+4}(\R^d)}\leq R_1$, we have the estimate \eqref{eq:estimDI}
\begin{equation*}
  \|X_0-X(0)\|_{(H^{s+2}(\R^d))^7} \leq
c \left(\|\Upsilon_{-1/2}-|\varphi(-\dt/2)|^2\|_{H^{s+4}(\R^d)} +\dt^2\right).
\end{equation*}
Second, we define the consistency error of the relaxation scheme at
time $t_n=n\dt\leq T$ by the formula
\begin{equation*}
  R_n(\dt) =\mathcal{B} X(t_n) + \dt \mathcal{C} \mathcal{M}(X(t_n),X(t_{n+1})) - X(t_{n+1}).
\end{equation*}
Substracting this definition from \eqref{eq:relextsimpl}, we obtain
\begin{equation}
\label{eq:errorrecursion}
  X_{n+1}-X(t_{n+1})=\mathcal{B} (X_n-X(t_n))+
 \dt\mathcal{C}\left(\mathcal{M}(X_n,X_{n+1})-\mathcal M(X(t_n),X(t_{n+1}))\right)
 +R_n(\dt).
\end{equation}
From now on, we set for all $n\in\N$ such that $n\dt\leq T$, $e_n=X_n-X(t_n)$.
Iterating the relation above, we obtain, as long as $n\dt\leq T$,
\begin{equation*}
  e_{n} = {\mathcal B}^n e_0 + \dt
  \sum_{k=0}^{n-1} \mathcal B^{n-k-1}
\mathcal{C} \left(\mathcal{M}(X_k,X_{k+1})-\mathcal M(X(t_k),X(t_{k+1}))\right)
  + \sum_{k=0}^{n-1} \mathcal B^{n-k-1} R_{k}(\dt).
\end{equation*}
This implies 
\begin{equation*}
  \begin{array}{rcl}
\displaystyle  \|e_{n}\|_{(H^s(\R^d))^7} &\leq & \displaystyle \|{\mathcal B}^n \mathcal{C}\mathcal{C}^{-1}e_0\|_{(H^s(\R^d))^7} \\
&&\displaystyle+ \dt
  \sum_{k=0}^{n-1} \|\mathcal B^{n-k-1}
\mathcal{C} \left(\mathcal{M}(X_k,X_{k+1})-\mathcal M(X(t_k),X(t_{k+1}))\right)\|_{(H^s(\R^d))^7}\\
&&\displaystyle
  + \sum_{k=0}^{n-1}\| \mathcal B^{n-k-1} \mathcal{C}\mathcal{C}^{-1}R_{k}(\dt)\|_{(H^s(\R^d))^7}\\
&\leq & \displaystyle b \|\mathcal{C}^{-1}e_0\|_{(H^s(\R^d))^7}\\
&&\displaystyle + \dt
  b\sum_{k=0}^{n-1} \|\left(\mathcal{M}(X_k,X_{k+1})-\mathcal M(X(t_k),X(t_{k+1}))\right)\|_{(H^s(\R^d))^7}\\
&&\displaystyle
  + b \sum_{k=0}^{n-1}\| \mathcal{C}^{-1}R_{k}(\dt)\|_{(H^s(\R^d))^7}\\
&\leq & \displaystyle b \|e_0\|_{(H^{s+2}(\R^d))^7}\\
&&\displaystyle + 
C\dt b\sum_{k=0}^{n-1} \left ( \|e_k\|_{(H^s(\R^d))^7}+\|e_{k+1}\|_{(H^s(\R^d))^7}\right )
\\
&&\displaystyle
  + b \sum_{k=0}^{n-1}\| R_{k}(\dt)\|_{(H^{s+2}(\R^d))^7},\\
  \end{array}
\end{equation*}
using Lemmas \ref{lem:method} and \ref{lem:puissancebloc}. 
Then 
\begin{equation*}
\displaystyle{  (1-C\delta t b)\|e_{n}\|_{(H^s(\R^d))^7} \leq  b \|e_0\|_{(H^{s+2}(\R^d))^7} + 2
C\dt b\sum_{k=0}^{n-1} \|e_k\|_{(H^s(\R^d))^7}  + b \sum_{k=0}^{n-1}\| R_{k}(\dt)\|_{(H^{s+2}(\R^d))^7}.}
\end{equation*}
Let $\delta t$ be small enough to ensure that
$1-C\delta t b \geq \dfrac{1}{2}$. This implies 
\begin{equation*}
\displaystyle{  \|e_{n}\|_{(H^s(\R^d))^7} \leq  2b \|e_0\|_{(H^{s+2}(\R^d))^7} + 4
C\dt b\sum_{k=0}^{n-1} \|e_k\|_{(H^s(\R^d))^7}  + 2b \sum_{k=0}^{n-1}\| R_{k}(\dt)\|_{(H^{s+2}(\R^d))^7}.}
\end{equation*}
Taylor expansions and the fact that one can differentiate the last two lines of \eqref{eq:equivrelax} with respect to time show that there exists a constant $Q$ such that for all $n$ and $\dt$  with $n \dt \leq T$
\begin{equation*}
  \| R_{k}(\dt)\|_{(H^{s+2}(\R^d))^7}\leq Q \dt^3.
\end{equation*}
This implies that
$2b  \sum_{k=0}^{n-1}\| R_{k}(\dt)\|_{(H^{s+2}(\R^d))^7} \leq 2bQ T \dt^2$.
Then we obtain
\begin{equation*}
\displaystyle{  \|e_{n}\|_{(H^s(\R^d))^7} \leq  2b \|e_0\|_{(H^{s+2}(\R^d))^7}+ 2bQ T \dt^2+ 4
C\dt b\sum_{k=0}^{n-1} \|e_k\|_{(H^s(\R^d))^7}.}
\end{equation*}
Using a discrete Gronwall Lemma
(see section 5 in \cite{Holte09}), we get
\begin{equation*}
\displaystyle{  \|e_{n}\|_{(H^s(\R^d))^7}
\leq  (2b \|e_0\|_{(H^{s+2}(\R^d))^7}+ 2bQ T \dt^2)\exp(4CT b)}.
\end{equation*}
This estimate and \eqref{eq:estimDI} prove the result.
\end{proof}

\section{The generalized relaxation method for general nonlinearities}
\label{sec:genrelax}

\subsection{Generalized relaxation method for NLS equation}
We start by considering the simplified equation \eqref{eq:GPE}
with zero potential ($V=0$), no convolution operator ($U=0$) and
without rotation ($\Omega=0$). Assuming $\sigma\in\N^\star$,
we are therefore dealing with the classical nonlinear Schr\"odinger equation
\begin{equation}
  \label{eq:NLS}
  i\partial_t\varphi(t,x) = -\frac{1}{2} \Delta \varphi(t,x)
+ \beta |\varphi(t,x)|^{2\sigma} \varphi(t,x),
\end{equation}
with initial datum $\varphi_{\text{in}}$.

The original relaxation method applied to \eqref{eq:NLS} would
consists in adding the variable $\Upsilon$ to \eqref{eq:NLS} and
discretizing the following continuous system as discrete times $t_n$
and $t_{n+1/2}$
  \begin{equation}
  \left \{
\begin{array}{l}
  \Upsilon(t,x) = |\varphi(t,x)|^{2\sigma},\\
\displaystyle   i\partial_t\varphi(t,x) = -\frac{1}{2} \Delta \varphi(t,x)+ \beta \Upsilon \varphi(t,x).
\end{array}\right .\label{eq:false_relax}
\end{equation}
It is however known to not conserve energy functional. 

\noindent
As a generalization of the classical relaxation method \eqref{eq:relaxclassic},
which is designed for the special cubic case ($\sigma=1$),
we propose the following method, which allows for
a general nonlinearity exponent $\sigma\in\N^\star$. We propose to
substitute system \eqref{eq:false_relax} by
\begin{equation}
  \label{eq:cont_relax_gen}
\left \{
\begin{array}{l}
  {\gamma^\sigma(t,x)} = |\varphi(t,x)|^{2\sigma},\\
\displaystyle   i\partial_t\varphi(t,x) = -\frac{1}{2} \Delta \varphi(t,x)+ \beta {\gamma^\sigma} \varphi(t,x).
\end{array}\right .  
\end{equation}
The modification seems ligth but allows to build an energy preserving
scheme. The second equation is approximated to second order at time
$t_{n+1/2}$ by
\[
      \displaystyle i\frac{\varphi_{n+1}-\varphi_n}{\dt}
 = \left( -\frac{1}{2} \Delta 
+ \beta \gamma_{n+1/2}^\sigma
\right)\frac{\varphi_{n+1}+\varphi_n}{2}.
\]
We now want to find an approximation of $\gamma^\sigma =
|\varphi|^{2\sigma}=\gamma^{\sigma-1}|\varphi|^2$ that allow energy
conservation following the ideas that were presented in section
\ref{sec:pres}. As for the classical relaxation method, we note that
\[\mathrm{Re}\left( \gamma_{n+1/2}^\sigma
  \frac{\varphi_{n+1}+\varphi_n}{2}\overline{\varphi_{n+1}-\varphi_n}\right)=\frac{\gamma_{n+1/2}^\sigma}{2}\Big(|\varphi_{n+1}|^2-|\varphi_{n}|^2\Big).\]
The last term also reads
\[
\begin{array}{ll}
  \displaystyle \gamma_{n+1/2}^\sigma\Big(|\varphi_{n+1}|^2-|\varphi_{n}|^2\Big)& \displaystyle =\gamma_{n+1/2}^\sigma\Big(|\varphi_{n+1}|^2-|\varphi_{n}|^2\Big)+\gamma_{n-1/2}^\sigma\Big(|\varphi_{n}|^2-|\varphi_{n}|^2\Big) \\
  & = \displaystyle \Big(\gamma^\sigma_{n+1/2}|\varphi_{n+1}|^2-\gamma^\sigma_{n-1/2}|\varphi_{n}|^2\Big)-{\Big(\gamma^\sigma_{n+1/2} -\gamma^\sigma_{n-1/2}\Big)|\varphi_{n}|^2}.
\end{array}
\]
The only choice that allow to preserve energy is to choose
\[
\Big(\gamma^\sigma_{n+1/2} -\gamma^\sigma_{n-1/2}\Big)|\varphi_{n}|^2
= \frac{\sigma}{\sigma+1} \Big (
\gamma_{n+1/2}^{\sigma+1}-\gamma_{n-1/2}^{\sigma+1}\Big ).\]
Moreover, we remark that
\[
\frac{1}{\sigma+1} \frac{
  \gamma_{n+1/2}^{\sigma+1}-\gamma_{n-1/2}^{\sigma+1}}{\delta
  t}=\frac{1}{\sigma} \frac{\gamma^\sigma_{n+1/2}
  -\gamma^\sigma_{n-1/2}}{\delta t}|\varphi_{n}|^2
\]
is a second order approximation of
$\gamma^\sigma=\gamma^{\sigma-1}|\varphi|^2$ at time $t=t_n$.

This method is therefore designed so that it preserves exactly the following energy
\begin{equation*}
  E_{\mathrm{rlx}}(\varphi,\gamma) = \frac{1}{4} \int_{\R^d} \|\nabla \varphi\|^2 \dd x
+ \frac{\beta}{2} \int{\R^d} \gamma^\sigma |\varphi|^2 \dd x
- \beta \frac{\sigma}{2(\sigma+1)} \int{\R^d} \gamma^{\sigma+1} \dd x.
\end{equation*}
Since at continuous level $\gamma^\sigma = |\varphi|^{2\sigma}$,
$E_{\mathrm{rlx}}(\varphi,\gamma)$ reduces to the true energy \[\frac{1}{4} \int_{\R^d} \|\nabla \varphi\|^2 \, \dd x
+ \frac{\beta}{2\sigma+2} \int_{\R^d} |\varphi|^{2\sigma+2}\, \dd x.\]
Moreover, the generalized relaxation method preserves the $L^2$-norm
of the solution. We present in Theorem \ref{theoremNRJpreservation} a
more general method, which includes possibly
non zero other terms in the equation (see Section
\ref{subsec:relaxgen}).\\

Starting with $\varphi_0=\varphi_{\text{in}}$ and $\gamma_{-1/2}$ approximating
$|\varphi(-\dt/2)|^2$, the generalized relaxation method is
\begin{equation}
  \label{eq:relaxgena}
    \left\{
    \begin{array}{l}
      \displaystyle
\frac{\gamma_{n+1/2}^{\sigma+1}-\gamma_{n-1/2}^{\sigma+1}}{\gamma_{n+1/2}^{\sigma}-\gamma_{n-1/2}^{\sigma}}=\frac{\sigma + 1}{\sigma} |\varphi_n|^2,\\[5 mm]
      \displaystyle i\frac{\varphi_{n+1}-\varphi_n}{h}
 = \left( -\frac{1}{2} \Delta 
+ \beta \gamma_{n+1/2} ^\sigma
\right)\frac{\varphi_{n+1}+\varphi_n}{2}.
    \end{array}
  \right.
\end{equation}
Like for the Crank-Nicolson scheme and equation \eqref{eq:jesaispas}, the first equation also reads
\[
      \gamma_{n+1/2}^\sigma = \left(
\frac{\sigma+1}{\sigma} |\varphi_n|^2 - \gamma_{n-1/2}
\right)
\left(
\sum_{k=0}^{\sigma-1} \gamma_{n+1/2}^k \gamma_{n-1/2}^{\sigma-1-k}
\right).
\]
so we also have the other version of generalized relaxation method
\begin{equation}
\label{eq:relaxgen}
  \left\{
    \begin{array}{l}
      \displaystyle \gamma_{n+1/2}^\sigma = \left(
\frac{\sigma+1}{\sigma} |\varphi_n|^2 - \gamma_{n-1/2}
\right)
\left(
\sum_{k=0}^{\sigma-1} \gamma_{n+1/2}^k \gamma_{n-1/2}^{\sigma-1-k}
\right),\\[5 mm]
      \displaystyle i\frac{\varphi_{n+1}-\varphi_n}{\dt}
 = \left( -\frac{1}{2} \Delta 
+ \beta \gamma_{n+1/2} ^\sigma
\right)\frac{\varphi_{n+1}+\varphi_n}{2}.
    \end{array}
  \right.
\end{equation}
  
\noindent
Note that, when $\sigma=1$, the generalized relaxation method
\eqref{eq:relaxgen} reduces to the classical relaxation method
\eqref{eq:relaxclassic}.
In contrast to the classical relaxation method \eqref{eq:relaxclassic},
when $\sigma \geq 2$,
the generalized relaxation method \eqref{eq:relaxgen} is
fully implicit on its first stage, and linearly implicit in its second
stage. For small values of $\sigma$ ($\sigma=1, 2, 3 ,4$) the first
stage of \eqref{eq:relaxgen} is polynomial of degree $\sigma$ and
hence we can use explicit formulas for the computation of the solution
$\gamma_{n+1/2}$.
For example, for quintic nonlinearity and $\sigma=2$, we have explicit
solutions  to the quadratic equation
\[\gamma_{n+1/2}^2-k_1\gamma_{n+1/2}-k_1\gamma_{n-1/2}=0,\]
where $k_1=(3/2)|\varphi_n|^2-\gamma_{n-1/2}$.

For higher values of $\sigma$, one can use the
following iterative fixed-point procedure, 
starting with $\gamma_{n+1/2,0}=|\varphi_n|^2$ :
\begin{equation}
\label{eq:relaxiter}
\forall p\in\N,\qquad
\gamma_{n+1/2,{p+1}} = \left(
\frac{\sigma+1}{\sigma} |\varphi_n|^2 - \gamma_{n-1/2}
\right)^{1/\sigma}
\left(
\sum_{k=0}^{\sigma-1} \gamma_{n+1/2,p}^k \gamma_{n-1/2}^{\sigma-1-k}
\right)^{1/\sigma},  
\end{equation}
which one stops when $\|\gamma_{n+1/2,p+1}-\gamma_{n+1/2,p}\|_{L^2}$ is below
some small tolerance parameter, and for this index $p$, one sets
$\gamma_{n+1/2}=\gamma_{n+1/2,p+1}$.

Numerically, this generalized relaxation method has order 2,
as we will see in the numerical experiments section \ref{sec:num}.
However, we do not address this theoretical question in this paper.
In the next subsection, we show how one can design a generalized relaxation
method similar to \eqref{eq:relaxgen} in order to treat the cases
with non-zero potential $V$, non-zero convolution operator $U$
or non-zero rotation $\Omega$.

\subsection{A generalized relaxation method for GPE}
\label{subsec:relaxgen}

We propose the following generalization of the relaxation method introduced
in \cite{Besse2004}, which uses two additional unknowns $\gamma$ and $\Upsilon$:
starting from
$\varphi_0=\varphi_{\text{in}}$, we initialize $\Upsilon_{-1/2}$ and $\gamma_{-1/2}$
with approximations of $|\varphi(-\dt/2)|^2$
and compute for $n\in\N$,
$(\varphi_{n+1},\Upsilon_{n+1/2},\gamma_{n+1/2})$
from $(\varphi_n,\Upsilon_{n-1/2},\gamma_{n-1/2})$
using the formulae

\begin{equation}
\label{Frelaxation}
  \left\{
    \begin{array}{l}
      \displaystyle \gamma_{n+1/2}^\sigma = \left(
\frac{\sigma+1}{\sigma} |\varphi_n|^2 - \gamma_{n-1/2}
\right)
\left(
\sum_{k=0}^{\sigma-1} \gamma_{n+1/2}^k \gamma_{n-1/2}^{\sigma-1-k}
\right),\\[5 mm]
      \displaystyle\frac{\Upsilon_{n+1/2}+\Upsilon_{n-1/2}}{2} = |\varphi_n|^2,\\
      \displaystyle i\frac{\varphi_{n+1}-\varphi_n}{\dt}
 = \left( -\frac{1}{2} \Delta 
+ V 
+ \beta \gamma_{n+1/2}^\sigma 
+ \lambda (U\ast\Upsilon_{n+1/2}) 
- \Omega.R
\right)\frac{\varphi_{n+1}+\varphi_n}{2}.
    \end{array}
  \right.
\end{equation}

The first equation of \eqref{Frelaxation} is implicit and local.
In order to solve it, we propose two different approaches.
The first one, for small integer values of $\sigma$, consists in using
exact formulae, since the equation is polynomial of low degree in these cases.
The second approach, is to use a
fixed point method, as described in \eqref{eq:relaxiter}.
The second equation is explicit, as in the classical relaxation method
\eqref{eq:relaxclassic}.
The third equation of \eqref{Frelaxation} is implicit and non-local.
We propose again two different approaches to solve it.
The first approach, presented here for $x\in \T_\delta^d$, consists in using another fixed point iteration method:
one starts from $\varphi_{n+1}^0=\varphi_{n}$
and computes $\varphi_{n+1}^{p+1}$ from $\varphi_{n+1}^p$ using the
iterative procedure
\begin{eqnarray}
\label{eq:rlxpointfixeFourier}
\lefteqn{
  \left(1+i\frac{\dt}{2}\frac{\xi^2}{2}\right) {\mathcal F}(\varphi_{n+1}^{p+1})(\xi)} & \\ \nonumber
  =& \displaystyle
  \left(1-i\frac{\dt}{2}\frac{\xi^2}{2}\right) {\mathcal F}(\varphi_{n})(\xi)
  -i\dt{\mathcal F}\left(
   \left(V +\beta \gamma_{n+1/2}^\sigma  
+ \lambda (U\ast\Upsilon_{n+1/2}) 
- \Omega.R
\right)\left(\frac{\varphi_{n+1}^p+\varphi_n}{2}\right)\right)(\xi),
\end{eqnarray}
where $\mathcal F$ stands for the Fourier transform in space as defined
in Appendix \ref{sec:notation},
and we set $\varphi_{n+1}=\lim_{p\to \infty} \varphi_{n+1}^p$.
In practice, the iterative procedure
$\varphi_{n+1}^p\rightarrow \varphi_{n+1}^{p+1}$ stops
whenever the $L^2$-norm of the difference
between two consecutive steps is below some small tolerance parameter.
Note that we decided to implicit the Laplace operator in the
iterative procedure \eqref{eq:rlxpointfixeFourier} in order
to ensure that the Sobolev regularity of $\varphi_{n+1}^p$ does
no decrease {\it a priori} when $p$ increases.
Alternatively, the other approach to solve the last equation of \eqref{Frelaxation} consists in following \cite{XavRom2014},
and using the linearity of the equation.
Let us introduce the new unknown
$\varphi_{n+1/2}=(\varphi_{n+1}+\varphi_n)/2$, so that the equation reads
\begin{equation*}
\left (I - i\frac{\delta t}{2}\frac{\Delta}{2} + i\frac{\delta t}{2}
  V + i \frac{\delta t}{2} \beta
  \gamma_{n+1/2}^\sigma+i \frac{\delta t}{2} \lambda(U\ast
  \Upsilon_{n+1/2})- i\frac{\delta t}{2} \Omega \cdot
  R\right) \varphi_{n+1/2}=\varphi_n.
\end{equation*}
This equation can also be written as
\begin{equation}
  \label{eq:precondkryl}
\mathcal{L}\varphi_{n+1/2}=P^{-1}\varphi_n,  
\end{equation}
where
\begin{equation*}
\mathcal{L}=\left (I + i\frac{\delta t}{2}P^{-1}\left (  V + \beta
  \gamma_{n+1/2}^\sigma+ \lambda(U\ast
  \Upsilon_{n+1/2})- \Omega \cdot
  R\right )\right) ,
\end{equation*}
and
\begin{equation*}
P=\left(I -i\frac{\delta t}{2}\frac{\Delta}{2}\right).
\end{equation*}
The precondionning operator $P$ can be easily inverted in Fourier space
and the solution of \eqref{eq:precondkryl} can be obtained by a Krylov
method. Note that other choices of preconditionning operator are
possible (see \cite{XavRom2014}).

In the following, we shall use the notation
\begin{equation*}
  (\varphi_{n+1},\gamma_{n+1/2},\Upsilon_{n+1/2})
= \Phi_{\dt}^{\rm rlx} (\varphi_n,\gamma_{n-1/2},\Upsilon_{n-1/2}),
\end{equation*}
for the method \eqref{Frelaxation} above. 

\subsubsection{Energy preservation property for generalized relaxation methods} 
The generalized relaxation method \eqref{Frelaxation}
is designed to preserve exactly the following energy:
\begin{eqnarray}
\label{eq:NRJrlx}
  E_{\rm rlx}(\varphi,\gamma,\Upsilon) & = & \int_{\R^d} \left( \frac{1}{4} 
\|\nabla \varphi\|^2 
 + \frac{1}{2} V |\varphi|^2 
 + \frac{\beta}{2} \gamma^\sigma |\varphi|^2
 -\beta \frac{\sigma}{2(\sigma+1)} \gamma^{\sigma+1} \right) \dd x \\ \nonumber
 & + &\int_{\R^d} \left(\frac{\lambda}{2} (U \ast \Upsilon) |\varphi|^2
  - \frac{\lambda}{4} (U \ast \Upsilon) \Upsilon
  - \frac{\Omega}{2} \overline{\varphi} R \varphi \right)\dd x,
\end{eqnarray}
as we prove in Theorem \ref{theoremNRJpreservation}.
Note that this energy is consistent with the
energy \eqref{eq:NRJGPE} of equation \eqref{eq:GPE} in the sense that
\begin{equation*}
  E_{\rm rlx}(\varphi,|\varphi|^2,|\varphi|^2) = E(\varphi).
\end{equation*}

\begin{theorem}
\label{theoremNRJpreservation}
  The generalized relaxation method \eqref{Frelaxation} applied
to the equation \eqref{eq:GPE} with initial data $\varphi_0=\varphi_{\text{in}}$,
$\gamma_{-1/2}$ and $\Upsilon_{-1/2}$ preserves
exactly the $L^2$ norm and the energy functional $E_{\rm rlx}$ defined in
\eqref{eq:NRJrlx} in the sense that for all $n\in\N$ such that
a solution of \eqref{Frelaxation} is defined, we have
\begin{equation}
\label{eq:proptheoremNRJpreservation}
  \|\varphi_{n+1}\|^2 = \|\varphi_0\|^2 \qquad \text{and} \qquad
  E_{\rm rlx} (\varphi_{n+1},\gamma_{n+1/2},\Upsilon_{n+1/2})
= E_{\rm rlx} (\varphi_0,\gamma_{-1/2},\Upsilon_{-1/2}).
\end{equation}
\end{theorem}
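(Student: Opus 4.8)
The plan is to reproduce at the discrete level the two time-continuous computations giving conservation of mass and of the energy \eqref{eq:NRJGPE}, exploiting the algebraic structure built into the two auxiliary equations of \eqref{Frelaxation}. Throughout I would write $H_{n+1/2}=-\frac12\Delta+V+\beta\gamma_{n+1/2}^\sigma+\lambda(U\ast\Upsilon_{n+1/2})-\Omega.R$, so that the third equation of \eqref{Frelaxation} reads $i(\varphi_{n+1}-\varphi_n)/\dt=H_{n+1/2}(\varphi_{n+1}+\varphi_n)/2$. The single structural fact that drives everything is that $H_{n+1/2}$ is (formally) self-adjoint on $L^2(\R^d)$: multiplication by $V$, by the real-valued $\gamma_{n+1/2}^\sigma$, and by $\lambda(U\ast\Upsilon_{n+1/2})$ (real-valued since $\Upsilon_{n+1/2}$ is real and $U$ is real and symmetric) are self-adjoint, and so is $\Omega.R$.

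For the $L^2$ norm I would take the inner product of the third equation with $(\varphi_{n+1}+\varphi_n)/2$, integrate over $\R^d$ and take imaginary parts. Self-adjointness of $H_{n+1/2}$ makes the right-hand side real, while the imaginary part of the left-hand side equals $(\|\varphi_{n+1}\|^2-\|\varphi_n\|^2)/(2\dt)$; hence $\|\varphi_{n+1}\|=\|\varphi_n\|$, and the first claim follows by induction. For the energy, the discrete analogue of multiplying by $\overline{\partial_t\varphi}$ is to take the inner product of the third equation with $\varphi_{n+1}-\varphi_n$ and then the real part. The left-hand side becomes $\operatorname{Re}\big(i\|\varphi_{n+1}-\varphi_n\|^2/\dt\big)=0$, so it remains to show that
\[
\operatorname{Re}\int_{\R^d}\Big(H_{n+1/2}\tfrac{\varphi_{n+1}+\varphi_n}{2}\Big)\overline{(\varphi_{n+1}-\varphi_n)}\,dx
\]
equals the increment of $E_{\rm rlx}$ between steps $n$ and $n+1$.

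I would then treat the five contributions of $H_{n+1/2}$ separately. The Laplacian term, after one integration by parts, gives $\tfrac14(\|\nabla\varphi_{n+1}\|^2-\|\nabla\varphi_n\|^2)$, the cross terms being purely imaginary; the potential term gives $\tfrac12\int V(|\varphi_{n+1}|^2-|\varphi_n|^2)$ via the elementary identity $\operatorname{Re}\big[(\varphi_{n+1}+\varphi_n)\overline{(\varphi_{n+1}-\varphi_n)}\big]=|\varphi_{n+1}|^2-|\varphi_n|^2$; and the rotation term gives $-\tfrac{\Omega}{2}\big(\int\overline{\varphi_{n+1}}R\varphi_{n+1}-\int\overline{\varphi_n}R\varphi_n\big)$, again discarding the purely imaginary cross terms by self-adjointness of $\Omega.R$. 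Each matches exactly the corresponding increment in \eqref{eq:NRJrlx}.

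The two remaining terms carry the relaxation structure and are the heart of the matter. For the local nonlinearity I would use, as in the derivation preceding \eqref{eq:relaxgena}, the splitting
\[
\gamma_{n+1/2}^\sigma(|\varphi_{n+1}|^2-|\varphi_n|^2)=\big(\gamma_{n+1/2}^\sigma|\varphi_{n+1}|^2-\gamma_{n-1/2}^\sigma|\varphi_n|^2\big)-\big(\gamma_{n+1/2}^\sigma-\gamma_{n-1/2}^\sigma\big)|\varphi_n|^2,
\]
and then invoke the first equation of \eqref{Frelaxation}, rewritten via $a^\sigma-b^\sigma=(a-b)\sum_{k=0}^{\sigma-1}a^kb^{\sigma-1-k}$ into the form $(\gamma_{n+1/2}^\sigma-\gamma_{n-1/2}^\sigma)|\varphi_n|^2=\tfrac{\sigma}{\sigma+1}(\gamma_{n+1/2}^{\sigma+1}-\gamma_{n-1/2}^{\sigma+1})$; multiplied by $\beta/2$ this telescopes precisely into the increment of $\tfrac{\beta}{2}\gamma^\sigma|\varphi|^2-\beta\tfrac{\sigma}{2(\sigma+1)}\gamma^{\sigma+1}$. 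For the nonlocal term I would start from $\tfrac{\lambda}{2}(U\ast\Upsilon_{n+1/2})(|\varphi_{n+1}|^2-|\varphi_n|^2)$ and add and subtract $\tfrac\lambda2(U\ast\Upsilon_{n-1/2})|\varphi_n|^2$, producing a telescoping part $\tfrac\lambda2\big((U\ast\Upsilon_{n+1/2})|\varphi_{n+1}|^2-(U\ast\Upsilon_{n-1/2})|\varphi_n|^2\big)$ plus the remainder $-\tfrac\lambda2(U\ast(\Upsilon_{n+1/2}-\Upsilon_{n-1/2}))|\varphi_n|^2$. Substituting $|\varphi_n|^2=(\Upsilon_{n+1/2}+\Upsilon_{n-1/2})/2$ from the second equation of \eqref{Frelaxation} and using the symmetry of $U$ (so that $\int(U\ast f)g=\int(U\ast g)f$, which cancels the two cross terms) converts this remainder into exactly $-\tfrac\lambda4\big((U\ast\Upsilon_{n+1/2})\Upsilon_{n+1/2}-(U\ast\Upsilon_{n-1/2})\Upsilon_{n-1/2}\big)$, matching the increment of $\tfrac\lambda2(U\ast\Upsilon)|\varphi|^2-\tfrac\lambda4(U\ast\Upsilon)\Upsilon$. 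Summing the five contributions identifies the displayed real part with $E_{\rm rlx}(\varphi_{n+1},\gamma_{n+1/2},\Upsilon_{n+1/2})-E_{\rm rlx}(\varphi_n,\gamma_{n-1/2},\Upsilon_{n-1/2})$, which therefore vanishes; an induction on $n$ concludes. The main obstacle is precisely the nonlocal term, where the relaxation constraint on $\Upsilon$ and the symmetry of $U$ must be used simultaneously for the cross terms to cancel and the rest to telescope.
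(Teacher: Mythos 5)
Your proposal is correct and follows essentially the same route as the paper's own proof: pair the scheme with $\overline{\varphi_{n+1}-\varphi_n}$ and take real parts (resp.\ with $\overline{\varphi_{n+1}+\varphi_n}$ and take imaginary parts for the $L^2$ norm), then use the two auxiliary relaxation constraints, the symmetry of $U$, and the symmetry of $R$ to make each of the five terms telescope into the increment of $E_{\rm rlx}$. Your write-up merely makes explicit some details the paper leaves implicit, such as the cancellation of the cross terms $\int (U\ast\Upsilon_{n+1/2})\Upsilon_{n-1/2}\,dx=\int (U\ast\Upsilon_{n-1/2})\Upsilon_{n+1/2}\,dx$ in the nonlocal contribution.
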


\begin{proof}
  Multiplying the last equation of \eqref{Frelaxation} by
$\overline{\varphi_{n+1}-\varphi_n}$, integrating in space, and taking the real
part, one finds that a sum of 5 terms is equal to
\begin{equation}
\label{eq:presNRJ0}
 \int_{\R^d}\mathrm{Re} \left(
i |\varphi_{n+1}-\varphi_n|^2 
\right) dx=0.
\end{equation}
The first term reads
\begin{equation}
  \label{eq:rlxNRJcin}
\int_{\R^d}\mathrm{Re}\left(
-\frac{1}{2} \big(\Delta \frac{\varphi_{n+1}+\varphi_n}{2}\big)
\big(\overline{\varphi_{n+1}-\varphi_n} \big)
\right) dx= \frac{1}{4} \int_{\R^d} \|\nabla\varphi_{n+1}\|^2 dx - \frac{1}{4}
\int_{\R^d} \|\nabla \varphi_n\|^2 dx.
\end{equation}
The second term reads
\begin{equation}
  \label{eq:rlxNRJpot}
  \int_{\R^d}\mathrm{Re}\left(
 \big(V \frac{\varphi_{n+1}+\varphi_n}{2}\big)
\big(\overline{\varphi_{n+1}-\varphi_n} \big)
\right) dx= \frac{1}{2} \int_{\R^d} V |\varphi_{n+1}|^2 dx- \frac{1}{2}
\int_{\R^d} V |\varphi_n|^2dx.
\end{equation}
Using the first equation of \eqref{Frelaxation}, the third term reads
\begin{eqnarray}
  \label{eq:rlxNRJNL}
 \int_{\R^d} \mathrm{Re}\left(
\beta \gamma_{n+1/2}^\sigma \big( \frac{\varphi_{n+1}+\varphi_n}{2}\big)
\big(\overline{\varphi_{n+1}-\varphi_n} \big)
\right) dx & = &
\beta\left(\frac{1}{2} \int_{\R^d} \gamma_{n+1/2}^\sigma |\varphi_{n+1}|^2 dx
- \frac{\sigma}{2(\sigma+1)}
\int_{\R^d} \gamma_{n+1/2}^{\sigma+1} dx \right)  \nonumber\\ 
& - &
\beta\left(\frac{1}{2} \int_{\R^d} \gamma_{n-1/2}^\sigma |\varphi_{n}|^2  dx
- \frac{\sigma}{2(\sigma+1)}
\int_{\R^d}\gamma_{n-1/2}^{\sigma+1}dx \right)
.
\end{eqnarray}
Using the fact that the convolution kernel $U$ is real-valued and symmetric
with respect to the origin, and also using the second equation of \eqref{Frelaxation}, the fourth term reads
\begin{eqnarray}
\label{eq:rlxnonlocalNRJ}
\lefteqn{
  \lambda \int_{\R^d} \mathrm{Re}\Big( (U\ast\Upsilon_{n+1/2}) \frac{\varphi_n+\varphi_{n+1}}{2}
\overline{\varphi_{n+1}-\varphi_n}
\Big) dx} & & \\
& = & \lambda \Big(
\frac{1}{2} \int_{\R^d} (U\ast \Upsilon_{n+1/2}) |\varphi_{n+1}|^2 dx
- \frac{1}{4} \int_{\R^d} (U\ast \Upsilon_{n+1/2}) \Upsilon_{n+1/2} dx
\Big)\nonumber\\
& - & \lambda \Big(
\frac{1}{2} \int_{\R^d} (U\ast \Upsilon_{n-1/2}) |\varphi_{n}|^2 dx
- \frac{1}{4} \int_{\R^d} (U\ast \Upsilon_{n-1/2}) \Upsilon_{n-1/2} dx
\Big).\nonumber
\end{eqnarray}
Finally, thanks to the fact that the operator $R$ is symmetric, 
the last term reads
\begin{equation}
\label{eq:rlxNRJrot}
  -\Omega \int_{\R^d} \mathrm{Re} \left(
\Big(R\frac{\varphi_{n+1}+\varphi_n}{2}\Big) \overline{\varphi_{n+1}-\varphi_n}
\right)=
-\Omega\int_{\R^d} (\varphi_{n+1} R \overline{\varphi_{n+1}})
+ \Omega\int_{\R^d} (\varphi_{n} R \overline{\varphi_{n}}).
\end{equation}
The proof of the preservation of the energy
\eqref{eq:proptheoremNRJpreservation} is completed
by adding altogether \eqref{eq:rlxNRJcin}, \eqref{eq:rlxNRJpot},
\eqref{eq:rlxNRJNL}, \eqref{eq:rlxNRJNL}, and \eqref{eq:rlxNRJrot},
and using relation \eqref{eq:presNRJ0}.
The preservation of the $L^2$-norm follows from multiplying the
last relation in \eqref{Frelaxation} by $\overline{\varphi_{n+1}+\varphi_n}$,
integrating in space, and taking the imaginary part.
\end{proof}

\section{Numerical experiments}
\label{sec:num}
In this section, we make some numerical experiments and show that the
classical and generalized relaxation methods are efficient methods
that preserve mass and energy to machine epsilon.

\subsection{One dimensional example: the one-dimensional quintic and septic NLS equation}

We present in this subsection some numerical experiments to show the efficiency
of the generalized relaxation \eqref{eq:relaxgen} compared with the
Crank-Nicolson scheme \eqref{eq:CNmethod}
when considering
\begin{equation}
  \label{eq:gennls}
  i\partial_t\varphi(t,x) = -\frac{1}{2} \Delta \varphi(t,x)
+ \beta |\varphi(t,x)|^{2\sigma} \varphi(t,x), \quad \sigma=2,3,
\end{equation}
with $\varphi(0,x)=\varphi_{\mathrm{in}}(x)$ and $(t,x)\in
[0,T]\times (x_\ell,x_r)$. To deal with the space variable, we discretize
the space operators using Fourier  spectral approximation and consider
periodic boundary conditions. The spatial mesh size is
$\delta x >0$ with $\delta x=(x_r-x_\ell)/J$ with $J=2^P$, $P\in \N^*$. The
time step is $\delta t=T/N$ for some
$N\in\N^\star$. The grid points and the discrete times are 
\[
x_j:=x_\ell+j\delta x, \quad t_n:=n\delta t, \quad j=0,1,\cdots,J, \quad n=0,1,\cdots,N.
\]
Let $\varphi_{j,n}$ be the approximation of $\varphi(t_n,x_j)$ satisfying
\[
\varphi_{j,n} = \frac{1}{J} \sum_{k=0}^{J-1} \hat{\varphi}_{k,n} \omega_J^{jk}, \quad j=0,\cdots,J-1,
\]
where $\hat{\varphi}_{k,n}$ denotes the discrete Fourier transform of
sequence $(\varphi_{j,n})_j$ given by
\[
\hat{\varphi}_k^n=\sum_{q=0}^{J-1} \varphi_q^n \omega_J^{-jk}, \quad k=-\frac{J}{2},\cdots,\frac{J}{2}-1,
\]
where $\omega_J=\exp{(-2i\pi/J)}$. We also apply the discrete Fourier
transform to the approximation $\gamma_{j,{n+1/2}}$ of $|\varphi(t_{n+1/2},x_j)|^2$.
Let us define the discrete gradient operator $\nabla_d$
\[
\widehat{(\nabla_d v)}_k = i \mu_k  \hat{v}_k, \quad v\in \C^J.
\] 
Let us denote by $\Pi_d$ the projection operator
\[
\begin{array}{llcl}
\Pi_d: & \mathcal{C}^0([x_\ell,x_r],\C) & \to & \C^{J}\\
       & \varphi & \mapsto & \left (\varphi(x_j) \right )_{0\leq j \leq J-1}  
\end{array}.
\]
We define the discrete $\ell^r$ norm on $\C^J$ as
\[
\|v\|_{\ell^r} = \left (\delta x \sum_{j=0}^{J-1} |v_j|^r\right)^{1/r}, \quad v\in \C^J, \ 
r\geq 1,
\]
the mean
\[
M(v) = \delta x \sum_{j=0}^{J-1} v_j, \quad v\in \C^J, \ 
r\geq 1,
\]
and the discrete energies:
\[
E_{d}(v)=\frac{1}{4}\left \|\nabla_d v \right
\|_{\ell^2}^2+\frac{\beta}{2(\sigma+1)} { \left
    \||v|^{\sigma+1}\right \|_{\ell^2}^2},
\]
and
\[
E_{\mathrm{rlx},d}(v,g)=\frac{1}{4}\left \|\nabla_d v \right
\|_{\ell^2}^2+\frac{\beta}{2} M \left(
    g^\sigma(|v|^2-\frac{\sigma}{\sigma+1}g)\right).
\]
As in the continuous case, we have $E_d(v)=E_{\mathrm{rlx},d}(v,v)$
for any $v\in\C^J$. 

Using these definitions, the energy conservation for the relaxation scheme is built through the following relative error
\begin{equation}
  \label{eq:errener}
  \mathcal{E}_{E,\delta t}=\sup_{n\in\{0,\cdots,N\}} \frac{\left
    |E_{\mathrm{rlx},d}(\Pi_d(\varphi_{ex}(0,\cdot)),\Pi_d(\varphi_{ex}(-\delta
    t/2,\cdot)))-E_{\mathrm{rlx},d}((\varphi_j^n)_j,(\gamma_j^{n-1/2})_j)\right |}{E_{\mathrm{rlx},d}(\Pi_d(\varphi_{ex}(0,\cdot)),\Pi_d(\varphi_{ex}(-\delta
    t/2,\cdot)))}.
\end{equation}
For the Crank-Nicolson scheme, it is defined by
\begin{equation}
  \label{eq:errenercn}
  \mathcal{E}_{E,\delta t}=\sup_{n\in\{0,\cdots,N\}} \frac{\left
    |E_{d}(\Pi_d(\varphi_{ex}(0,\cdot)))-E_{d}((\varphi_j^n)_j)\right |}{E_{d}(\Pi_d(\varphi_{ex}(0,\cdot)))}.
\end{equation}

We present on Figure \ref{fig:un} the evolution of $\mathcal{E}_{E,\delta t}$ for various $\delta t$ when $\sigma=2$ and $\sigma=3$ for both classical and generalized relaxation scheme compared to Crank-Nicolson scheme. The initial datum is chosen to be $\varphi_{\mathrm{in}}(x)=\exp(-x^2)$ and $\beta=-1$. The time-space domain is $[0,1/2]\times[-30,30]$. We consider periodic boundary conditions and the interval $[-30,30]$ is meshed with $2^{13}+1$ nodes. As it is known, the standard relaxation scheme does not preserve the energy when $\sigma=2$ or $\sigma=3$ and the error curve show a second order convergence. On the other hand, both generalized relaxation and Crank-Nicolson preserve energy to epsilon machine.

\newlength\figureheight
\newlength\figurewidth
\setlength\figureheight{4cm}
\setlength\figurewidth{6cm}
\begin{figure}[htbp]
  \centering
  \begin{tabular}{cl}
\begin{minipage}{0.08\textwidth}$\sigma=2$,\end{minipage} & \begin{minipage}{0.7\textwidth}
%
%
\definecolor{mycolor1}{rgb}{0.00000,0.44700,0.74100}%
\definecolor{mycolor2}{rgb}{0.85000,0.32500,0.09800}%
\definecolor{mycolor3}{rgb}{0.92900,0.69400,0.12500}%
\begin{tikzpicture}

\begin{axis}[%
width=0.951\figurewidth,
height=\figureheight,
at={(0\figurewidth,0\figureheight)},
scale only axis,
xmode=log,
xmin=0.0001,
xmax=0.1,
xminorticks=true,
xlabel style={font=\color{white!15!black}},
xlabel={$\delta t$},
ymode=log,
ymin=1e-16,
ymax=0.01,
yminorticks=true,
ylabel style={font=\color{white!15!black}},
ylabel={$\mathcal{E}_{E,\delta t}$},
axis background/.style={fill=white},
legend style={at={(1.03,1)}, anchor=north west, legend cell align=left, align=left, draw=white!15!black}
]
\addplot [color=mycolor1, mark=*, mark options={solid, fill=mycolor1, mycolor1}]
  table[row sep=crcr]{%
0.0325714598997343	0.000142853291310846\\
0.0103	1.33891039657853e-05\\
0.00325714598997343	1.30793711552361e-06\\
0.00103	1.29823410724186e-07\\
0.000325714598997343	1.29517647847715e-08\\
0.000103	1.2943261910617e-09\\
};
\addlegendentry{Classical relaxation}

\addplot [color=mycolor2, mark=square*, mark options={solid, fill=mycolor2, mycolor2}]
  table[row sep=crcr]{%
0.0325714598997343	2.592250902952e-15\\
0.0103	7.48872758882481e-15\\
0.00325714598997343	5.76055970611882e-15\\
0.00103	8.64083955951626e-15\\
0.000325714598997343	9.44731791840814e-14\\
0.000103	1.55679126064013e-13\\
};
\addlegendentry{Generalized relaxation}

\addplot [color=mycolor3, mark=diamond*, mark options={solid, fill=mycolor3, mycolor3}]
  table[row sep=crcr]{%
0.0325714598997343	3.16830783849055e-15\\
0.0103	3.31232183114921e-15\\
0.00325714598997343	4.75246175773582e-15\\
0.00103	4.4644337724185e-15\\
0.000325714598997343	5.14129953791421e-14\\
0.000103	8.30960737640476e-14\\
};
\addlegendentry{Crank Nicolson}

\addplot [color=black]
  table[row sep=crcr]{%
0.0325714598997343	1e-06\\
0.0103	1e-07\\
};
\addlegendentry{Slope 2}

\end{axis}
\end{tikzpicture}
\begin{minipage}{0.08\textwidth}$\sigma=3$,\end{minipage} & \begin{minipage}{0.7\textwidth}
%
%
\definecolor{mycolor1}{rgb}{0.00000,0.44700,0.74100}%
\definecolor{mycolor2}{rgb}{0.85000,0.32500,0.09800}%
\definecolor{mycolor3}{rgb}{0.92900,0.69400,0.12500}%
\begin{tikzpicture}

\begin{axis}[%
width=0.951\figurewidth,
height=\figureheight,
at={(0\figurewidth,0\figureheight)},
scale only axis,
xmode=log,
xmin=0.0001,
xmax=0.1,
xminorticks=true,
xlabel style={font=\color{white!15!black}},
xlabel={$\delta t$},
ymode=log,
ymin=1e-16,
ymax=0.01,
yminorticks=true,
ylabel style={font=\color{white!15!black}},
ylabel={$\mathcal{E}_{E,\delta t}$},
axis background/.style={fill=white},
legend style={legend cell align=left, align=left, draw=white!15!black}
]
\addplot [color=mycolor1, mark=*, mark options={solid, fill=mycolor1, mycolor1}]
  table[row sep=crcr]{%
0.03143	0.000141658448933848\\
0.0101	1.38751713595483e-05\\
0.00325714598997343	1.41645710900191e-06\\
0.00103	1.40782159355194e-07\\
0.000325714598997343	1.40507954044484e-08\\
0.000103	1.40408371419412e-09\\
};

\addplot [color=mycolor2, mark=square*, mark options={solid, fill=mycolor2, mycolor2}]
  table[row sep=crcr]{%
0.03143	2.83465266125492e-15\\
0.0101	5.78742136374352e-15\\
0.00325714598997343	1.31102811922791e-14\\
0.00103	2.30315750702752e-14\\
0.000325714598997343	3.4370196643375e-14\\
0.000103	1.84843153769356e-13\\
};

\addplot [color=mycolor3, mark=diamond*, mark options={solid, fill=mycolor3, mycolor3}]
  table[row sep=crcr]{%
0.03143	1.65354897940638e-15\\
0.0101	3.8976511657436e-15\\
0.00325714598997343	6.37797463485317e-15\\
0.00103	1.27559492697063e-14\\
0.000325714598997343	2.04331409598074e-14\\
0.000103	9.93310494057687e-14\\
};

\addplot [color=black]
  table[row sep=crcr]{%
0.03143	1e-06\\
0.0101	1e-07\\
};

\end{axis}
\end{tikzpicture}
  \end{tabular}
  \caption{$\mathcal{E}_{E,\delta t}$ for relaxation schemes and Crank-Nicolson scheme when $\sigma=2$ and $\sigma=3$ }
  \label{fig:un}
\end{figure}
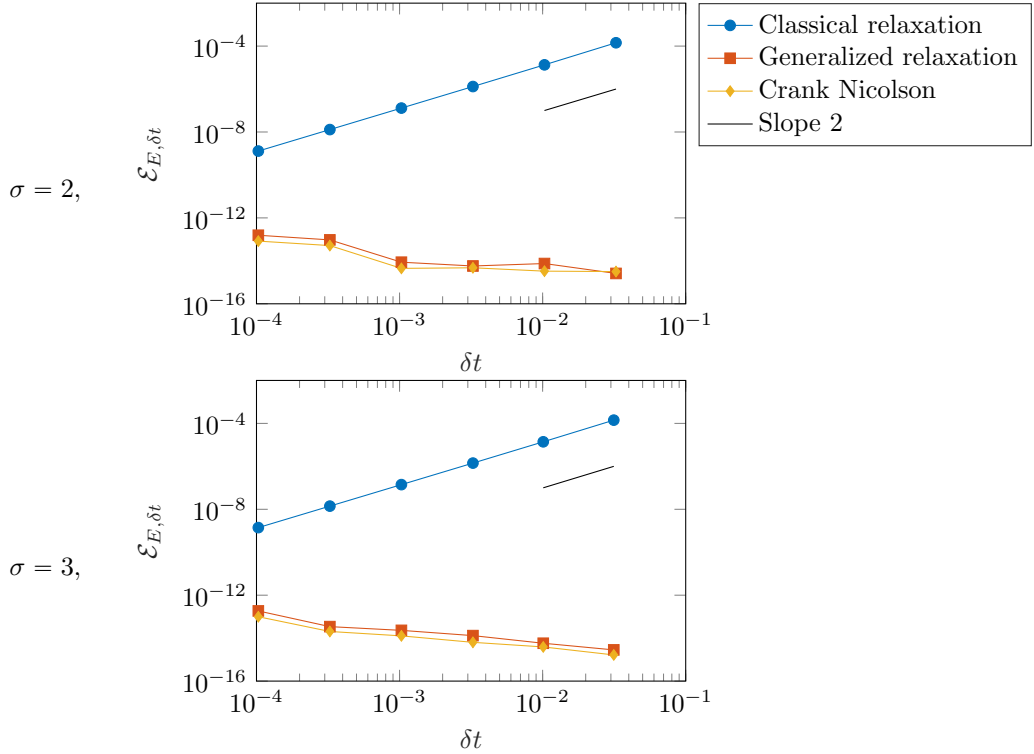

\subsection{A nonlocal Schr\"odinger equation with cubic-quintic nonlinearities}
In \cite{Krolikowski14}, Chen \textsl{et al.} investigate the interactions of dark solitons under competing nonlocal cubic and local quintic nonlinearities. They consider a 1D optical beam with an amplitude $\varphi(t,x)$ with competing nonlocal cubic and local quintic nonlinearities. Their model is given by the following nonlocal nonlinear Schr\"odinger equation
\begin{equation}
  \label{eq:cqnnls}
  \partial_t \varphi(t,x)=i \left ( \frac{1}{2} \partial_x^2 + \alpha_1 U \ast |\varphi(t,x)|^2 + \alpha_2 |\varphi(t,x)|^4 \right )\varphi(t,x),\quad \varphi(0,x)=\varphi_{\mathrm{in}},
\end{equation}
where $U(x)$ is the nonlocal kernel and $\alpha_1$ and $\alpha_2$ are real parameters. When $\alpha_j>0$, $j=1,2$, we are considering focusing nonlinearities, whereas when $\alpha_j<0$, the nonlinearities are defocusing.  This model is generalized in two dimensions in \cite{Shenetal2014} for the study of vortex solitons where $\partial_x^2$ is replaced by the two-dimensional Laplace operator. Typically, the kernel is regular and is given by
\begin{equation}
  \label{eq:kernel1}
  U_1(x)=\frac{\mathbbm{1}_{\{|x|\leq \mu\}} }{(2\mu)^d},\quad x\in \R^d,
\end{equation}
or
\begin{equation}
  \label{eq:kernel2}
  U_2(x)=\frac{1}{(\mu \sqrt{\pi})^d} \exp\left ( -|x|^2/\mu^2\right ),\quad x\in \R^d,
\end{equation}
the parameter $\mu$ allowing to control the width of the kernel, or in other words, the nonlocality strength. At the limit $\mu \to 0$, the kernel $U_j$, $j=1,2$, tends to a Dirac distribution and we recover a local nonlinear model.
The equation \eqref{eq:cqnnls} is associated to the energy
\begin{equation}
  \label{eq:3}
  E(\varphi)(t)=\frac12 \int_{\R}  \left ( \frac{1}{2}\|\nabla \varphi\|^2-\frac{\alpha_1}{2} (U\ast |\varphi|^2)|\varphi|^2 - \frac{\alpha_2}{3} |\varphi|^6\, \right )dx.
\end{equation}
We reproduce the  numerical experiments presented in \cite{Krolikowski14,Shenetal2014} and show the ability of the generalized relaxation method to preserve the energy \eqref{eq:NRJrlx} in the form \eqref{eq:nrj_grlx_nloc} below. The generalized relaxation method for \eqref{eq:cqnnls} consists in approximating the system of equations
\begin{equation}
  \label{eq:grl_nloc_cont}
  \left \{
  \begin{array}{l}
    \Upsilon=|\varphi|^2,\\[2mm]
    \gamma^2=|u|^4,\\[2mm]
\displaystyle    i\partial_t \varphi=\left (-\frac{1}{2} \Delta - \alpha_1 U \ast \Upsilon -\alpha_2 \gamma^2 \right ) \varphi,
  \end{array}
  \right .
\end{equation}
and the numerical scheme reads
\begin{equation}
  \label{eq:grl_nloc}
  \left \{
    \begin{array}{l}
      \Upsilon_{n+1/2}=2|\varphi_n|^2-\Upsilon_{n-1/2},\\[2mm]
      \displaystyle \gamma_{n+1/2}^2=\left ( \frac{3}{2} |\varphi_n|^2 -\gamma_{n-1/2}\right)(\gamma_{n-1/2}+\gamma_{n+1/2}),\\[3mm]
      \displaystyle i\frac{\varphi_{n+1}-\varphi_n}{\delta t}=-\frac{1}{2} \Delta \frac{\varphi_{n+1}-\varphi_n}{2} - \left (\alpha_1 U\ast \Upsilon_{n+1/2} +\alpha_2 \gamma_{n+1/2}^2 \right ) \frac{\varphi_{n+1}+\varphi_n}{2},
    \end{array}
  \right .
\end{equation}
with $\varphi_0(x)=\varphi_{\mathrm{in}}(x))$ and $\Upsilon_{-1/2}(x)=\gamma_{-1/2}(x)$ is some second order approximation of $\varphi(-\delta t/2,x)$. In our numerical experiments, this approximation is obtained by applying the Crank-Nicolson scheme starting from $\varphi_0$ on reverse time step $-\delta t/2$. The energy associated to \eqref{eq:grl_nloc_cont} is
\begin{equation}
  \label{eq:nrj_grlx_nloc}
  E_{\mathrm{rlx}}(\varphi,\gamma,\Upsilon) = \frac12 \int_{\R} \left(\frac{1}{2}\|\nabla \varphi\|^2 - \alpha_1 U\ast\Upsilon (|\varphi|^2 - \frac{\Upsilon}{2}) - \alpha_2 \gamma^2(|\varphi|^2-\frac{2}{3} \varphi)\right) \, dx
\end{equation}
and we have the conservation property \eqref{eq:proptheoremNRJpreservation}.

We are first interested in the one-dimensional case with kernel $U_1$ and we choose a defocusing nonlocal nonlinearity by considering $\alpha_1=-1$. Like in the previous subsection, the space variable is discretized using Fourier spectral approximation and we consider periodic boundary conditions. In order to avoid any interaction between the nonlocal kernel and the boundaries, we take a very large domain, typically $x\in [-256\pi,256\pi]$ discretized with $J=2^{14}+1$ nodes. The time step is $\delta t=5\cdot 10^{-3}$ and the final time is $T=30$. The initial datum is made of two solitons at a relative distance of $2x_0$, where $x_0=1$. Following \cite{Krolikowski14}, we choose
\[
\varphi_{\text{in}}(x)=\mathrm{tanh}(D(x-x_0))\mathrm{tanh}(D(x+x_0)),
\]
where $D$ is the positive root of the equation
\[
  \frac{\mathrm{coth(D\mu)}}{D\mu}\left [ \frac{1}{D^2}-\mu^2\mathrm{csch}^2(D\mu)\right]-\frac{11}{15}\frac{2\alpha_2}{3D^2}=\frac{1}{3}.
\]
We present results in Figure \ref{fig:nonlocA} and \ref{fig:nonlocB} for a defocusing cubic nonlinearity $\alpha_2=-0.5$ and a focusing one where $\alpha_2=0.1$. Two values for $\mu$ are proposed $\mu=0.5$ and $\mu=2.5$. The behavior of the defocusing-defocusing case is surprising for ``strong'' nonlocality $\mu=2.5$ since eventually the two solitons breathe.
\begin{figure}[h!]
  \centering
  \begin{tabular}{cc}
    \includegraphics[width=.48\textwidth]{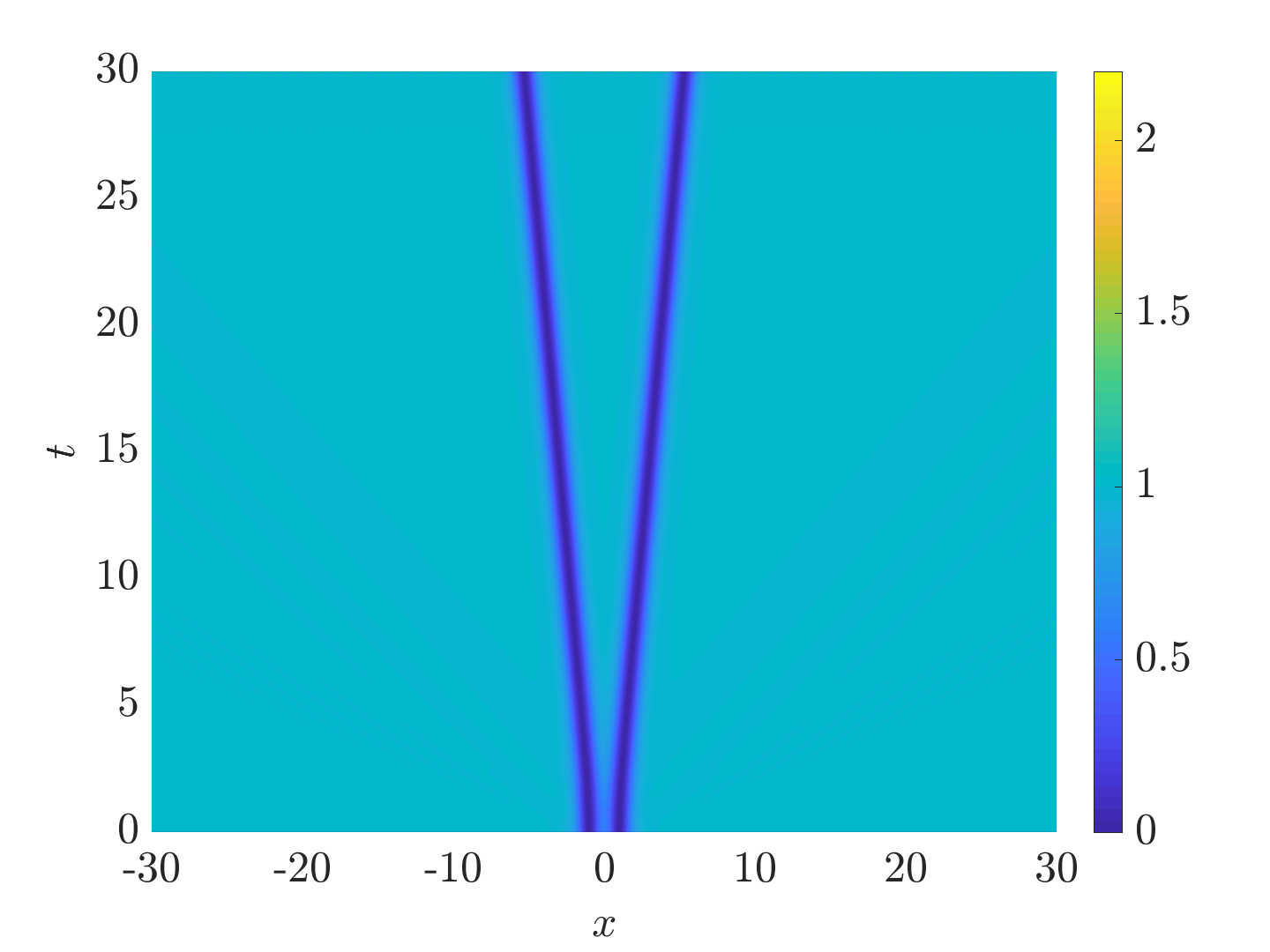}&    \includegraphics[width=.48\textwidth]{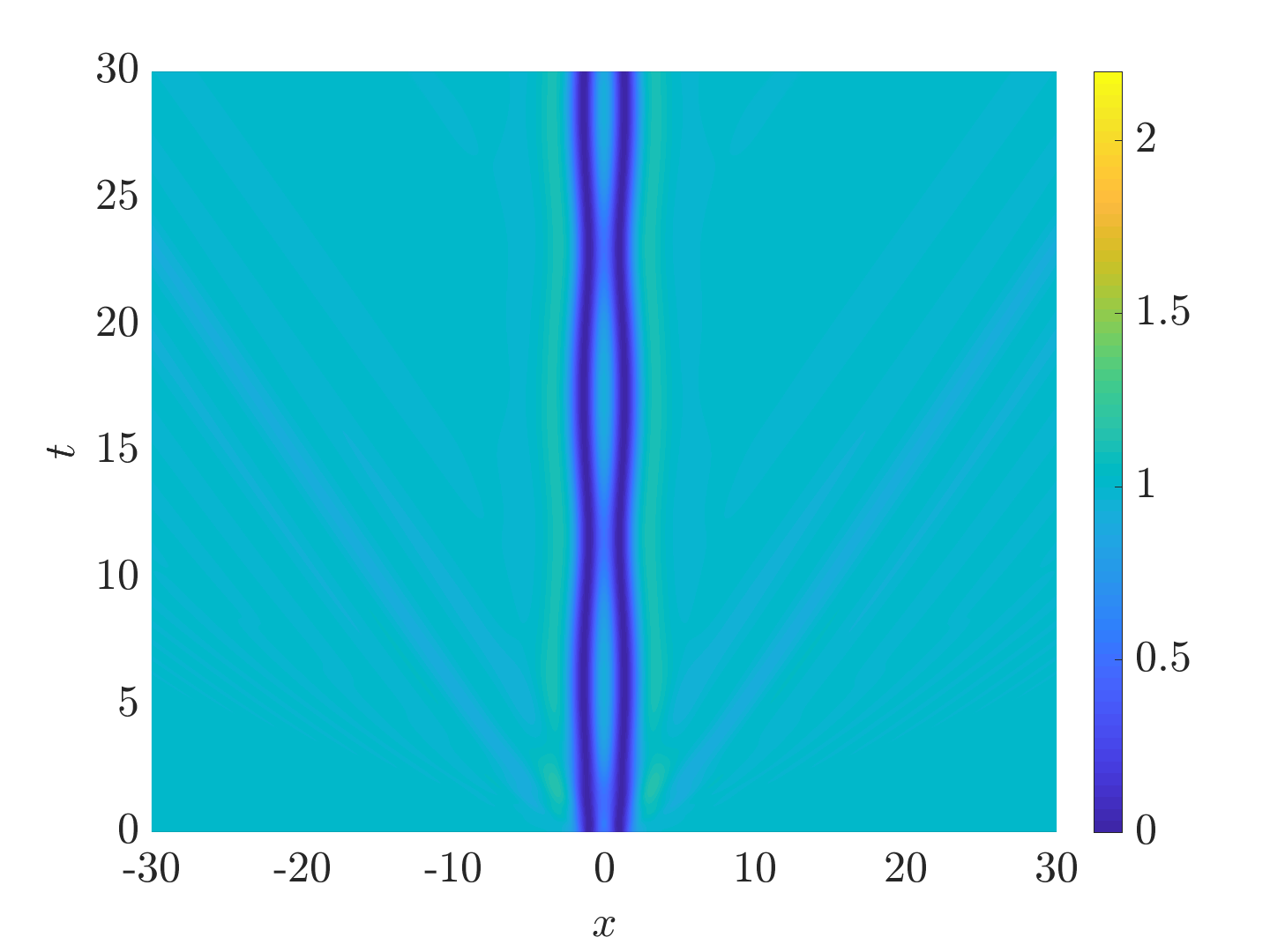} \\
    $\mu=0.5$ & $\mu=2.5$\\
  \end{tabular}
  \caption{Solutions $|\varphi_n|^2$ of the generalized relaxation method \eqref{eq:grl_nloc} applied to equation \eqref{eq:cqnnls} at time $T=30$ for $\alpha_2=-0.5$, $\mu=0.5$ and $\mu=2.5$.}
  \label{fig:nonlocA}
\end{figure}

\begin{figure}[h!]
  \centering
  \begin{tabular}{cc}
    \includegraphics[width=.48\textwidth]{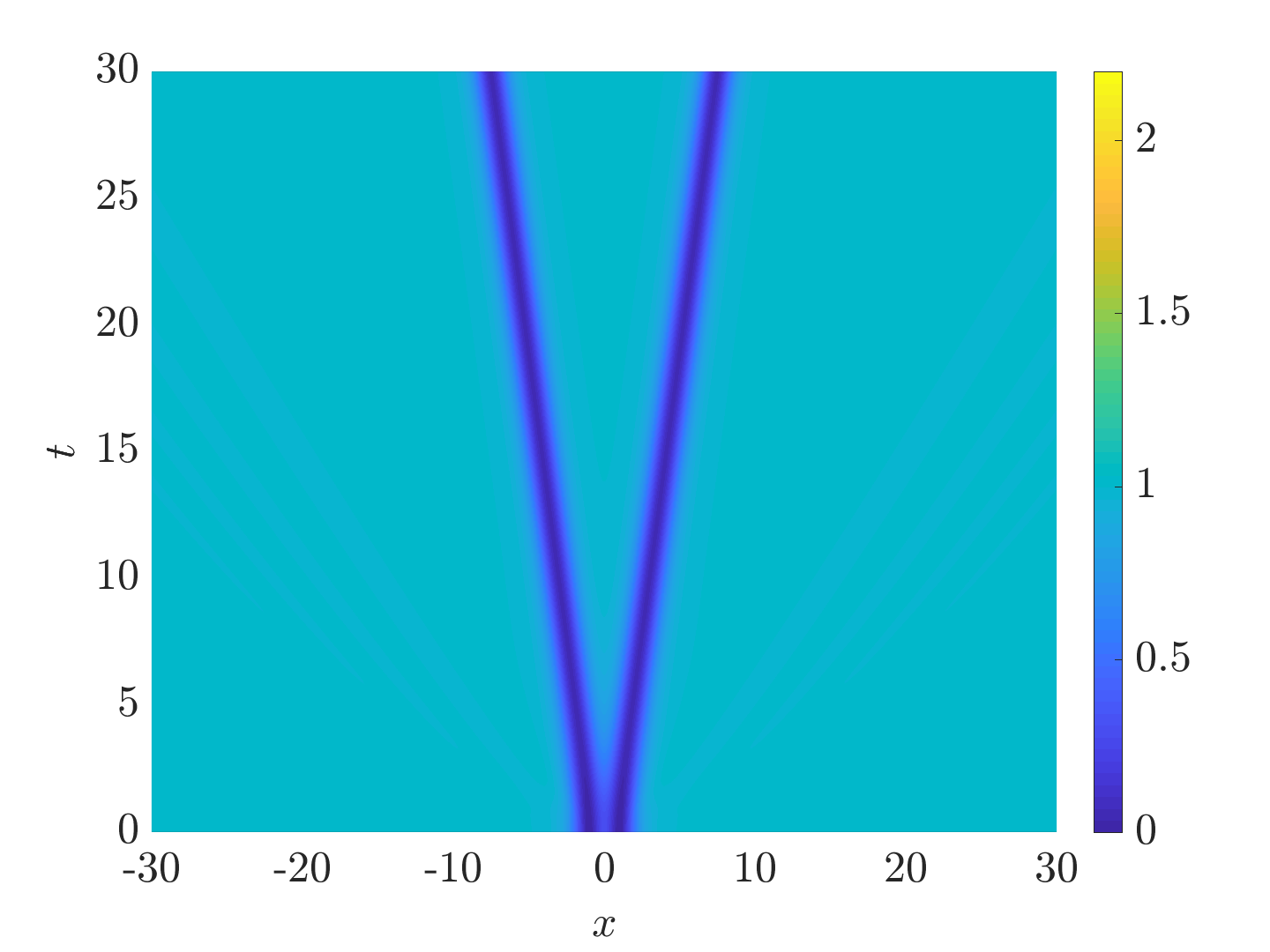}&    \includegraphics[width=.48\textwidth]{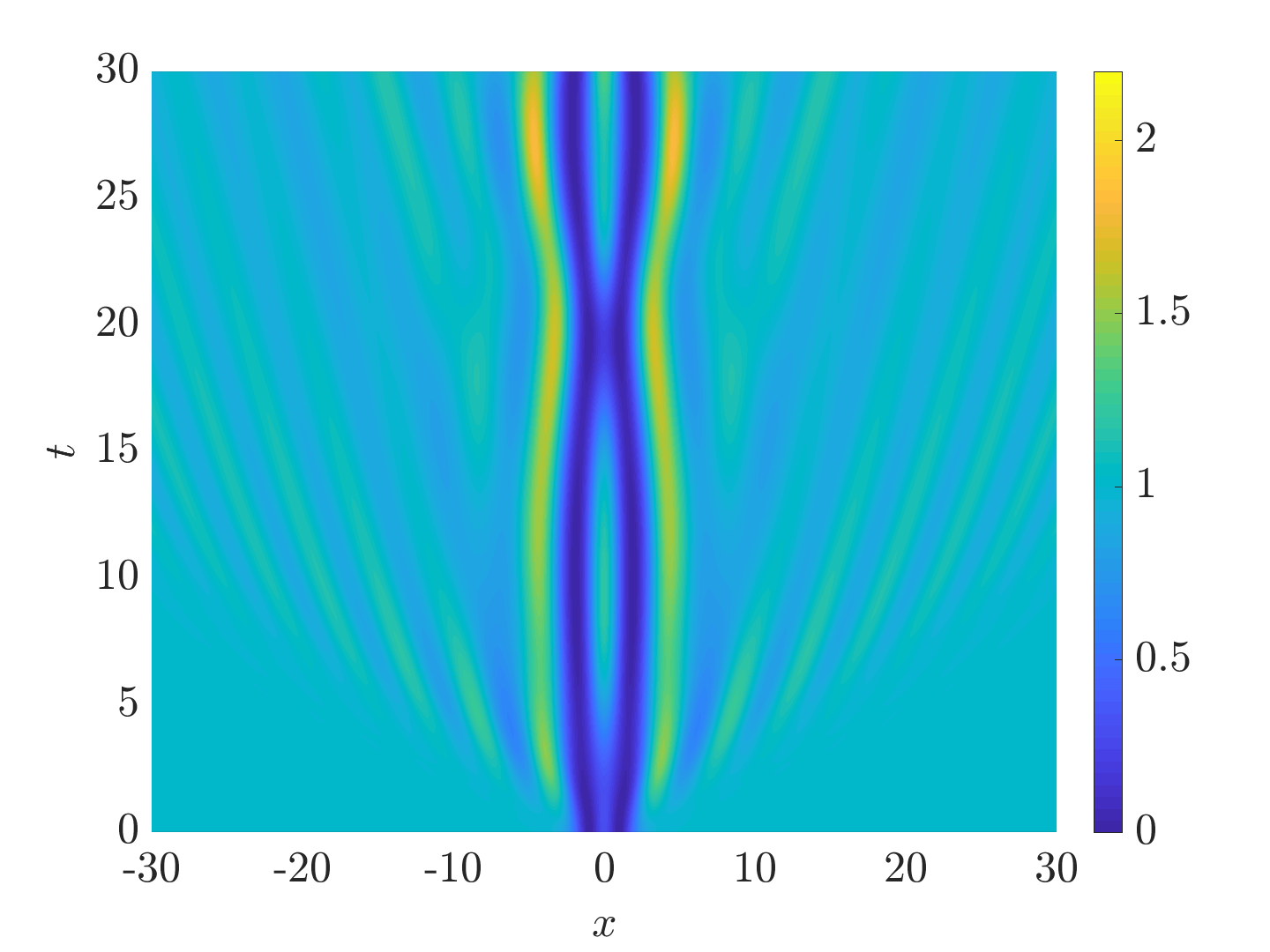} \\
    $\mu=0.5$ & $\mu=2.5$\\
  \end{tabular}
  \caption{Solutions $|\varphi_n|^2$ of the generalized relaxation method \eqref{eq:grl_nloc} applied to equation \eqref{eq:cqnnls} at time $T=30$ for $\alpha_2=0.1$, $\mu=0.5$ and $\mu=2.5$.}
  \label{fig:nonlocB}
\end{figure}
The evolution of the relative energy error
  \begin{equation}
\frac{\left | E_{\mathrm{rlx}}(\varphi_{n},\gamma_{n-1/2},\Upsilon_{n-1/2})-E_{\mathrm{rlx}}(\varphi_0,\gamma_{-1/2},\Upsilon_{-1/2})\right |}{E_{\mathrm{rlx}}(\varphi_0,\gamma_{-1/2},\Upsilon_{-1/2})}\label{eq:specialGuillaume}
\end{equation}
is presented on Figure \ref{fig:evol_ener_nloc}. The energy is clearly very well preserved.
\begin{figure}[h!]
  \centering
  \begin{tabular}{cc}
    \includegraphics[width=.48\textwidth]{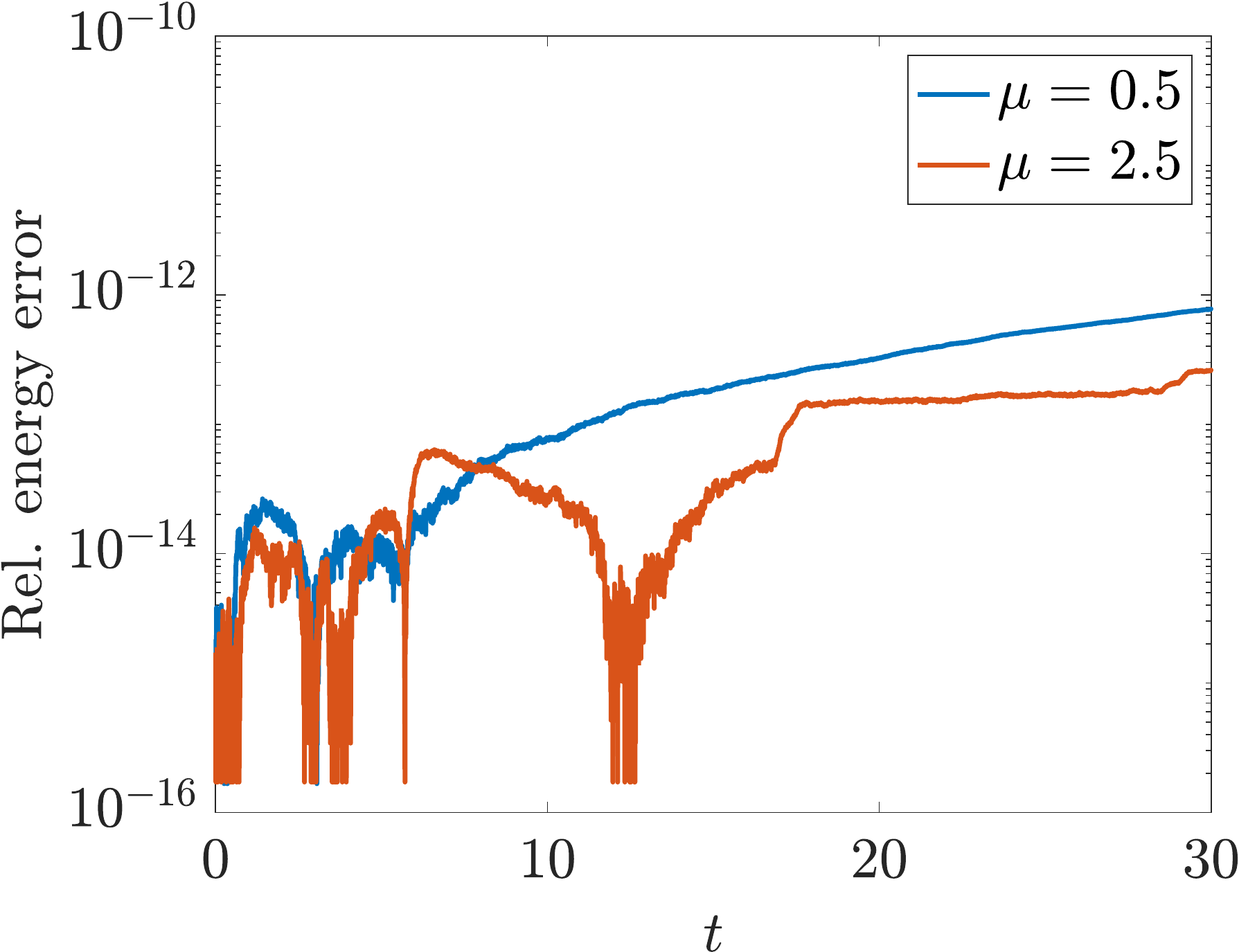}& \includegraphics[width=.48\textwidth]{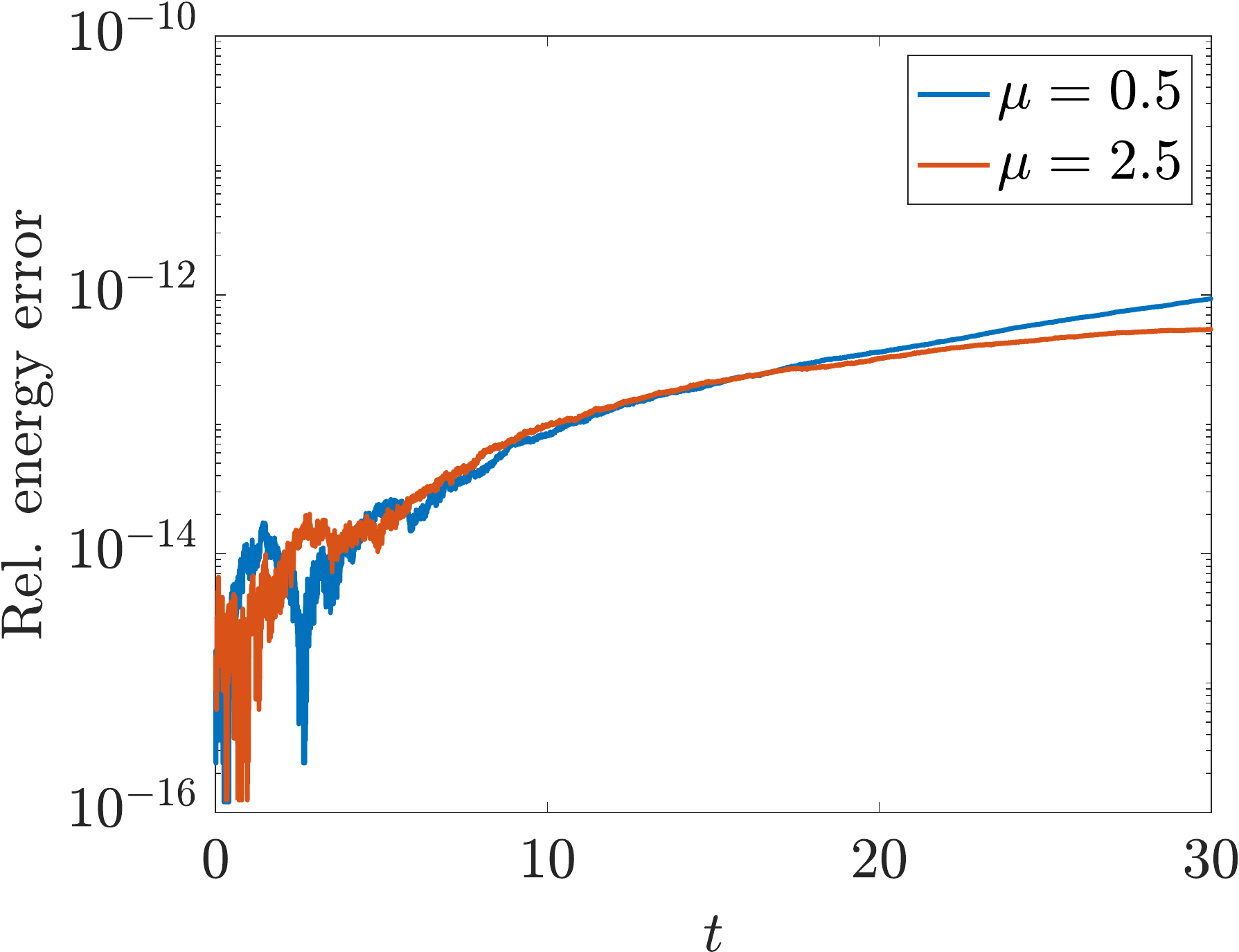} \\
    Nonlinear parameter $\alpha_2=-0.5$ & Nonlinear parameter $\alpha=0.1$\\
  \end{tabular}
  \caption{Evolution of the relative energy error \eqref{eq:specialGuillaume} for the generalized relaxation method \eqref{eq:grl_nloc} applied to equation \eqref{eq:cqnnls}}
  \label{fig:evol_ener_nloc}
\end{figure}

Concerning the two dimensionsal case, we consider the kernel $U_2$, $\alpha_1=1$ and $\alpha_2=-0.02$. The initial datum is chosen as a vortex beam with angular momentum
\[
\varphi_{\mathrm{in}}(x_1,x_2)=A r^m e^{-r^2/2}e^{im\phi},
\]
where $r=\sqrt{x_1^2+x_2^2}$, $m=1$ is the topological charge, $\phi$ is such that $x_1+ix_2=r\exp(i\phi)$ and $A=5.8$ is the amplitude. The computational domain is $[-8,8]^2$ and we use $256$ Fourier modes. The final time of simulation is $T=10$ and the time step is $5\cdot 10^{-3}$. The width of the Gaussian kernel $U_2$ is determined by $\mu=0.4$. Once again, the energy is very well preserved, since the relative error energy is bounded by $10^{-11}$ at the end of the simulation (see Figure \ref{fig:evol_ener_nonloc_2D}). We present in Figure \ref{fig:nonloc2D} the evolution of the solution with respect to time. The 3D representation of the time evolution is presented in figure \ref{fig:figA}. A 2D projection of it is displayed in Fig. \ref{fig:figB} and the final solution in Fig. \ref{fig:figC}. It is interesting to note that the same kind of breathing behaviour observed in the one dimensional setting is also present in 2D experiment.
\begin{figure}[h!]
  \centering
    \includegraphics[width=.48\textwidth]{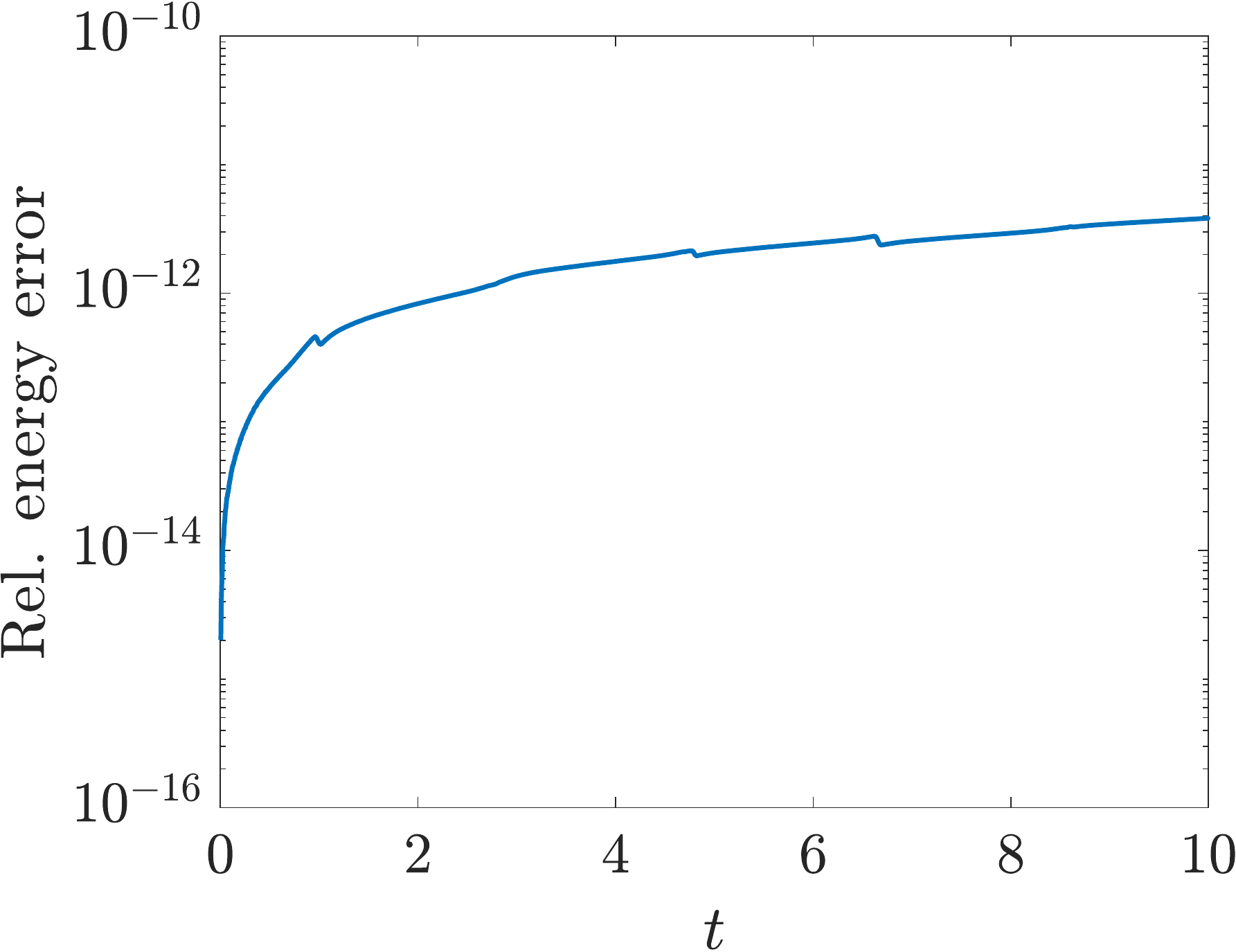}
  \caption{Evolution of the relative energy error \eqref{eq:specialGuillaume} for the generalized relaxation method \eqref{eq:grl_nloc} applied to equation \eqref{eq:cqnnls} for the two-dimensional case}
  \label{fig:evol_ener_nonloc_2D}
\end{figure}

\begin{figure}
\centering
\sbox{\measurebox}{%
  \begin{minipage}[b]{.6\textwidth}
  \subfloat
    [Evolution of $|\varphi(t,x_1,x_2)|^2$]
    {\label{fig:figA}\includegraphics[width=\textwidth]{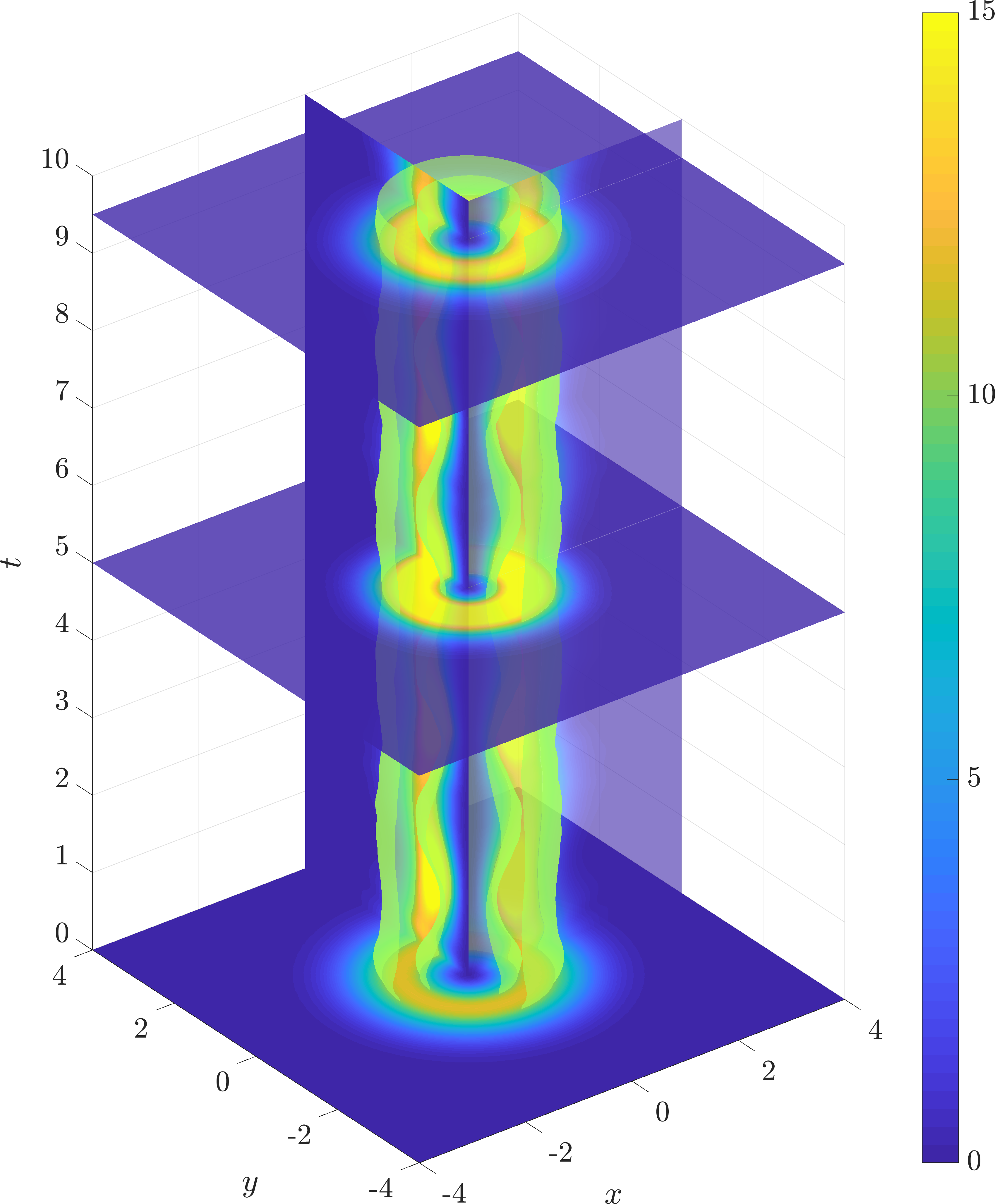}}
  \end{minipage}}
\usebox{\measurebox}\qquad
\begin{minipage}[b][\ht\measurebox][s]{.33\textwidth}
\centering
\subfloat
  [Evolution of $|\varphi(t,x_1,x_2)|^2$, slice at $x_2=0$]
  {\label{fig:figB}\includegraphics[width=\textwidth]{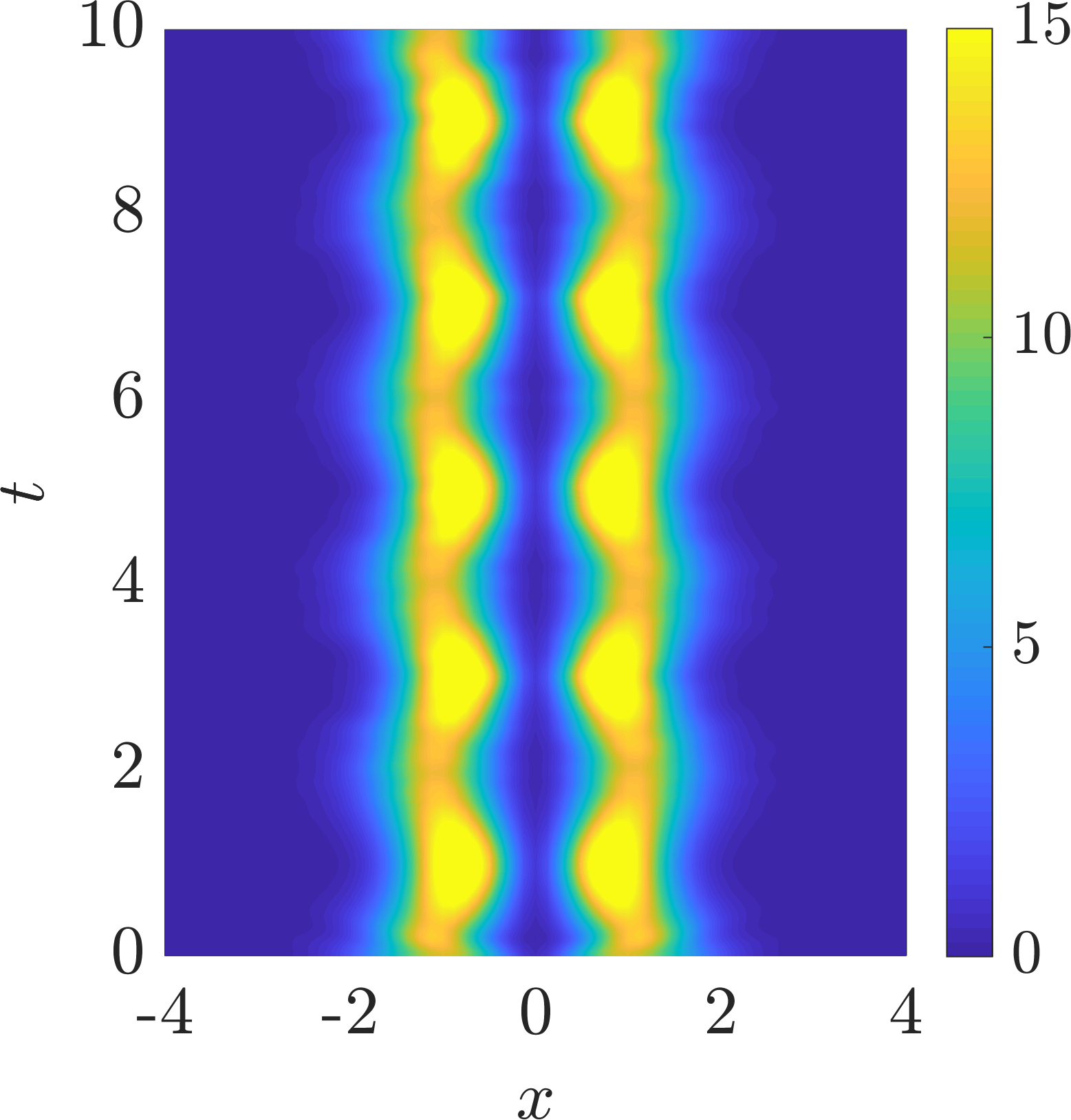}}

\vfill

\subfloat
  [Final state]
  {\label{fig:figC}\includegraphics[width=\textwidth]{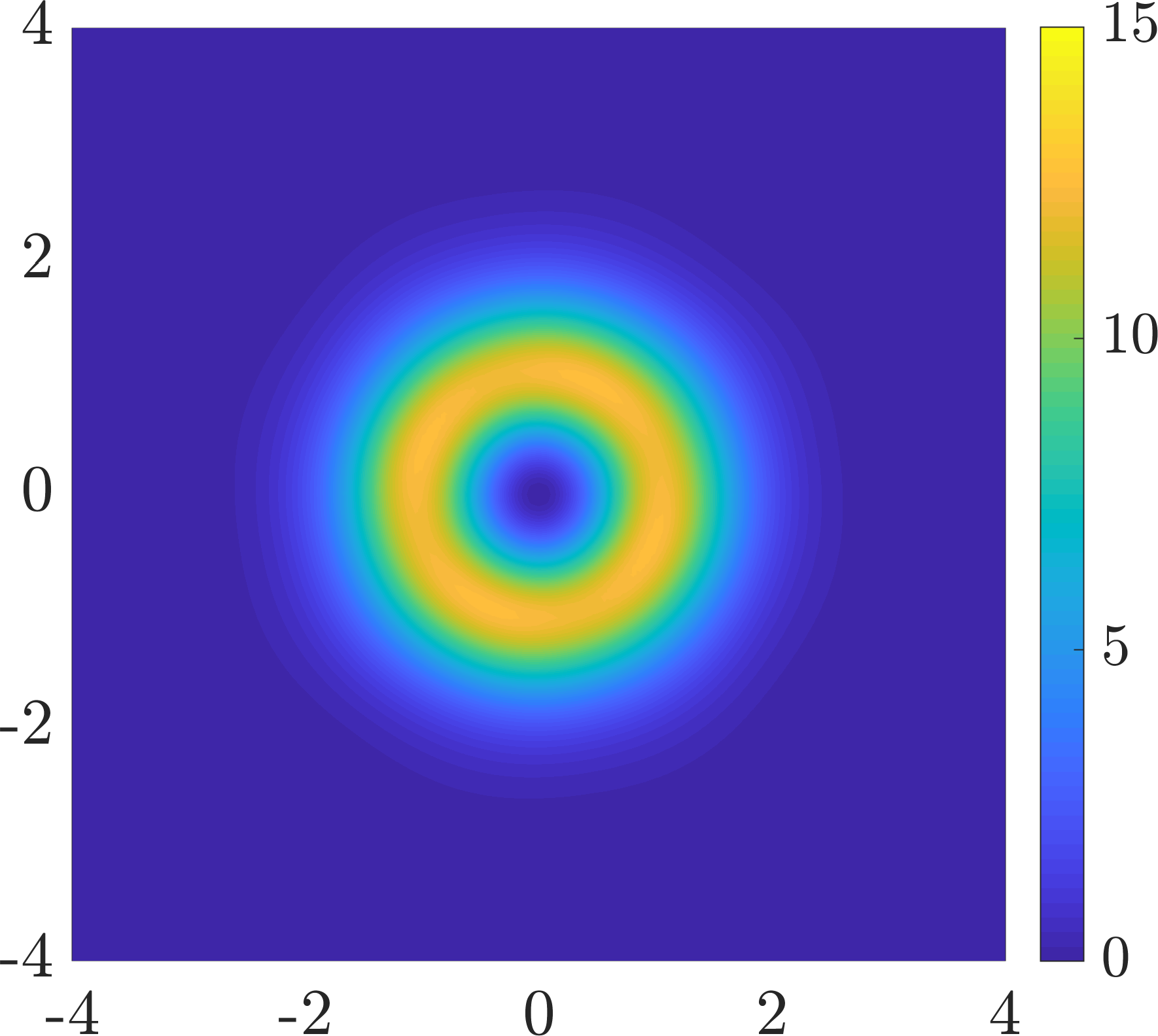}}
\end{minipage}
\caption{Evolution of $|\varphi(t,x_1,x_2)|^2$ for the generalized relaxation method \eqref{eq:grl_nloc} applied to equation \eqref{eq:cqnnls} for the two-dimensional case}
\label{fig:nonloc2D}
\end{figure}


\subsection{Two dimensional rotating dipolar Bose-Einstein condensate}
In this final subsection, we present results of simulations with the relaxation scheme for a rotating Bose-Einstein condensate subject to long range dipole-dipole interaction (DDI). The model is (see Eq. (8.15)-(8.17) in \cite{baocai})
\begin{equation}
  \label{eq:dipdip}
\left \{
  \begin{array}{l}
    \displaystyle
    i\partial_t \varphi (t,x) = \left(-\frac{1}{2}\Delta
    + V(x)
    + \beta |\varphi|^{2}(t,x)
    + \lambda \psi(t,x)
    -\Omega  L_{x_3} \right)
    \varphi(t,x), \quad x\in \R^2,\quad t>0,\\
    \displaystyle \psi(t,x)=-\frac{3}{2} (\partial_{\mathbf{n}_\perp \mathbf{n}_\perp}-n_3^2\Delta)\left ( \frac{1}{2\pi|x|} \ast |\varphi|^2 \right )(t,x), \quad x\in \R^2,\quad t>0,\\ 
    \varphi(0,x)=\varphi_0(x), \quad x\in \R^2,
  \end{array}
  \right .
\end{equation}
where $L_{x_3}=-i(x_1\partial_{x_2}-x_2 \partial_{x_1})$ is the $x_3$-component of the angular momentum and $\Omega$ represents the rotating frequency. The nonlinear parameters $\beta$ and $\lambda$ respectively describe the strength of the short-range two-body interactions
in the condensate and the strength of the dipolar interaction modeled with a Coulomb potential. The real-valued external trapping potential $V$ is chosen as $V(x)= (\gamma_{x_1}^2{x_1}^2+\gamma_{x_2}^2 {x_2}^2)/2$, where $\gamma_{x_1}>0$, $\gamma_{x_2}>0$ are dimensionless constants proportional to the trapping
frequencies in the both directions. The normal $\mathbf{n}=(n_1,n_2,n_3)^T$ represents the dipole axis and we define $\mathbf{n}_\perp=(n_1,n_2)^T$ and $\partial_{\mathbf{n}_\perp}=\mathbf{n}_\perp \cdot \nabla$. The energy is given by
\begin{equation}
  \mathcal{E}(\varphi)=\frac12\displaystyle  \int_{\R^2} \left ( \frac{1}{2} |\nabla \varphi|^2+V(x)|\psi|^2+\frac{\beta}{2} |\varphi|^4+\frac{\lambda}{2} \psi|\varphi|^2-\Omega \overline{\varphi}L_z\varphi \right ) dx.
  \label{eq:ener_dipdip}
\end{equation}
The initial datum is computed as a minizer of the energy on the intersection of the unit sphere of $L^2$ with the energy space
\[
\varphi_{\mathrm{in}}=\mathrm{arg}\min_{\|\phi\|^2=1}\mathcal{E}(\phi).
\]
We use a preconditioned nonlinear conjugate gradient method developed in \cite{AnLeTan2017}. Contrary to the previous subsection where the convolution kernel was regular, the computation of the nonlocal term $\psi$ with the Coulomb potential is known to be a costly task. We have chosen to apply the technique developed in \cite{ViGrFe2016}, which allows fast convolution using truncated Green's functions. The parameters of the simulation are $\Omega=0.97$, $\gamma_{x_1}=\gamma_{x_2}=1$, $\lambda=175$, $\beta=(250-\lambda)\sqrt{5/\pi}$. The computational domain is $(-16,16)^2$ with $2^8=256$ Fourier modes in each direction, and the computational time is set to $T=15$ with the time step $\delta t=10^{-3}$. Initially, the dipole axis is $\mathbf{n}=(1,0,0)^T$ and we change it to $\mathbf{n}=(\cos(\pi/3),\sin(\pi/3),0)^t$.
We plot on Figure \ref{fig:evol_dipdip} the evolution of the solution from time $t=0$ to time $t=20$. We add the direction of the dipole axis on each frame. The axis are removed to make the presentation clearer. As it can be seen on Figure \ref{fig:evol_ener_dipdip}, the energy is very well preserved as expected.
\begin{figure}[h!]
  \centering
  \begin{tabular}{ccc}
    \includegraphics[width=.3\textwidth]{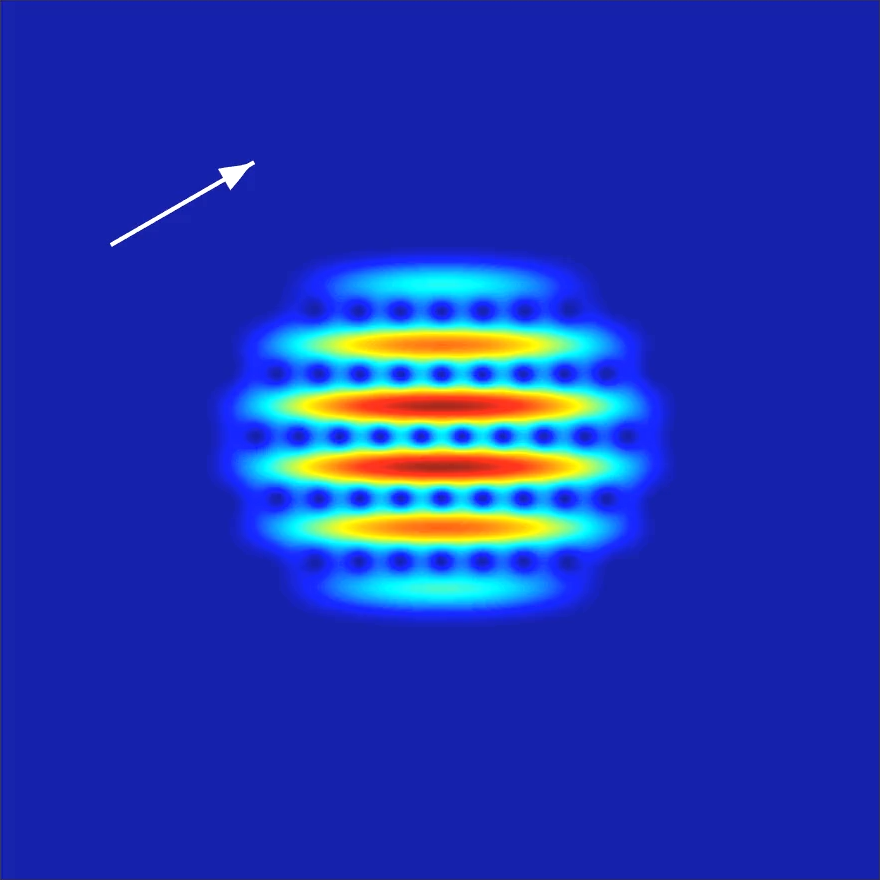}& \includegraphics[width=.3\textwidth]{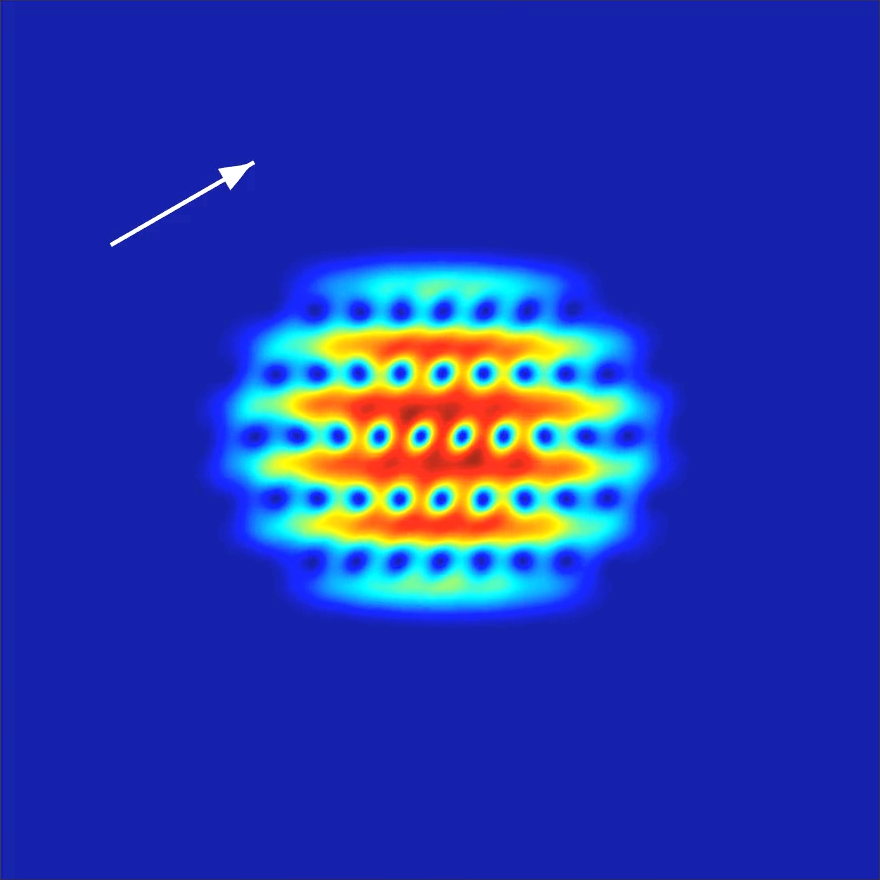} & \includegraphics[width=.3\textwidth]{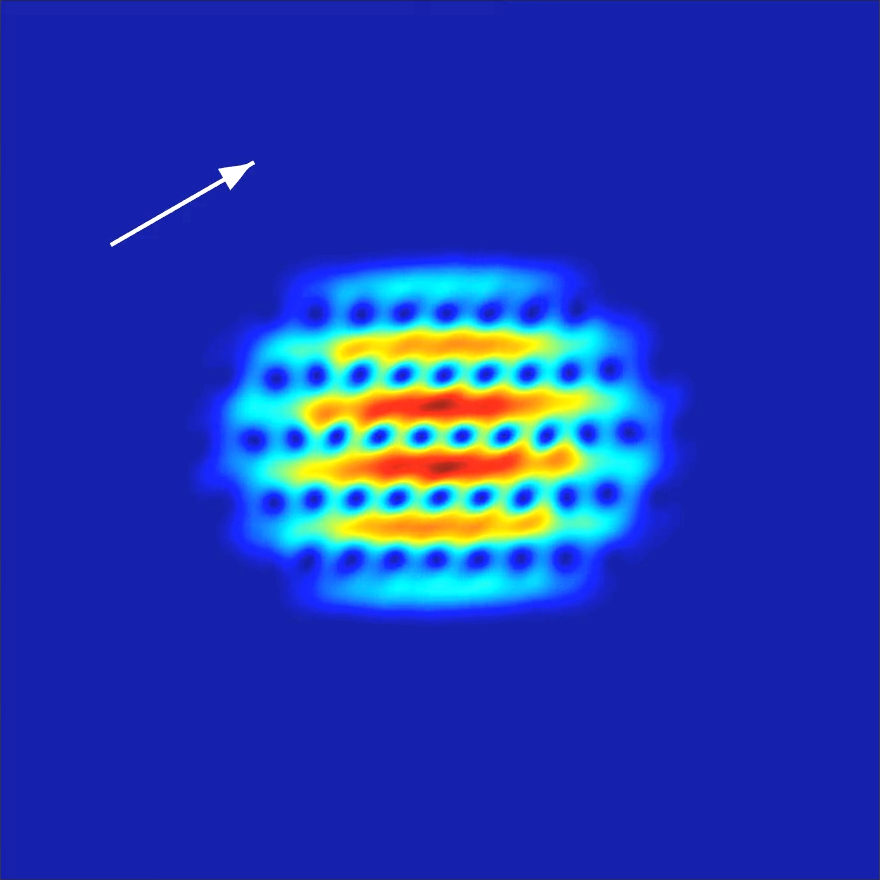} \\
    $t=0$ & $t=2.5$ & $t=5.0$\\[2mm]
    \includegraphics[width=.3\textwidth]{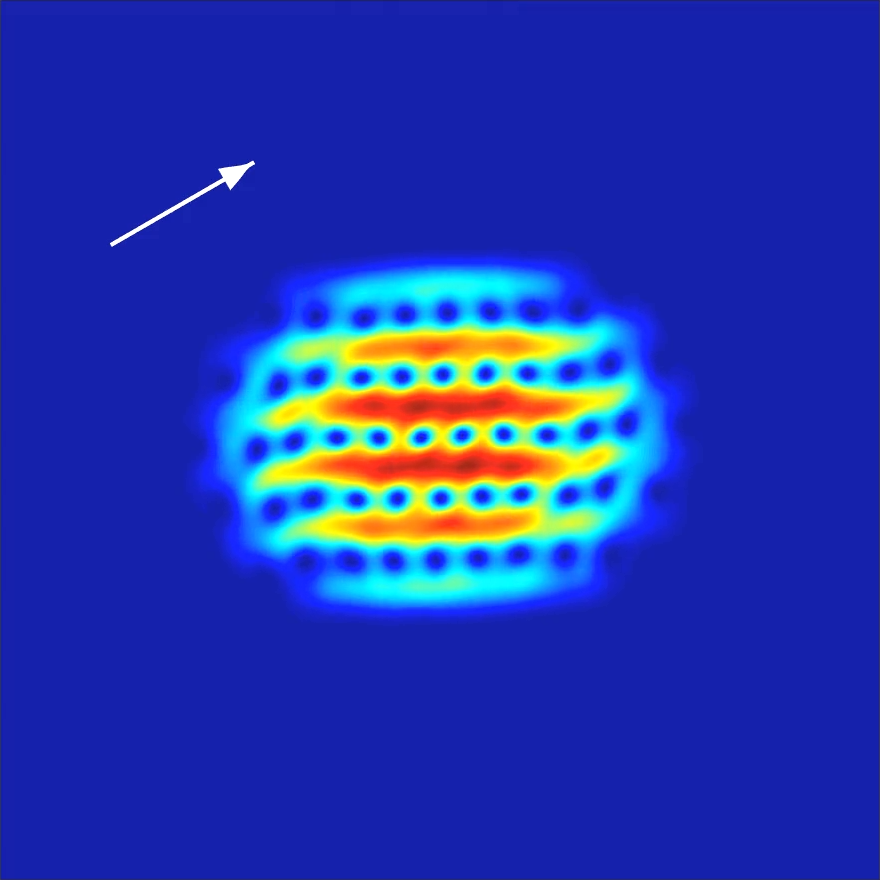}& \includegraphics[width=.3\textwidth]{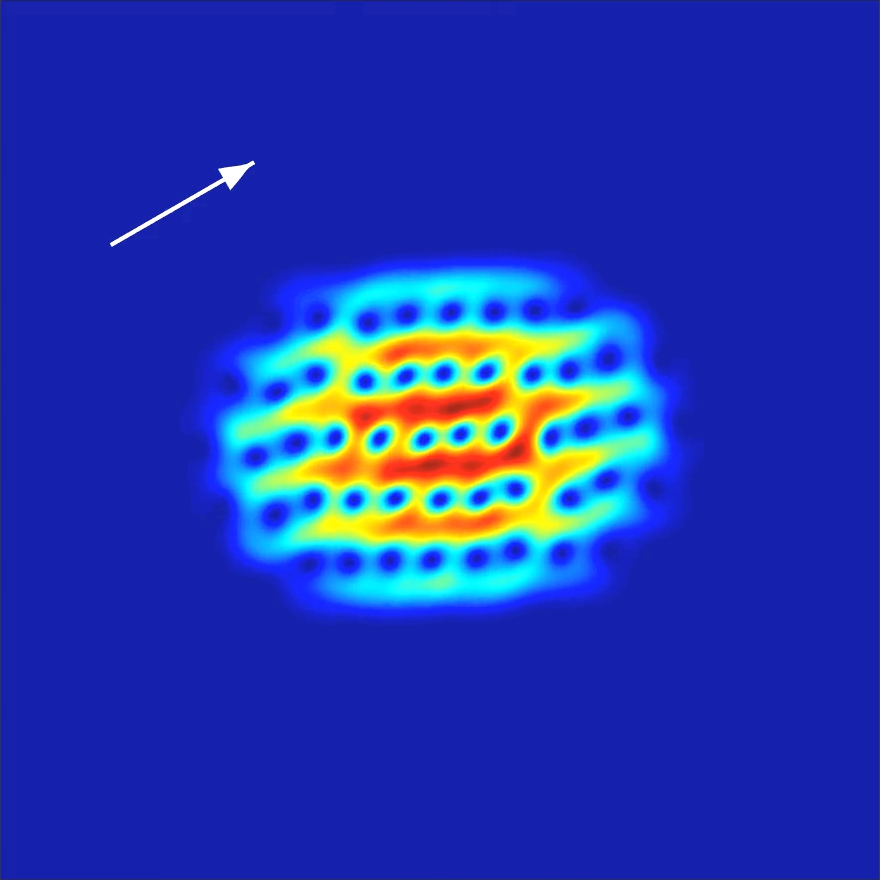} & \includegraphics[width=.3\textwidth]{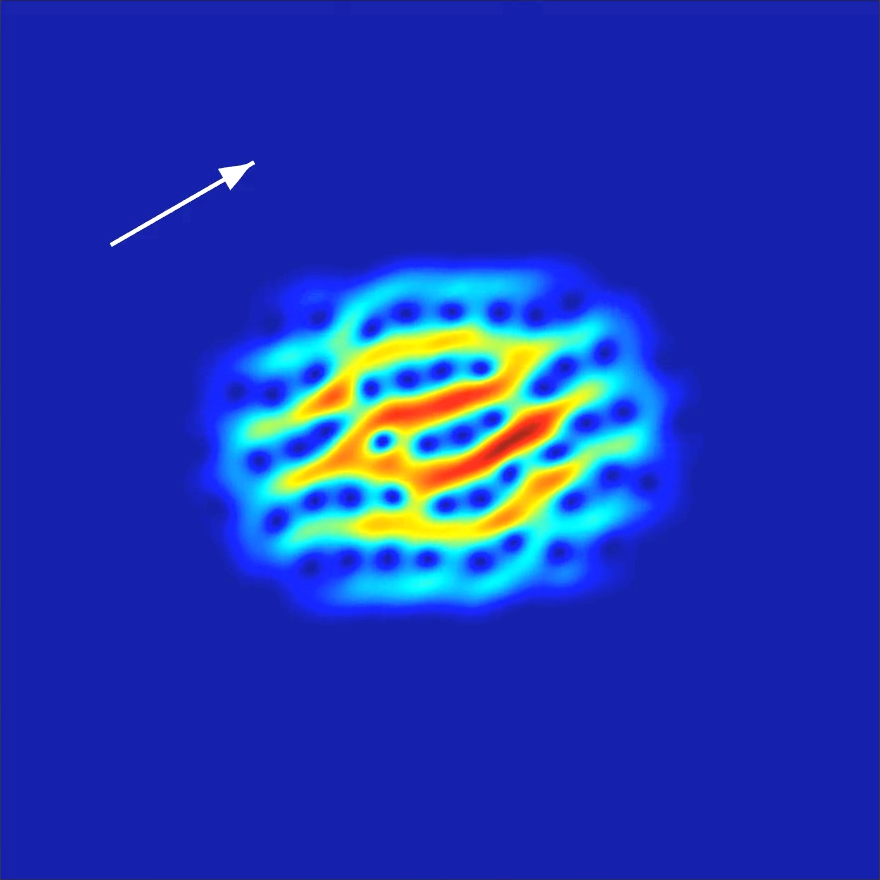} \\
    $t=7.5$ & $t=10$ & $t=12.5$\\[2mm]
    \includegraphics[width=.3\textwidth]{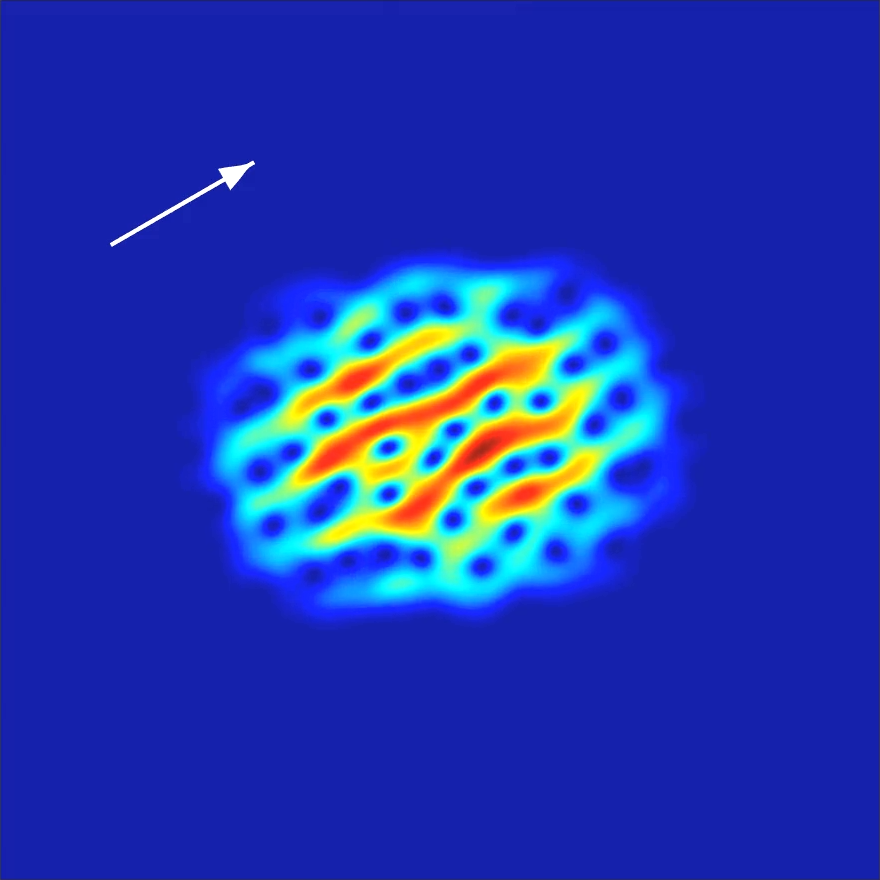}& \includegraphics[width=.3\textwidth]{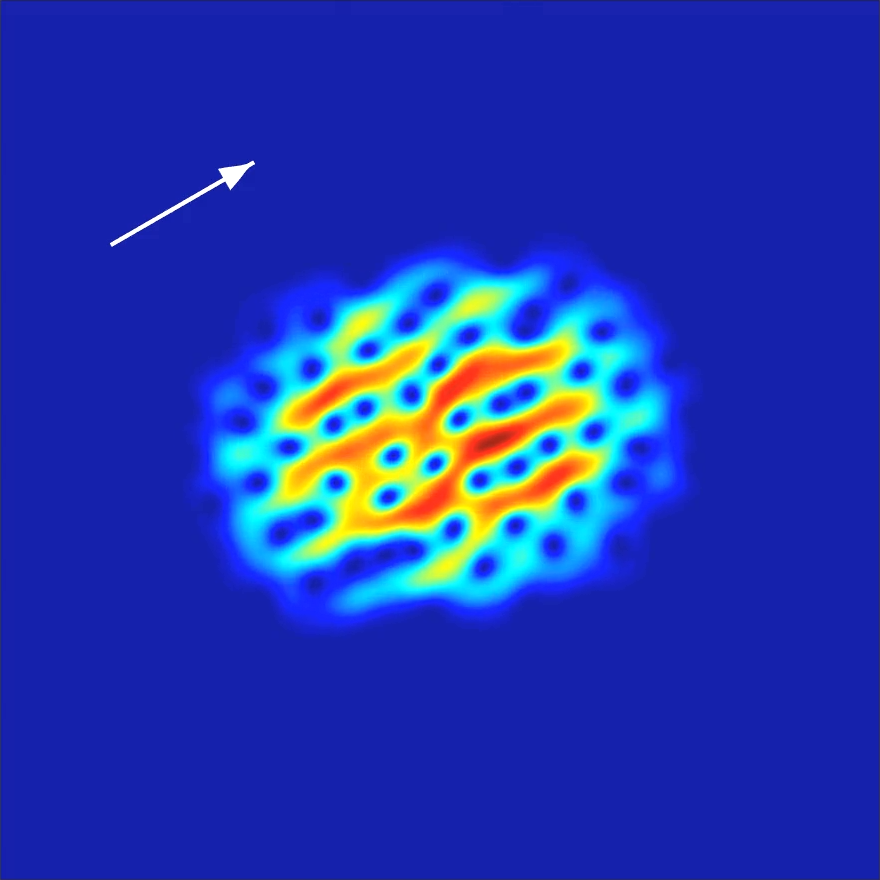} & \includegraphics[width=.3\textwidth]{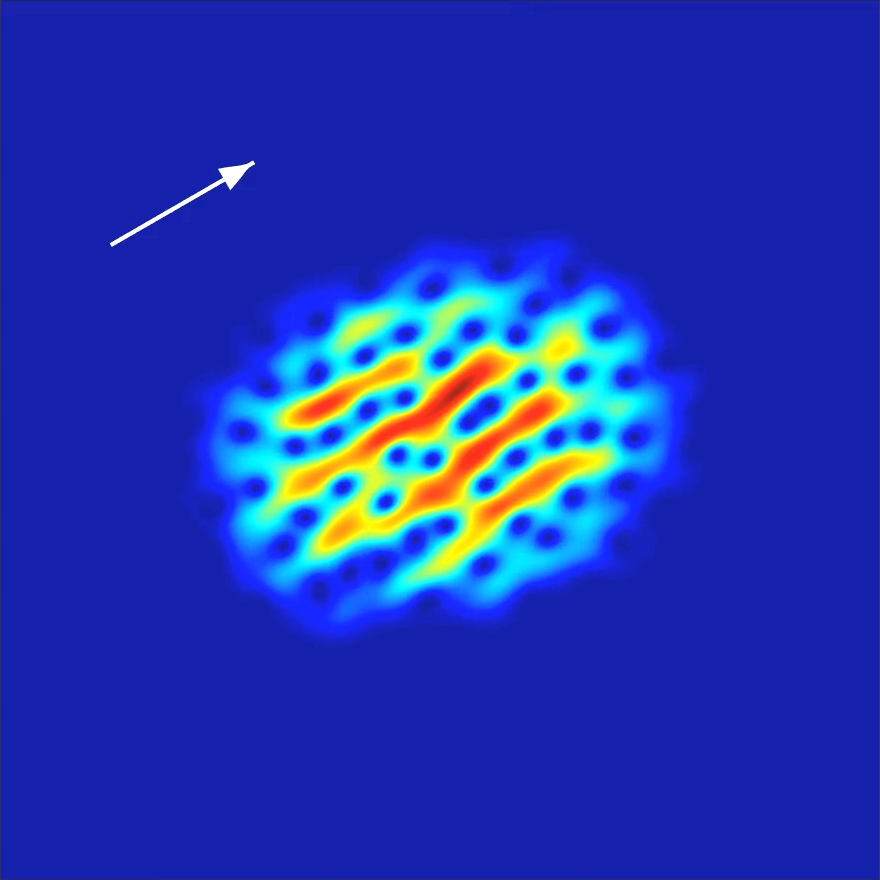} \\
    $t=15$ & $t=17.5$ & $t=20$\\
  \end{tabular}
  \caption{Evolution of the solution $|\varphi(t,x)|^2$ of Eq. \eqref{eq:dipdip} modeling rotating dipolar Bose-Einstein condensate}
  \label{fig:evol_dipdip}  
\end{figure}

\begin{figure}[h!]
  \centering
  \includegraphics[width=.48\textwidth]{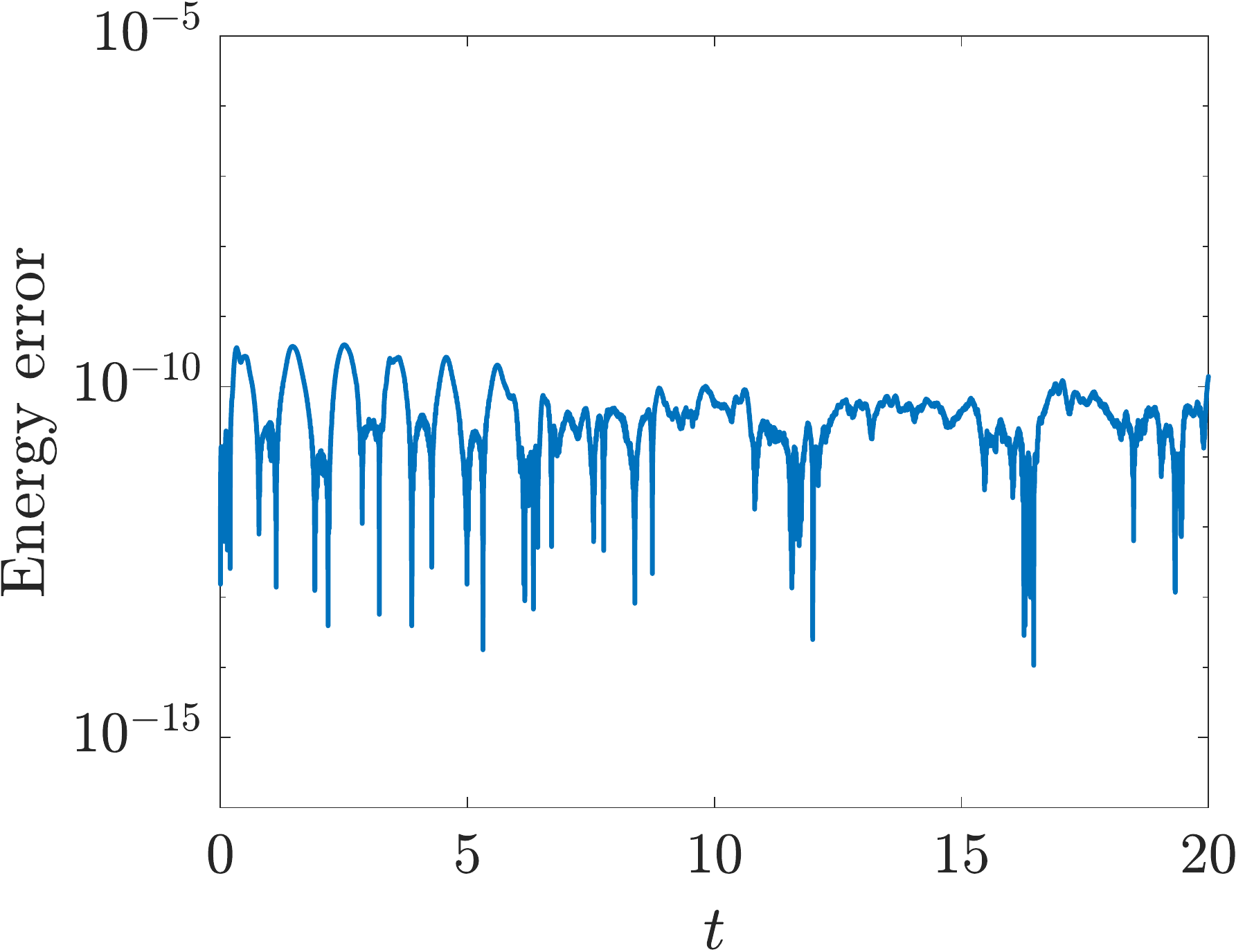}
  \caption{Evolution of the energy error with respect to time for the relaxation method \eqref{Frelaxation} applied to equation \eqref{eq:dipdip}}
  \label{fig:evol_ener_dipdip}
\end{figure}

\section{Conclusions and future works}
In this paper we have given for the first time a proof of the second order of
the relaxation method introduce in \cite{BessePhD} for the cubic nonlinear
Schr\"odinger equation.
We also have extended the previous method to deal with general power law
nonlinearites showing that this method is still an energy preserving method.
Note that, since Crank-Nicolson methods are one-step methods, one may
use composition techniques to achieve higher orders (4,6,8, {\it etc}) while
preserving both an energy and the $L^2$-norm,
and this is not possible so straightforwardly for relaxation methods.
Therefore, in future works we want to focus on energy preserving relaxation
methods which have higher order.

\appendix
\section{Notation}\label{sec:notation}

In this section, the symbol $\mathcal F$ will denote different operators
depending on whether $x$ belongs to $\R^d$ or $x$ belongs to $\T_\delta^d$.
When $x\in\R^d$, $\mathcal F$ denotes the Fourier transform
$\R^d$ defined for $\varphi\in L^1(\R^d)$ via the formula
\begin{equation*}
  {\mathcal F}(\varphi)(\xi) = \frac{1}{(2\pi)^{d/2}}
  \int_{\R^d} \varphi(x) {\rm e}^{-i x.\xi} \dd x,
\qquad \xi\in\R^d,
\end{equation*}
and its inverse ${\mathcal F}^{-1}$ is defined for $\phi \in L^1(\R^d)$
through the formula
\begin{equation*}
  {\mathcal F}^{-1}(\phi)(x) = \frac{1}{(2\pi)^{d/2}}
  \int_{\R^d} \phi(\xi) {\rm e}^{+i x.\xi} \dd \xi,
\qquad x\in\R^d.
\end{equation*}
When $x\in\T_\delta^d=(\R/(\delta\Z))^d$, the
$\mathcal F$ denotes the Fourier transform
defined for $\varphi\in L^1(\T_\delta^d)$ via the formula
\begin{equation*}
  {\mathcal F}(\varphi)(\xi) = \frac{1}{\delta^{d}}
  \int_{\T_\delta^d} \varphi(x) {\rm e}^{-i x.\xi} \dd x,
\qquad \xi\in\frac{2\pi}{\delta}\Z^d,
\end{equation*}
and its inverse ${\mathcal F}^{-1}$ is defined for
$\phi \in \ell^1(2\pi\delta^{-1}\Z^d)$
through the formula
\begin{equation*}
  {\mathcal F}^{-1}(\phi)(x) = \sum_{k\in\frac{2\pi}{d}\Z^d}
  \phi(k) {\rm e}^{+i x.k},
\qquad x\in\T_\delta^d.
\end{equation*}

\bibliographystyle{plain}
\bibliography{labib}

\end{document}